\newtheorem{theorem}{Theorem}[section]
\newtheorem{cor}[theorem]{Corollary}
\newtheorem{lemma}[theorem]{Lemma}
\newtheorem{proposition}[theorem]{Proposition}
\theoremstyle{definition}
\theoremstyle{remark}
\numberwithin{equation}{section}
\begin{document}

\title{Julia sets of random exponential maps}

\author{Krzysztof Lech}
\address{University of Warsaw, Faculty of Mathematics, Informatics and Mechanics}
\email{K.Lech@mimuw.edu.pl}
\thanks{Research  supported in part by the NCN grant 2014/13/B/ST1/04551}

\subjclass[2010]{37F10}

\keywords{complex dynamics, random dynamics, Julia set, exponential map}

\begin{abstract}
For a sequence $(\lambda_n)$ of positive real numbers we consider the exponential functions $f_{\lambda_n} (z) = \lambda_n e^z$ and the compositions $F_n = f_{\lambda_n} \circ f_{\lambda_{n-1}} \circ ... \circ f_{\lambda_1}$. For such a non-autonomous family we can define the Fatou and Julia sets analogously to the usual case of autonomous iteration. The aim of this document is to study how the Julia set depends on the sequence $(\lambda_n)$. Among other results, we prove the Julia set for a random sequence $\{\lambda_n \}$, chosen uniformly from a neighbourhood of $\frac{1}{e}$, is the whole plane with probability $1$. We also prove the Julia set for $\frac{1}{e} + \frac{1}{n^p}$ is the whole plane for $p < \frac{1}{2}$, and give an example of a sequence $\{\lambda_n \} $ for which the iterates of $0$ converge to infinity starting from any index, but the Fatou set is non-empty.
\end{abstract}

\maketitle

\section{Introduction}
We consider a sequence $(\lambda_n)$ of positive real numbers, bounded from above by some constant. Let us denote $f_{\lambda_n} (z) = \lambda_n e^z$ and let $F_n$ be the sequence of iterates $F_n = f_{\lambda_n} \circ f_{\lambda_{n-1}} \circ ... \circ f_{\lambda_1}$. Then the Fatou set of a sequence $F(\lambda_n)$ is the set of all $z \in \mathbb{C}$ for which $F_n$ is normal on a neighbourhood of $z$, that is from every sequence one can extract an almost uniformly convergent subsequence. The Julia set $J(\lambda_n)$ is the complement of the Fatou set. These are natural extensions of the same definitions for iterates of a fixed function, rather than a non-autonomous sequence.

For an autonomous sequence of iterates of $\lambda e^z$, where the $\lambda$ is fixed, it is well known that the Julia set is the whole plane for $\lambda > \frac{1}{e}$. In particular the case of $\lambda = 1$ is a famous result by Misiurewicz (\cite{MM}). For $\lambda = \frac{1}{e}$ this is not the case, as there is a nonempty Fatou set, namely the parabolic basin of the fixed point $1$. It should not be surprising then that $\frac{1}{e}$ plays a special role in our considerations.

Some work on non-autonomous iteration of the exponential function has been done in \cite{MU}, \cite{MUZ}, \cite{ZU}, \cite{ZU2}, with \cite{ZU} being especially important to us for yielding some key corollaries. Considerable work has also been done in non-autonomous iteration of the quadratic family, in \cite{RB}, \cite{BB}, \cite{QIU} among other papers. These along with \cite{Com} and \cite{FS} were a useful inspiration, even though they do not explicitly cover non polynomial dynamics. Some common themes can be found, especially in the techniques used for building pathological examples in non-autonomous iteration in general, whether transcendental or rational. 

It is worth pointing out some substantial differences between autonomous and non-autonomous complex dynamics. Most importantly in our setting there is no reasonable notion of a periodic point (since the function changes along iteration). Because of that some theory from autonomous dynamics either does not apply, or has to be proven using other methods. A lot of groundwork for this has been done in the aforementioned papers.

This document aims to expand on one of the results from \cite{ZU}, which answers the question of what is the Julia set when we separate the sequence $\lambda_n$ from $\frac{1}{e}$. In the second section we recall this theorem, and for the reader's convenience the original authors' proof is available in the appendix. Presenting this proof in its entirety is necessary for pointing out some corollaries arising from it, which were not explicitly stated by the authors.
The third section contains various generalizations of the aforementioned result, including randomizing $\lambda_n$ uniformly from an interval. We also consider sequences convergent to $\frac{1}{e}$, by taking a close look at $\lambda_n = \frac{1}{e} + \frac{1}{n^p}$ for various $p$.

The author would like to thank Anna Zdunik for suggesting this topic, and providing useful conversations throughout work on this project.

\section{The Julia set for $M > \lambda_n > \bar{\lambda} > \frac{1}{e}$ and consequences}
The case for which the sequence $\lambda_n$ is separated from $\frac{1}{e}$ has been covered by Urbański and Zdunik in \cite{ZU}. In particular they have proved the following theorem.

\begin{theorem}\label{zdunik}
Let $\forall_n M > \lambda_n > \bar{\lambda} > \frac{1}{e}$ for some fixed constants $M , \bar{\lambda}$. Then $J(\lambda_n) = \mathbb{C}$. 
\end{theorem}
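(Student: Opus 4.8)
\noindent\emph{Proof strategy.}
The aim is to show that $\{F_n\}$ is not a normal family on any non-empty open disk; since restrictions of normal families are normal, this is precisely the assertion $J(\lambda_n)=\mathbb{C}$. Two preliminary remarks. First, $f_\lambda'=f_\lambda$, so $F_n'(z)=\prod_{k=1}^{n}F_k(z)$; in particular $F_n$ is everywhere a local biholomorphism, whence $J(\lambda_n)=F_n^{-1}(J_n)$, where $J_n$ is the Julia set of the shifted sequence $(\lambda_{n+1},\lambda_{n+2},\dots)$. Second, the real line is forward invariant and, because $\bar\lambda>1/e$, the maps push real points to the right with a uniform gap: $f_\lambda(x)-x\ge\bar\lambda e^{x}-x\ge 1+\ln\bar\lambda>0$ for all $x\in\mathbb{R}$. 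This is where the hypothesis $\bar\lambda>1/e$ is essential — the gap degenerates at $\bar\lambda=1/e$, the parabolic case — and it gives $F_n(x)\to+\infty$ for every $x\in\mathbb{R}$, in particular along the orbit of the omitted value $0$.

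The central step is to show that every real point lies in $J(\lambda_n)$ (and, by the identical argument, in $J_n$ for every $n$). Fix $x_0\in\mathbb{R}$ and a disk $\Delta\ni x_0$, and choose a small disk $D=D(x_0,r)\subseteq\Delta$. While $F_n$ is univalent on $D$ the Koebe $\tfrac14$-theorem gives $F_n(D)\supseteq D(F_n(x_0),\tfrac14 r\,|F_n'(x_0)|)$; since $|F_n'(x_0)|/|F_n(x_0)|=\prod_{k<n}F_k(x_0)\to\infty$, for $n$ large this disk would contain $0$, which is impossible as $0\notin F_n(\mathbb{C})$. Hence $F_n$ cannot stay univalent on $D$: there is a first index $m$ for which the set $F_m(D)$ has imaginary extent (that is, $\sup\operatorname{Im}-\inf\operatorname{Im}$ over $F_m(D)$) at least $2\pi$, and by choosing $r$ small one may force $m$ to be large, so that $F_m(x_0)$ lies far out on the escaping real orbit. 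By the Koebe distortion theorem $F_m(D)$ is, on a slightly smaller disk, a roughly round region around $F_m(x_0)$, so it contains a nearly vertical arc of imaginary extent $\ge 2\pi$ at real part $\ge\tfrac12 F_m(x_0)$; applying $f_{\lambda_{m+1}}$ turns this arc into a curve winding once around $0$ at modulus $\ge\bar\lambda\,e^{\,F_m(x_0)/2}=:\rho$, a large number.

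This situation then reproduces itself: if $\gamma\subseteq F_n(D)$ winds once around $0$ with $|w|\ge\rho_n$ on $\gamma$, then the argument sweeps a full turn along $\gamma$, so $\gamma$ contains a sub-arc on which $\operatorname{Re}w\ge\rho_n/2$ while $\operatorname{Im}w$ changes by at least $\rho_n\sqrt{3}\ge 2\pi$ (using $\rho_n$ large); consequently $f_{\lambda_{n+1}}(\gamma)$ contains a curve winding once around $0$ with $|w|\ge\bar\lambda e^{\rho_n/2}=:\rho_{n+1}\to\infty$. Moreover $\gamma$ meets the negative real axis, so $f_{\lambda_{n+1}}$ maps some point of $\gamma$ to one of modulus $\le M e^{-\rho_n}$. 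Therefore, for all large $n$, the set $F_n(D)$ simultaneously contains a point of modulus $\le M e^{-\rho_{n-1}}\to 0$ and a point of modulus $\ge\rho_n\to\infty$, and all these points have preimages in the compact set $\overline{D}\subset\Delta$. If $\{F_n\}$ were normal on $\Delta$, a locally uniformly convergent subsequence $F_{n_j}\to\phi$, restricted to $\overline{D}$ and evaluated along convergent subsequences of the chosen preimages, would force $\phi$ to vanish at some point of $\overline{D}$ — so $\phi$ is not the constant $\infty$, hence a finite holomorphic function — and yet to be unbounded at another point, a contradiction. Thus $x_0\in J(\lambda_n)$, so $\mathbb{R}\subseteq J(\lambda_n)$ and likewise $\mathbb{R}\subseteq J_n$ for all $n$.

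Finally one must deduce $J(\lambda_n)=\mathbb{C}$. Since $J(\lambda_n)$ is closed and $J(\lambda_n)=F_n^{-1}(J_n)\supseteq F_n^{-1}(\mathbb{R})$ for every $n$, it suffices that $\bigcup_n F_n^{-1}(\mathbb{R})$ be dense, equivalently that every disk $D$ satisfy $F_n(D)\cap\mathbb{R}\ne\emptyset$ for some $n$. One splits again according to whether $F_n(D)$ eventually attains imaginary extent $\ge 2\pi$ — in which case the next image winds around $0$ and so meets $\mathbb{R}$ — or $F_n$ remains univalent on $D$ for all $n$, and one must rule out, crucially using $\bar\lambda>1/e$, that the orbit of $D$ could stay bounded, or cling asymptotically to a horizontal line, without ever meeting $\mathbb{R}$. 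This last point is the step I expect to be the main obstacle: it is the non-autonomous analogue of the fact that $\lambda e^z$ has no Fatou component at all when $\lambda>1/e$, and, there being no periodic point to invoke, it has to be settled by a direct study of the real trace of the orbit and of its excursions into the left half-plane. That analysis, together with the Koebe-type distortion bookkeeping used above, is the technical heart of the proof.
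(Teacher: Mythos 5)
Your reduction of the first half (showing $\mathbb{R}\subset J(\lambda_n)$) has an unjustified step exactly where you invoke Koebe distortion. Knowing that $F_m$ is univalent on $D$ and that $F_m(D)$ has imaginary extent $\geq 2\pi$ gives no lower bound on $r\,|F_m'(x_0)|$: a univalent image of a disk can have arbitrarily large diameter while the derivative at the centre stays small (Koebe function), so the ``roughly round region around $F_m(x_0)$'' may have inradius far smaller than $2\pi$, and the $2\pi$ of vertical extent may be realized by a thin tentacle whose real parts you do not control at all (they could be much smaller than $\tfrac12 F_m(x_0)$, even negative). Consequently the seed of your induction, the bound $|w|\geq\bar\lambda e^{F_m(x_0)/2}$ along a curve of argument variation $2\pi$, is not established, and with it the ``simultaneously tiny and huge modulus'' contradiction. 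This part is repairable, and the paper does it differently: normality plus escape of the real orbit forces $F_n\to\infty$ locally uniformly near a real point, hence $|F_n'|\to\infty$; Bloch's theorem then places a ball of radius $2\pi$ inside $F_n(B)$, which must meet a line $\Im z\in\pi+2\pi\mathbb{Z}$, so the next iterate is a negative real number (and the one after is close to $0$), contradicting uniform escape.

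The decisive gap, however, is the one you yourself flag in the last paragraph. You reduce $J(\lambda_n)=\mathbb{C}$ to the density of $\bigcup_n F_n^{-1}(\mathbb{R})$ and then leave open precisely the hard case: a disk whose images never acquire imaginary extent $2\pi$ and never meet the lines $\mathbb{R}+2\pi i\mathbb{Z}$, i.e.\ a putative Fatou component whose orbit stays trapped in $S^{+}=\{0<\Im z<\pi\}$. This is not a routine leftover; it is where the hypothesis $\bar\lambda>\frac1e$ actually does its work, and it occupies essentially the whole of the paper's proof: Misiurewicz's inequality $|F_n'(z)|\geq|\Im F_n(z)|$ together with Bloch forces the orbit of a Fatou disk into the strip $S$; a Baire category argument confines it to $\{0<\Im z<\tfrac{\pi}{2}\}$; Schwarz--Pick contraction of the inverse branches $g_\lambda\colon S^{+}\to S^{+}$ in the hyperbolic metric excludes non-constant limits, expansion excludes constant limits in $S^{+}$, Bloch excludes escape to $\infty$ inside the strip; and the only remaining possibility, a constant limit on $\mathbb{R}$, is killed by the uniform push $\Re f_\lambda(z)>\Re z+\bar\lambda e(1-\delta)$ on a thin horizontal strip, which is exactly where $\bar\lambda>\frac1e$ enters. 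None of this (or any substitute for it) appears in your proposal, so as written it proves at most $\mathbb{R}\subset J(\lambda_n)$ modulo the first gap, not the theorem.
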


The following propositions can both be seen as corollaries of the proof of the above theorem.

\begin{proposition}\label{cora}
Let $\forall_n M > \lambda_n > \bar{\lambda} > 0$ for some constants $\bar{\lambda}, M > 0$ and let $\forall_{x \in \mathbb{R}} \lim\limits_{n \rightarrow \infty} F_n (x) = \infty$. If there is no open set $U$ such that there exists a subsequence of $F_{n}$ that converges on $U$ almost uniformly to a constant $a \in \mathbb{R}$, then $J(\lambda_n) = \mathbb{C}$.
\end{proposition}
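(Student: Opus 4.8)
The natural approach is to follow the proof of Theorem~\ref{zdunik} given in the appendix and to track where the hypothesis $\bar\lambda>\frac1e$ is actually used: it enters in only two ways, and both are supplied directly by the hypotheses of Proposition~\ref{cora}, so the remainder of the argument carries over with $\bar\lambda>\frac1e$ weakened to $\bar\lambda>0$.

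That proof runs, in outline, as follows. The lower bound $\bar\lambda>\frac1e$ is used first to guarantee that $\lambda e^x-x\ge\delta>0$ uniformly in $x\in\mathbb R$ and in the admissible $\lambda$, whence $F_n(x)\to\infty$ for every $x\in\mathbb R$; this conclusion is precisely the second hypothesis of the proposition, so we simply assume it. One then argues by contradiction: if $J(\lambda_n)\ne\mathbb C$, the Fatou set is nonempty, and since $F_n(z+2\pi i)=F_n(z)$ and $F_n(\bar z)=\overline{F_n(z)}$ for $n\ge1$ it is $2\pi i$-periodic and symmetric about the real line; because $F_n\to\infty$ on $\mathbb R$, and hence on every line $\mathbb{R}+2\pi i k$, the Fatou set misses $\mathbb{R}+2\pi i\mathbb{Z}$, so any disc $D$ contained in it lies in a single period strip. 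Fix such a disc and, by normality, a subsequence $F_{n_k}$ converging almost uniformly on $D$ to some $g\colon D\to\widehat{\mathbb C}$. One now rules out, in turn, the possibilities for $g$.

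If $g\equiv\infty$, writing $e^{F_{n_k-1}}=F_{n_k}/\lambda_{n_k}$ and using only the upper bound $\lambda_{n_k}<M$ gives $\operatorname{Re} F_{n_k-j}\to+\infty$ locally uniformly on $D$ for every fixed $j\ge1$; such functions also tend to $\infty$ locally uniformly, and pushing this both forwards and backwards along the composition contradicts normality of $\{F_n\}$ on $D$ — only $0<\bar\lambda\le\lambda_n\le M$ is used here. If $g$ is holomorphic and non-constant, then $g(D)$ is open, so by Hurwitz the images $F_{n_k}(D)$ eventually contain a fixed disc; applying the maps $f_{\lambda_{n_k+j}}$ and using the $2\pi i$-periodicity of the exponential, these images spread out to cover arbitrarily large annuli around $0$, a value $F_n$ never attains, again contradicting normality and again using nothing about $\bar\lambda$ beyond positivity. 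The surviving case is $g\equiv c$, a finite constant. Shifting the subsequence forwards one obtains $F_{n_k+m}\to c_m$ almost uniformly on $D$, where $c_0=c$ and $c_{m+1}=\mu_{m+1}e^{c_m}$ for suitable $\mu_m\in[\bar\lambda,M]$ (after passing to a further subsequence along which the relevant $\lambda$'s converge). A diagonal argument over $k$ and $m$ then yields a subsequence of $F_n$ that either tends to $\infty$ on $D$, impossible by the previous paragraph, or tends to a finite real constant on $D$, impossible by the remaining hypothesis of Proposition~\ref{cora}. Since no case for $g$ survives, $J(\lambda_n)=\mathbb C$.

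The step I expect to be the main obstacle is the last case, $g\equiv c$ with $c$ finite: in the original proof $\bar\lambda>\frac1e$ is used to force the limit orbit $(c_m)$ either to escape to infinity or to be drawn towards the real axis, and in the present setting this information must instead be extracted from the hypotheses — the essential point being that a limit orbit $(c_m)$ staying bounded and bounded away from $\mathbb R$ for all $m$ is incompatible with $F_n(x)\to\infty$ for every $x\in\mathbb R$. Apart from this, the remaining work is the routine bookkeeping of checking that every estimate in the appendix proof other than the two flagged ones uses only the two-sided bound $0<\bar\lambda\le\lambda_n\le M$.
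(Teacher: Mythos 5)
Your overall strategy --- extract a limit of a subsequence on a Fatou disc, rule out every possibility, and let the hypothesis of Proposition~\ref{cora} kill real constant limits --- is the same in spirit as the paper's, but the argument you sketch is not the appendix proof, and the places where it deviates are exactly the places left unproved. First, the Fatou set avoiding $\mathbb{R}+2\pi i\mathbb{Z}$ does \emph{not} follow ``because $F_n\to\infty$ on $\mathbb{R}$'': a normal family is perfectly allowed to converge to $\infty$ locally uniformly. This is the content of Lemma~\ref{lemre}, whose proof needs the blow-up of $(F_n)'$ plus Bloch's theorem to produce points whose image has imaginary part in $\pi+2\pi\mathbb{Z}$ and is then sent to the negative reals. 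Likewise your dismissal of $g\equiv\infty$ by ``pushing forwards and backwards along the composition'' is not in itself a contradiction with normality; the paper excludes this case again via Bloch's theorem against Corollary~\ref{rekopie}. Second, your exclusion of a non-constant limit (``by Hurwitz the images contain a fixed disc; applying the maps \dots these images spread out to cover arbitrarily large annuli around $0$'') is not an argument: the image of a bounded disc under $f_\lambda$ need not grow at all (for instance if it lies in a left half-plane), and nothing forces the omitted value $0$ to be approached. The paper's route here is different and essential: using Lemma~\ref{pochodna}, Corollary~\ref{rekopie} and Bloch, the forward images of a Fatou disc are first confined to the strip $S=\{|\Im z|<\pi\}$, then by a Baire category argument to $\{0<\Im z<\pi/2\}$ on a smaller disc, and then all limits in $S^{+}$ --- constant or not --- are excluded because the inverse branches $g_\lambda:S^{+}\to S^{+}$ contract the hyperbolic metric uniformly on compacts (Schwarz lemma plus the translation identity $g_{\lambda_2}=g_{\lambda_1}-\log(\lambda_2/\lambda_1)$, which uses only $\lambda_n\geqslant\bar{\lambda}>0$).

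Third, and most seriously, your finite-constant case is reduced to the claim that a limit orbit $(c_m)$ staying bounded and bounded away from $\mathbb{R}$ is incompatible with $F_n(x)\to\infty$ for real $x$; you flag this as the main obstacle and never prove it, and it is also not how the proof goes. In the paper a constant limit off the real axis is ruled out by the hyperbolic expansion just described, \emph{after} the strip confinement; the escape hypothesis enters only through Lemma~\ref{lemre} and Corollary~\ref{rekopie}, and the extra hypothesis of Proposition~\ref{cora} enters only at the very end, to forbid the one surviving possibility of a constant limit on $\mathbb{R}$ (statement~\ref{wykgran}). So as written your proposal leaves the two genuinely nontrivial cases --- non-constant limits and constant limits off the real line --- without proof; filling them requires precisely the strip-confinement plus hyperbolic-contraction machinery of the appendix, in which the only role of the bounds $0<\bar{\lambda}\leqslant\lambda_n<M$ is the one you correctly identified.
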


\begin{proposition}\label{corjed}
Let $M > \lambda_n \geqslant \frac{1}{e}$ for some constant $M > 0$ and $\forall_{x \in \mathbb{R}} \lim\limits_{n \rightarrow \infty} F_n (x) = \infty$. If there is no open set $U$ such that $F_{n}$ converges on $U$ almost uniformly to $1$, then $J(\lambda_n) = \mathbb{C}$.
\end{proposition}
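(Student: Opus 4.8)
The plan is to deduce the statement from Proposition~\ref{cora}, applied with $\bar\lambda=\tfrac1e$. It therefore suffices to prove the following: if $\lambda_n\ge\tfrac1e$ for all $n$, if $F_n(x)\to\infty$ for every $x\in\mathbb R$, and if there is an open set $U$ together with a subsequence $(F_{n_k})$ converging almost uniformly on $U$ to a constant $a\in\mathbb R$, then there is an open set on which the \emph{whole} sequence $(F_n)$ converges almost uniformly to $1$ --- which contradicts the hypothesis, so that in fact no such $U$ exists and $J(\lambda_n)=\mathbb C$. Two preliminary reductions come for free: since $F_n\to\infty$ on $\mathbb R$, the set $U$ can contain no real point; and since the $\lambda_n$ are real we have $F_n(\bar z)=\overline{F_n(z)}$, so the whole picture is symmetric about $\mathbb R$ and we may assume $U$ lies in the open upper half-plane. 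Replacing $(n_k)$ by $(n_k+1)$ and passing to a further subsequence along which $\lambda_{n_k+1}$ converges (recall $\lambda_{n_k+1}\in[\tfrac1e,M]$), we may moreover assume $a>0$, since then the new limit is $\mu e^{a}\ge e^{a-1}>0$.

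Next I would iterate forward. By a diagonal extraction over $m$ one may assume $F_{n_k+m}\to a_m$ almost uniformly on $U$ for every $m\ge 0$, with $a_0=a$ and $a_{m+1}=\mu_{m+1}e^{a_m}$, where $\mu_{m+1}:=\lim_k\lambda_{n_k+m+1}\in[\tfrac1e,M]$. In particular $a_{m+1}\ge g(a_m)$ with $g(t)=e^{t-1}$, and $g$ is increasing, satisfies $g(t)>t$ for $t\ne 1$ and $g(1)=1$, and $g^{\circ m}(t)\to\infty$ for every $t>1$. Hence exactly one of two things happens: \textup{(A)} $(a_m)$ is non-decreasing with $a_m\uparrow 1$ (and then, from $\mu_{m+1}=a_{m+1}e^{-a_m}$ and $te^{-t}\le e^{-1}$ with equality only at $t=1$, also $\mu_m\to\tfrac1e$); or \textup{(B)} $a_m\to\infty$.

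Case \textup{(A)} is where $1$ appears as a ``ghost'' parabolic fixed point: the parabolic map $f_{1/e}(w)=e^{w-1}$ has at $w=1$ an attracting petal tangent to $(-\infty,1)$, and since every $f_{\lambda_n}$ has $\lambda_n\ge\tfrac1e$ it never pushes a point slightly to the left of $1$ off the attracting side. For $m$ large, $F_{n_k+m}$ is uniformly close on compacts of $U$ to $a_m$, which is close to and at most $1$; carrying a petal / Fatou-coordinate estimate for the perturbations $\lambda e^{z}$ ($\lambda\ge\tfrac1e$) of $e^{z-1}$ forward, and being careful about the order of the two limits coming from the diagonal extraction, one produces an open set $V$ with $F_n\to 1$ almost uniformly on $V$ --- the forbidden situation. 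In case \textup{(B)} a second diagonal extraction yields a single subsequence $(N_k)$ with $F_{N_k}\to\infty$ almost uniformly on $U$; this forces $\Re F_{N_k-1}\to+\infty$ on $U$, hence $|F_{N_k-1}|\to\infty$, and, iterating, $\Re F_{N_k-i}\to+\infty$ on $U$ for every fixed $i$. One then plays this against the facts that pulling $F_{n_k}\to a$ back one step keeps $\Re F_{n_k-1}$ bounded above on $U$ uniformly in $k$, and that $|F_n(z)|\le F_n(\Re z)$ (an easy induction) confines every complex orbit below the escaping real orbit of its real part; this should be incompatible with normality along the reflection-symmetric set $U$, giving $U\subset J(\lambda_n)$, after which Proposition~\ref{cora} already yields $J(\lambda_n)=\mathbb C$. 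The main obstacle is precisely this last point together with the limit-swap in case \textup{(A)}: upgrading ``a subsequence, or a forward-shifted subsequence, lies near $1$'' to ``the full sequence converges to $1$ on an open set'', and excluding the escaping alternative. This is essentially the content of the expansion estimates in the proof of Theorem~\ref{zdunik} reproduced in the appendix, and the cleanest route is probably to invoke those estimates rather than to reprove them.
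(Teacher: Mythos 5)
Your reduction treats Proposition \ref{cora} as a black box, and this is where the gap lies. To invoke \ref{cora} you must exclude a subsequence of $F_n$ converging almost uniformly to a real constant on an \emph{arbitrary} open set $U$; no normality of the full family on $U$ is available, since $U$ is not assumed to meet the Fatou set. Both halves of your dichotomy founder on exactly this point. In case (A) the diagonal extraction only controls $F_{n_{k_j}+m}$ for each fixed $m$; to upgrade this to convergence of the \emph{whole} sequence to $1$ on some open set you need a quantitative trapping estimate, uniform in $\lambda\in[\tfrac1e,M)$, saying that on a thin strip about $\mathbb{R}$, away from a small disk about $1$, every $f_\lambda$ increases the real part by a definite amount, while near $1$ it can decrease it only negligibly --- and you also need to know beforehand that the full orbit of the set stays in such a strip with bounded real part, which in the paper is statement \eqref{wykgran} and is extracted from normality on the hypothetical Fatou component. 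You explicitly defer this step (``carrying a petal / Fatou-coordinate estimate forward''); note also that the maps $\lambda e^z$ with $\lambda>\tfrac1e$ have no real fixed point at all, so ``the attracting petal of the perturbation'' is not an object you can lean on without the uniform estimate, which is precisely the missing content. In case (B) no contradiction is actually produced: you appeal to ``normality along $U$'', which is not among your hypotheses, and the facts you list ($\Re F_{N_k-i}\to+\infty$ at the escaping times, $\Re F_{n_k-1}$ bounded above at the returning times, and $|F_n(z)|\le F_n(\Re z)$, whose right-hand side tends to infinity anyway) are mutually consistent with a sparse subsequence returning near a finite real constant while intermediate iterates escape.

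For comparison, the paper does not verify the hypothesis of Proposition \ref{cora}; it re-runs the appendix proof inside a hypothetical Fatou component, where normality, the confinement of $F_n(W^*)$ in $S^+$ and \eqref{wykgran} are available, and then replaces only the final step: the adapted Lemma 14 of \cite{ZU} with $\bar\lambda=\tfrac1e$ shows that the strip $\{0<\Im z<\varepsilon\}\setminus D(1,\varepsilon_2)$ is pushed to the right by a fixed amount, which rules out every constant real limit except $1$, and convergence to $1$ is excluded by hypothesis. If you wish to keep your structure, you must either prove the uniform push-to-the-right estimate and establish the analogue of \eqref{wykgran} for your arbitrary $U$ (which seems strictly harder than the paper's route), or abandon the black-box use of Proposition \ref{cora} and argue, as the paper does, with its proof inside a Fatou component.
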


It is perhaps important to note that both of the propositions follow from the original authors' work in \cite{ZU}, but are not stated by them explicitly. Theorem \ref{zdunik} is Theroem 7 from \cite{ZU}, and the proof of both propositions \ref{cora}, \ref{corjed} is nearly identical to that of Theorem 7 from \cite{ZU}. The only meaningful difference is that in the propositions we explicitly assume convergence to infinity on the real line, and we exclude the possibility of a Fatou component converging to a constant on the real line. These two properties are proved to be true for the sequences considered by Urbański and Zdunik as part of the proof of theorem \ref{zdunik}, but will not necessarily be true for more general sequences we shall consider in this document. This is why we reformulate the theorem in the form of these propositions, to have a useful criterion for when the Julia set is the whole plane. The proof of Proposition \ref{cora} (and, by extension, of Theorem \ref{zdunik} and Proposition \ref{corjed}) can be found in the appendix. It is almost word for word the exact proof provided by the authors of \cite{ZU}.

In other words, the propositions simply state that if we can somehow replace Lemma $14$ from \cite{ZU} with some other statement, which excludes constant limits on the real line, then one can repeat the proof in \cite{ZU} for any given sequence, without the strong assumption of it being bounded from below by a constant separated from $\frac{1}{e}$. Finding such statements is one of the goals of this document.

\section{The Julia set for general $(\lambda_n)$}
We shall discuss various ways in which one can weaken the assumptions of Theorem \ref{zdunik}, and consider more general sequences of $(\lambda_n)$. The following subsection concerns sequences not converging to $\frac{1}{e}$. In a sense, the next two theorems, along with Theorem \ref{zdunik}, deal with cases similar to the autonomous iteration for $\lambda > \frac{1}{e}$.

\subsection{Sequences of $(\lambda_n)$ that do not converge to $\frac{1}{e}$}

\begin{theorem}
Let $\forall_n \lambda_n \in (\bar{\lambda}, M) \cup \{\frac{1}{e} \}$, where $\bar{\lambda} > \frac{1}{e}$ and $M < \infty$, then the equality $J(\lambda_n) = \mathbb{C}$ holds if and only if $\lambda_n > \bar{\lambda}$ for infinitely many $n$.
\end{theorem}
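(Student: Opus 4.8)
The plan is to prove the two implications separately, using Proposition~\ref{cora} for the ``if'' direction and an explicit construction of a Fatou point for the ``only if'' direction.

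First assume $\lambda_n > \bar\lambda$ for infinitely many $n$; call this set of indices $S$. I would verify the two hypotheses of Proposition~\ref{cora}. The lower bound $\lambda_n \geq \tfrac1e > 0$ and upper bound $\lambda_n < M$ are immediate. For the escape of the real line, note that each $f_{\lambda_n}$ maps $\mathbb{R}$ into $(0,\infty)$ and is increasing; the maps with $\lambda_n = \tfrac1e$ are nonexpanding on $[1,\infty)$ but never decrease a point below the parabolic fixed point $1$ by much, while each step with $n \in S$ pushes points up by a definite amount once they lie in a fixed half-line $[c,\infty)$ (since $\lambda e^x / x \to \infty$). A short argument chaining these two facts shows $F_n(x) \to \infty$ for every real $x$. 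Finally, to exclude a subsequence converging almost uniformly on some open $U$ to a real constant $a$: if such a $U$ existed, then on a slightly smaller disk the $F_{n}$ would eventually take values in a small neighbourhood of $a$; composing one more time with an $f_{\lambda_m}$, $m \in S$ large, and using that $f_{\lambda_m}$ has derivative $\lambda_m e^{z}$ of modulus $\geq \bar\lambda e^{\operatorname{Re} z}$ which is bounded below on that neighbourhood, one sees the images cannot stay bounded (this is essentially the mechanism behind Lemma~14 of \cite{ZU}, now available because infinitely many $\lambda_n$ are separated from $\tfrac1e$). Hence Proposition~\ref{cora} applies and $J(\lambda_n) = \mathbb{C}$.

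For the converse, suppose $\lambda_n > \bar\lambda$ for only finitely many $n$, say $\lambda_n = \tfrac1e$ for all $n \geq N$. Then for $n \geq N$ we have $F_n = f_{1/e}^{\,n-N} \circ F_N$, an eventually autonomous sequence equal to the classical parabolic exponential $E(z) = \tfrac1e e^z$ precomposed with the fixed entire map $F_N$. The map $E$ has a nonempty Fatou set, namely the parabolic basin $\mathcal{A}$ of the fixed point $1$, on which the autonomous iterates $E^{k}$ converge locally uniformly to $1$. Pulling back by $F_N$, the open set $F_N^{-1}(\mathcal{A})$ is nonempty (e.g.\ it contains a left half-plane, since $\operatorname{Re} z \to -\infty$ forces $F_N(z)$ into a neighbourhood of $0 \in \mathcal{A}$), and on it $F_n \to 1$ locally uniformly, so $F_N^{-1}(\mathcal{A}) \subseteq F(\lambda_n)$. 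Therefore $J(\lambda_n) \neq \mathbb{C}$.

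The main obstacle is the normality-excluding step in the first direction: one must be certain that having infinitely many, but possibly very sparse, indices with $\lambda_n > \bar\lambda$ is enough to rerun the argument of Theorem~\ref{zdunik}/Lemma~14 from \cite{ZU}. The key point to check carefully is that a single good index $m \in S$, occurring after the $F_n$ have been forced close to a real constant, already destroys normality — so sparseness of $S$ does no harm, as long as $S$ is infinite. I would isolate this as a lemma: if $F_{n_k} \to a \in \mathbb{R}$ almost uniformly on an open set and $\lambda_{m} > \bar\lambda$ for some $m > n_k$, then $F_{m}$ already has large oscillation there; combined with the real-line escape this contradicts normality, exactly as in the cited proof.
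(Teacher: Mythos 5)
The main ("if") direction has a genuine gap, and it comes from your choice of criterion. You invoke Proposition \ref{cora}, whose hypothesis demands that \emph{no subsequence} of $F_n$ converges almost uniformly to \emph{any} real constant on any open set, and your sketch does not establish this. First, from $F_{n_k}\to a$ you only know where the images are at the subsequence times $n_k$; the next good index $m\in S$ may occur much later, after a long block of $\frac1e$-maps has moved the images elsewhere (toward $1$, say, if $a<1$), so ``composing one more time with $f_{\lambda_m}$'' acts on a set you have not located. Second, the mechanism you name is not the right one: on a tiny neighbourhood of $a$ the bound $|f_{\lambda_m}'(z)|\geq\bar\lambda e^{\Re z}$ is just a constant (possibly even $<1$), and one map with bounded derivative sends a tiny disk to another tiny disk --- it produces neither unbounded images nor ``large oscillation''. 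The actual mechanism behind Lemma 14 of \cite{ZU} is a horizontal translation of a thin strip by a definite amount, and to exploit it one needs the images confined to that strip for \emph{all} large $n$, not only at the times $n_k$; this full-sequence confinement is exactly what a black-box use of Proposition \ref{cora} does not give you (in the paper it comes either from an assumed convergence of the whole sequence, or from statement \eqref{wykgran} inside the proposition's proof, which is how Theorem \ref{randSeq} is handled). The paper sidesteps all of this by applying Proposition \ref{corjed}, available here because $\lambda_n\geq\frac1e$: one only has to exclude convergence of the \emph{full} sequence to the single constant $1$, and that is easy --- choose $v_1<1<v_2$ and $\varepsilon$ so that convergence to $1$ traps $F_n(U)$ in $\{v_1<\Re z<v_2,\ |\Im z|<\varepsilon\}$ for $n>N_1$, note the strip right of $v_1$ is forward invariant, and then a single good index $\lambda_{N_2}>\bar\lambda$ forces $\Re F_{N_2}>v_2$, after which the real parts grow by a fixed $\delta$ per step, contradicting convergence. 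Your real-line escape is fine in substance (all maps satisfy $\lambda e^x\geq e^{x-1}\geq x$, and each good map increases a real point by at least $1+\log\bar\lambda>0$), but the normality-exclusion step must be rebuilt: either switch to Proposition \ref{corjed}, or re-enter the proof of Proposition \ref{cora} so as to have \eqref{wykgran} at your disposal.

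The converse direction is essentially the paper's argument (the tail is eventually the autonomous map $e^{z-1}$, so the Fatou set contains $F_N^{-1}$ of its parabolic basin $\mathcal{A}$), but your parenthetical justification is incorrect: as $\Re z\to-\infty$ one has $F_N(z)\to f_{\lambda_N}\circ\cdots\circ f_{\lambda_2}(0)$, a positive real number which may exceed $1$ and hence lie in $J(e^{z-1})$, so $F_N^{-1}(\mathcal{A})$ need not contain a left half-plane. Nonemptiness is nevertheless immediate, since $F_N$ omits only finitely many values and $\mathcal{A}$ is open, so this slip is easily repaired.
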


\begin{proof}
First we assume that $\lambda_n > \bar{\lambda}$ only for finitely many $n$. In this case for sufficiently large $N$ we have 
$$
    \forall_{n > N} \lambda_n = \frac{1}{e}.
$$
which means the Fatou set is a preimage of the Fatou set of $e^{z-1}$ under a certain composition. Since the Fatou set of $e^{z-1}$ is non-empty, this concludes the proof of the easier implication. \\

Now assume $\lambda_n > \bar{\lambda}$ for infinitely many $n$. We shall show that 
\begin{equation}\label{geneq1}
    \forall_{x \in \mathbb{R}} \lim\limits_{n \rightarrow \infty} F_n (x) = \infty.
\end{equation}
and that there is no such open set $U$ on which $F_n \rightarrow 1$ almost uniformly. Then by Proposition \ref{corjed} we will be done.

We begin by checking the first condition. For $x > 1$ iterates of $e^{x-1}$ alone converge to infinity. If $x \leqslant 1$, then iterates of $e^{x-1}$ converge to $1$, which means that for some $n_0$ we will have $F_{n}(x) > \frac{e}{\bar{\lambda}}$ for all $n > n_0$. Now for $x > \frac{e}{\bar{\lambda}}$ we have $\bar{\lambda} e^x > 1$. Since we assume that $\lambda_n > \bar{\lambda}$ for infinitely many $n$, this means that for some index $n_1 > n_0$ we have $F_{n_1}(x) > 1$, and from this point the iterates converge to infinity. Thus indeed \eqref{geneq1} holds.

For a fixed $\varepsilon, v_1, v_2$ let us denote the sets 
$$
S_{\varepsilon} = \{z \in \mathbb{C} : |\Im(z)| < \varepsilon \}
$$

$$
    V_{\varepsilon} = \{z \in \mathbb{C} : |\Im(z)| < \varepsilon, v_1 < \Re(z) < v_2 \}.
$$
Now we shall set constants as follows:
\begin{enumerate}
    \item \label{it1} Let $v_1 < 1$ and $\varepsilon$ be such that $\forall_{z \in S_{\varepsilon}, \Re(z) > v_1} \Re(\bar{\lambda} e^z) > v_2 > 1$ for some constant $v_2 > 1$ (which we also fix at this moment)
    \item \label{it2} If necessary decrease $\varepsilon$ further so that $\forall_{z \in S_{\varepsilon}, \Re(z) > v_1} \Re(e^{z-1}) > v_1$
    \item \label{it3} Finally if necessary decrease $\varepsilon$ so that $\forall_{z \in S_{\varepsilon}, \Re(z) > v_2} \Re(e^{z-1}) > \Re(z) + \delta$ for some constant $\delta > 0$
\end{enumerate}

Let us assume that there is an open set $U$ on which $F_n$ converge uniformly to $1$, then after a finite number of iterations $N_1$ we should have
$$
    F_{N_1} (U) \subset V_{\varepsilon}.
$$
and by the definition of $V_{\varepsilon}$ this yields
$$
    \forall_{n > N_1} \forall_{z \in U} \Re(F_{n} (z)) > v_1 .
$$

Also by our assumptions we know that there is an index $N_2 > N_1$ such that $\lambda_{N_2} > \bar{\lambda}$, which by (\ref{it1}) gives us
$$
    \forall_{z \in U} \Re(F_{N_2} (z)) > v_2 .
$$
and by (\ref{it3})
$$
    \forall_{z \in U} \forall_{n > N_2} \Re(F_{n} (z)) > \Re(F_{n-1} (z)) + \delta.
$$
Thus the real part of the iterations increase to infinity, and any convergence to a real constant would be a contradiction.
\end{proof}

The following theorem is in a similar vein.

\begin{theorem}\label{randSeq}
Let $0 < \delta < \frac{1}{e}$. Consider a sequence of $\lambda_n$ chosen randomly with uniform distribution from the interval $(\frac{1}{e} - \delta , \frac{1}{e} + \delta)$, in the sense that the sequence is chosen from the product space of $(\frac{1}{e} - \delta , \frac{1}{e} + \delta)^{\mathbb{N}}$ with the usual product measure. Then for almost every sequence $(\lambda_n)$ we have $J(\lambda_n) = \mathbb{C}$.
\end{theorem}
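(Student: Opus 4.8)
The plan is to deduce the theorem from Proposition~\ref{cora}, applied with $\bar\lambda=\frac1e-\delta$ and $M=\frac1e+\delta$ (so $0<\bar\lambda<\frac1e<M<\infty$). Thus it suffices to prove that, for almost every sequence $(\lambda_n)$: \textbf{(A)} $\lim_{n\to\infty}F_n(x)=\infty$ for every $x\in\mathbb{R}$; and \textbf{(B)} there is no open set $U$ on which some subsequence of $F_n$ converges almost uniformly to a real constant. Property (A) is where the randomness does the routine work; (B) is the main obstacle, since here $\lambda_n$ may fall below $\frac1e$, so one must exclude convergence to an \emph{arbitrary} real constant, not merely to $1$ as in Proposition~\ref{corjed}. (One may also observe that ``the Fatou set is non-empty'' is a tail event --- changing $\lambda_1,\dots,\lambda_N$ replaces the Fatou set by its preimage under the fixed, zero-free, hence open and almost-surjective map $F_N$, and this preimage is non-empty iff the Fatou set of $(\lambda_{N+1},\lambda_{N+2},\dots)$ is --- so by the zero--one law it would suffice to prove $\mathbb{P}(J(\lambda_n)=\mathbb{C})>0$; but the argument below gives probability $1$ directly.)

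For (A): $F_n$ is increasing on $\mathbb{R}$, with $\inf_{x\in\mathbb{R}}F_n(x)=a_n$ where $a_1=0$ and $a_{n+1}=\lambda_{n+1}e^{a_n}$; hence $F_n(x)\ge a_n\ge\bar\lambda$ for all $n\ge2$ and $x\in\mathbb{R}$. If $\lambda'\ge\lambda>0$ then $f_{\lambda'}(x)-x\ge f_\lambda(x)-x$ for every $x$, and $m(\mu):=\min_{x\in\mathbb{R}}(\mu e^x-x)>0$ whenever $\mu>\frac1e$. Let $p$ be the larger fixed point of $f_{\bar\lambda}$ (it exists since $\bar\lambda<\frac1e$, and $\bar\lambda e^{p}=p>1$) and set $T:=p+1$; then $f_\lambda(x)\ge x+\gamma$ for every $x\ge T$ and $\lambda\in[\bar\lambda,M]$, where $\gamma:=\bar\lambda e^{T}-T>0$, so once an orbit reaches $T$ it increases to infinity. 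Now put $m_0:=m(\frac1e+\frac\delta2)>0$ and $N_0:=\lceil T/m_0\rceil+2$. For $k\ge1$ the events ``$\lambda_n>\frac1e+\frac\delta2$ for every $n$ in the block $\{kN_0+1,\dots,(k+1)N_0\}$'' are independent of probability $4^{-N_0}>0$, so almost surely infinitely many occur; on such a block $F_n(x)$ grows by at least $m_0$ at each of its $N_0$ steps, so from $F_{kN_0}(x)\ge\bar\lambda$ it surpasses $T$ before the block ends and then escapes. This gives (A), indeed uniformly in $x$.

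For (B), suppose toward a contradiction that, on a set of positive probability, there are an open $U$, a subsequence $(n_j)$, and $a^*\in\mathbb{R}$ with $F_{n_j}\to a^*$ almost uniformly on $U$. By the usual extension of almost uniform convergence we may take $U=\Omega$ to be a Fatou component and fix a closed disc $K\subset\Omega$; using that $F_n$ depends on $z$ only modulo $2\pi i$ we may place the centre of $K$ in $\{|\Im z|\le\pi\}$, so that it suffices to treat countably many triples $(a,\varepsilon,K)$ with $a,\varepsilon\in\mathbb{Q}$. The deterministic core is a \emph{forcing lemma}: choose $\varepsilon_0>0$ small enough that both $(\frac1e+\frac\delta2)\cos\varepsilon_0>\frac1e+\frac\delta4$ and $\bar\lambda e^{T}\cos\varepsilon_0>T$; then there is an integer $L_0=L_0(\delta)$ such that if $V\subset\{|z-a^*|\le\varepsilon\}$ with $\varepsilon$ sufficiently small (depending on $a^*$ and $\delta$) and $\lambda_{m+1},\dots,\lambda_{m+L_0}>\frac1e+\frac\delta2$, then every point of $f_{\lambda_{m+L_0}}\circ\cdots\circ f_{\lambda_{m+1}}(V)$ has real part $>T$ and imaginary part of absolute value $<\varepsilon_0$: at each step the imaginary part is multiplied by $\lambda e^{\Re(\cdot)}$, so over these finitely many steps it is amplified by at most a fixed factor and stays below $\varepsilon_0$ if $\varepsilon$ was small, while $\cos(\Im(\cdot))>\cos\varepsilon_0$ then forces the real part to grow by at least $m(\frac1e+\frac\delta4)>0$ per step until it exceeds $T$, after which it stays above $T$. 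To use this despite the ``collapse times'' $n_j$ depending on $\lambda_1,\dots,\lambda_{n_j}$ while the forcing run needs fresh randomness, fix $(a,\varepsilon,K)$ and put $S:=\{n:\sup_K|F_n-a|<\varepsilon\}$; each event $\{n\in S\}$ is measurable with respect to $\lambda_1,\dots,\lambda_n$, so enumerating $S$ gives stopping times $m_1<m_2<\cdots$ with $\mathbb{P}(\lambda_{m_k+1},\dots,\lambda_{m_k+L_0}>\frac1e+\frac\delta2\mid\lambda_1,\dots,\lambda_{m_k})=4^{-L_0}$ on $\{m_k<\infty\}$. The conditional Borel--Cantelli lemma then gives: almost surely, whenever $S$ is infinite, infinitely many of its elements are immediately followed by such a good run.

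Now if the bad component $\Omega$ exists, choose $a\in\mathbb{Q}$ near $a^*$, $\varepsilon\in\mathbb{Q}$ small enough for the forcing lemma (and for the estimates below), and $K\subset\Omega$; then $S$ contains a tail of $(n_j)$, so some $m_k\in S$ is followed by a good run, and the forcing lemma makes $W:=F_{m_k+L_0}(K)$ a non-empty open connected set contained in $\{\Re z>T,\ |\Im z|<\varepsilon_0\}$ of small diameter. The remaining --- and the only genuinely delicate --- step is to show that the forward images of $W$ must spread and eventually ``wrap around'' the origin: since $W$ has vertical extent $<2\pi$, each $f_\lambda$ is injective on the current image and has $|f_\lambda'|=\lambda e^{\Re}\ge\bar\lambda e^{T}=pe>1$ on it, so by a standard distortion estimate $\operatorname{diam}(F_{m_k+L_0+\ell}(K))$ grows geometrically while the image stays small (its real part stays large because, once large, one step makes it far larger); once it is large enough, a Misiurewicz-type argument --- of the kind already used inside the proof of Proposition~\ref{cora} --- shows that $F_n(K)$ then contains, for every subsequent $n$, a curve of diameter $\ge2\pi$ winding around the origin, hence is never again contained in a disc of radius $<1$. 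This contradicts $F_{n_{j'}}\to a^*$ almost uniformly on $K$ for $n_{j'}>m_k+L_0+\ell$, proving (B); with (A) and Proposition~\ref{cora} we conclude $J(\lambda_n)=\mathbb{C}$ almost surely. I expect the only real work to lie in this escape-and-spread step --- it must be run for the component $\Omega$, since the real part of an individual orbit can plunge back to $-\infty$ after the forced escape, and all thresholds ($\varepsilon$ in terms of $L_0,a^*$, and $L_0$ in terms of $\delta$) must be made uniform in the a priori unknown $a^*$ --- while properties (A), the tail-event observation, the forcing lemma, and the conditional Borel--Cantelli step are routine.
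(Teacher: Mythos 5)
Your overall strategy coincides with the paper's: reduce to Proposition \ref{cora}, verify divergence on $\mathbb{R}$ almost surely, and exploit the almost sure occurrence of long runs of $\lambda_n>\frac1e+\frac{\delta}{2}$ to push the images of a hypothetical collapsing Fatou disc far to the right. Your part (A) is fine, your forcing lemma is essentially sound (with $\varepsilon$, and in effect $L_0$, allowed to depend on the rational centre $a$, which the countable union handles), and the stopping-time/conditional Borel--Cantelli step, though correct, is heavier machinery than the situation requires.

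The genuine gap is exactly the step you flag as ``the only real work'': after the forced escape you must rule out that the image later returns and re-collapses near $a^*$ at some $n_{j'}$, and your escape-and-spread sketch does not do this. The assertion that ``its real part stays large because, once large, one step makes it far larger'' is unjustified: $\Re f_\lambda(z)=\lambda e^{\Re z}\cos(\Im z)$ while $|\Im f_\lambda(z)|=\lambda e^{\Re z}|\sin(\Im z)|$, so the imaginary part is amplified by the same enormous factor that boosts the real part; within a few steps past $T$ the imaginary part need not remain below $\pi/2$, the real part can drop (even become negative), and hence neither the geometric diameter growth nor the persistence claim (``contains, for every subsequent $n$, a curve of diameter $\ge 2\pi$ winding around the origin, hence is never again contained in a small disc'') is established. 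Indeed the invariant you aim for cannot occur at all: by Corollary \ref{rekopie} the images $F_n(K)$ of a Fatou disc avoid $\mathbb{R}$, so being connected they lie in a half-plane and never wind around the origin. The paper closes this step differently: the contradiction hypothesis, combined with the exclusion of all limits other than real constants carried out in the proof of Proposition \ref{cora} (statement \eqref{wykgran}), confines the \emph{entire tail} $F_n(U)$ --- not just the collapsing subsequence --- to a thin strip $S_\varepsilon$ around $\mathbb{R}$ with bounded real part; one long run then pushes $\Re F_n(U)$ beyond $q+\alpha$, and inside $S_\varepsilon$ to the right of $q+\alpha$ \emph{every} admissible map, including those with $\lambda_n<\frac1e$, increases the real part by a fixed $\beta>0$ (inequality \eqref{eqrnd}), so the real parts increase forever and no later collapse is possible --- no expansion, Bloch, or winding argument is needed. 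The full-tail confinement also makes your stopping times unnecessary: it suffices that a run of the fixed length occurs at arbitrarily large indices, an event of probability one independent of the random confinement time. To repair your write-up, either import this confinement as the paper does, or supply a genuine proof that the image can never again become small; as it stands that claim is unsupported.
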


\begin{proof}
The proof will be similar to that of the previous theorem, this time we would like to apply Proposition \ref{cora}. We shall show that the assumptions of the proposition hold for almost any sequence $(\lambda_n)$.

Let us fix a sequence $(\lambda_n)$. The function $(\frac{1}{e} - \delta)e^z$ has $2$ fixed points on the real line, which we shall denote as p and q. Similarly, for all $\lambda_n \in [\frac{1}{e} - \delta, \frac{1}{e})$ the function $\lambda_n e^z$ has $2$ real fixed points, which lie in the interval $[p,q]$. Consider the sets
$$
S_{\varepsilon} = \{z \in \mathbb{C} : |\Im(z)| < \varepsilon \}.
$$

$$
    V_{\alpha} = \{z \in \mathbb{C} : p - \alpha < \Re(z) < q + \alpha \}.
$$
and let us pick $\alpha, \varepsilon$ such that 

\begin{equation} \label{eqrnd}
    \forall_{z \in S_{\varepsilon} \setminus V_{\alpha}} \Re (f_{\lambda_n} (z)) > \Re(z) + \beta.
\end{equation}

for a constant $\beta$ independent of the choice of $\lambda_n \in (\frac{1}{e} - \delta, \frac{1}{e} + \delta)$, and also

\begin{equation} \label{eqstay}
    \forall_{z \in S_{\varepsilon} \cap V_{\alpha}} \forall_{\lambda_n \in (\frac{1}{e} - \delta, \frac{1}{e} + \delta)} \Re(f_{\lambda_n} (z)) > p - \alpha .
\end{equation}

Finally we may assume (by shrinking $\varepsilon$ if necessary) that we also have 

\begin{equation} \label{eqpush}
    \forall_{z \in S_{\varepsilon} \cap V_{\alpha}} \forall_{\lambda_n \in (\frac{1}{e} + \frac{\delta}{2}, \frac{1}{e} + \delta)} \Re(f_{\lambda_n} (z)) > \Re(z) + \beta.
\end{equation}

Assume now that indeed there is a constant on the real line which is the limit of a subsequence of $F_n$, on some open set $U$. We exclude the possibility of non-real limits for all subsequences (by proof of Proposition \ref{cora}, specifically this is statement \ref{wykgran}), which implies
$$
    \exists_N \forall_{n > N} F_n (U) \subset S_{\varepsilon}.
$$
which combined with \eqref{eqrnd} and \eqref{eqstay} yields
$$
    \exists_{N_1 > N} \forall_{n > N_1} F_{n} (U) \subset S_{\varepsilon} \cap \{\Re(z) > p - \alpha \}.
$$
Assume now that for a given sequence $\{\lambda_n \}$ there is a string of values $\lambda_n \in (\frac{1}{e} + \frac{\delta}{2}, \frac{1}{e} + \delta)$ of length at least $\frac{q - p + 2 \alpha}{\beta}$, that begins at the index $n_2 > N_1$. Then \eqref{eqpush} would imply that $$\forall_{z} \forall_{n > n_2 + \frac{q - p + 2 \alpha}{\beta}} \Re(F_n (z)) > q + \alpha .$$ This combined with \eqref{eqrnd} means that for such a sequence there can be no convergence to  constants on the real line, as the real part is increased by $\beta$ with all iterates.

Finally note that the set of sequences $\{\lambda_n \}$ containing a fixed string of $$\lambda_n \in (\frac{1}{e} + \frac{\delta}{2}, \frac{1}{e} + \delta)$$ of length at least $\frac{q - p + 2 \alpha}{\beta}$, at arbitrarily large indices, is a set of full measure, which concludes the proof. 

\end{proof}

It is worth noting that the uniform distribution does not play any key role in the reasoning above. In fact the proof of the previous theorem yields the following corollary:

\begin{cor}
Let $0 < \delta < \frac{1}{e}$. Let $\mu$ be a Borel probability measure on $(\frac{1}{e} - \delta, \frac{1}{e} + \delta)$ such that $\mu ((\frac{1}{e},\frac{1}{e} + \delta)) > 0$. Then for almost every (with respect to the product measure of $\mu$ on $(\frac{1}{e} - \delta, \frac{1}{e} + \delta)^{\mathbb{N}}$) sequence $\{\lambda_n \} \subset (\frac{1}{e} - \delta, \frac{1}{e} + \delta)^{\mathbb{N}}$ we have $J(\lambda_n) = \mathbb{C}$.
\end{cor}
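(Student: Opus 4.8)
The plan is to follow the proof of Theorem \ref{randSeq} essentially verbatim, the only genuinely new ingredient being a routine measure-theoretic replacement for the sub-interval $(\frac{1}{e}+\frac{\delta}{2},\frac{1}{e}+\delta)$ used there. Since $\mu\big((\frac{1}{e},\frac{1}{e}+\delta)\big)>0$ and $(\frac{1}{e},\frac{1}{e}+\delta)=\bigcup_{k\geqslant 2}(\frac{1}{e}+\frac{\delta}{k},\frac{1}{e}+\delta)$ is an increasing union, continuity of $\mu$ from below produces a $c\in(0,\frac{\delta}{2}]$ (of the form $c=\frac{\delta}{k}$) with $\rho:=\mu\big((\frac{1}{e}+c,\frac{1}{e}+\delta)\big)>0$. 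From now on $(\frac{1}{e}+c,\frac{1}{e}+\delta)$ plays exactly the role that $(\frac{1}{e}+\frac{\delta}{2},\frac{1}{e}+\delta)$ played in Theorem \ref{randSeq}.

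Next I would reproduce the geometric setup of Theorem \ref{randSeq}: let $p<q$ be the two real fixed points of $(\frac{1}{e}-\delta)e^z$, define $S_\varepsilon$ and $V_\alpha$ as there, and choose $\alpha>0$ small and $\varepsilon>0$ small so that \eqref{eqrnd} and \eqref{eqstay} hold for every $\lambda_n$ in the full interval and, after possibly shrinking $\varepsilon$ further, \eqref{eqpush} holds for every $\lambda_n\in(\frac{1}{e}+c,\frac{1}{e}+\delta)$. The verification is word for word the one in Theorem \ref{randSeq}: \eqref{eqpush} is now demanded for the possibly larger family $\lambda_n\in(\frac{1}{e}+c,\frac{1}{e}+\delta)\supseteq(\frac{1}{e}+\frac{\delta}{2},\frac{1}{e}+\delta)$, but on the bounded strip $S_\varepsilon\cap V_\alpha$ it still follows from $\frac{1}{e}e^{x}\geqslant x$ once $\varepsilon$ is small enough. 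I would also record the elementary inequality $\lambda e^x\geqslant x+c$, valid for all $x\geqslant 0$ and all $\lambda\geqslant\frac{1}{e}+c$ (because $\frac{1}{e}e^x\geqslant x$ and $e^x\geqslant 1$), which is what will handle the real line. Finally fix a block length $L:=\lceil(q+\alpha)/c\rceil+\lceil(q-p+2\alpha)/\beta\rceil$.

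The probabilistic step is where the general $\mu$ enters, and it is short: since under $\mu^{\mathbb{N}}$ the $\lambda_n$ are independent with common law $\mu$, the events $A_m=\{\lambda_{mL+1},\dots,\lambda_{mL+L}\in(\frac{1}{e}+c,\frac{1}{e}+\delta)\}$ are independent with $\mathbb{P}(A_m)=\rho^{L}>0$, so by the Borel--Cantelli lemma (second part) infinitely many of them occur almost surely; hence for $\mu^{\mathbb{N}}$-almost every sequence there are, at arbitrarily large starting indices, blocks of $L$ consecutive indices on which $\lambda_n\in(\frac{1}{e}+c,\frac{1}{e}+\delta)$. This is the only place uniformity was used in Theorem \ref{randSeq}, and all that is needed here is $\rho>0$. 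Fix such a sequence $(\lambda_n)$. For any $x\in\mathbb{R}$ we have $F_1(x)>0$, and then either $\Re(F_n(x))$ exceeds $q+\alpha$ at some point, after which \eqref{eqrnd} drives it to $+\infty$, or it stays in $(0,q+\alpha]$ until the next length-$L$ block, during which the elementary inequality forces an increase of at least $c$ per step and hence pushes it past $q+\alpha$; either way $F_n(x)\to\infty$, which is the first hypothesis of Proposition \ref{cora}. For the second hypothesis, suppose some open $U$ carries a subsequence of $F_n$ converging almost uniformly to a real constant. Excluding non-real constant limits exactly as in Theorem \ref{randSeq} (statement \ref{wykgran} in the proof of Proposition \ref{cora}) gives $F_n(U)\subset S_\varepsilon$ for large $n$; then \eqref{eqrnd} together with \eqref{eqstay} pushes all of $U$ into $S_\varepsilon\cap\{\Re(z)>p-\alpha\}$ after finitely many further steps, and a length-$L$ block at a still larger index, via \eqref{eqpush} inside $V_\alpha$ and \eqref{eqrnd} outside it, raises $\Re(F_n(z))$ by at least $\beta$ at each step for $z\in U$, eventually past $q+\alpha$ and then to $+\infty$ — contradicting convergence to a constant. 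Proposition \ref{cora} now gives $J(\lambda_n)=\mathbb{C}$.

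As for difficulty, there is essentially no obstacle beyond bookkeeping, since Theorem \ref{randSeq} already does all the analytic work. The one point I would be careful about is that Theorem \ref{randSeq} spells out the ``no real constant limit'' hypothesis of Proposition \ref{cora} in detail but is terse about the hypothesis $\lim_n F_n(x)=\infty$ for real $x$; I would therefore write out that part as above, where it is immediate from the elementary inequality $\lambda e^x\geqslant x+c$ combined with the almost sure occurrence of long blocks. The measure-theoretic choice of $c$ and the Borel--Cantelli argument are routine.
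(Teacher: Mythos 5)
Your proposal is correct and follows essentially the same route as the paper, which simply observes that the uniform distribution enters the proof of Theorem \ref{randSeq} only through the full-measure statement about long blocks of parameters in a fixed subinterval above $\frac{1}{e}$; your choice of a $\mu$-positive subinterval $(\frac{1}{e}+c,\frac{1}{e}+\delta)$ plus the second Borel--Cantelli lemma is exactly the intended replacement. Your explicit verification of the hypothesis $\lim_n F_n(x)=\infty$ for real $x$ (via $\lambda e^x\geqslant x+c$ on the blocks) is a detail the paper leaves implicit, and it is carried out correctly.
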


\subsection{Sequences $(\lambda_n)$ converging to $\frac{1}{e}$}
Let us move on to the case of sequences of real $\lambda_n$ which converge to $\frac{1}{e}$ from above. From now on we shall use the notation $F_{n}^{n_1} = f_{\lambda_{n_1 + n}} \circ ... \circ f_{\lambda_{n_1 + 1}}$ for a composition of $n$ functions $f_{\lambda_n}$ starting at the index $n_1$. Considering such compositions is convenient for us, since the tail of the sequence $\{\lambda_n \}$ is what determines the Julia set. Thus some conditions for non-autonomous sequences have to be formulated in terms independent of the first finitely many $\lambda_n$. We start with the following lemma.

\begin{lemma}\label{lem}
If for a sequence $\{ \lambda_n \}$ with $\lambda_n > 0$ we have $\exists_{n_1} \forall_n F_n^{n_1}(0) < 1$ then $J(\lambda_n) \neq \mathbb{C}$
\end{lemma}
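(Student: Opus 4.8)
The plan is to exhibit an explicit point (or small disk) lying in the Fatou set, namely a neighbourhood of $0$ itself, by showing that the family $\{F_n^{n_1}\}$ (and hence $\{F_n\}$, after precomposing by the fixed finite composition $F_{n_1}$) is normal there. The key structural fact to exploit is that each $f_{\lambda_n}$ maps the left half-plane $\{\Re(z) < 0\}$ into the disk $\{|w| < \lambda_n\}$, and more usefully maps any half-plane $\{\Re(z) < c\}$ into a bounded disk. So the real obstacle is to produce a \emph{forward-invariant} bounded region containing $0$; once we have that, normality is immediate by Montel, since the $F_n^{n_1}$ omit (say) a fixed neighbourhood of a point not in that region, or simply because they form a uniformly bounded family.

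First I would use the hypothesis $F_n^{n_1}(0) < 1$ for all $n$ to control the forward orbit of $0$ on the real line: since each $\lambda_n > 0$ and $\lambda_n e^z > 0$ for real $z$, the orbit $x_n := F_n^{n_1}(0)$ satisfies $0 < x_n < 1$ for every $n$ (the lower bound because $\lambda_n e^{x_{n-1}} > 0$, starting from $x_0 = 0$), and also $x_n = \lambda_n e^{x_{n-1}} < \lambda_n e$, which forces $\lambda_n < e$ for every $n > n_1$; in particular the tail of the sequence is bounded, say $\lambda_n \leqslant M'$. Next I would pick a vertical strip/rectangle $R = \{z : -A < \Re(z) < 1,\ |\Im(z)| < B\}$ with $A, B > 0$ chosen large enough that $f_{\lambda}(R) \subset R$ for every $\lambda \in (0, M']$: indeed for $z \in R$ one has $|f_\lambda(z)| \leqslant M' e^{\Re(z)} < M' e$, so $\Re(f_\lambda(z))$ and $\Im(f_\lambda(z))$ are both bounded by $M' e$ in absolute value, and $\Re(f_\lambda(z)) = \lambda e^{\Re z}\cos(\Im z) > -M' e$, so choosing $A = B = M' e$ (and noting $\Re(f_\lambda(z)) < 1$ may fail) — here I would instead iterate the observation once: $f_\lambda(R) \subset D(0, M'e)$, and then simply take the invariant region to be a large disk $D := D(0, M' e)$, which satisfies $f_\lambda(D) \subset D$ for all $\lambda \in (0, M']$ provided $M' e \cdot e^{M'e} \leqslant M'e$ — which is false, so a single disk is too crude.

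The correct fix, and the genuinely delicate step, is that a single universal invariant domain need not exist, so one must use the \emph{specific} orbit of $0$. I would argue as follows: consider the domain $\Omega_n := F_n^{n_1}(\mathbb{H})$ where $\mathbb{H} = \{\Re(z) < 0\}$. Since $0 \in \partial\mathbb{H}$ is mapped under $F_n^{n_1}$ to $x_n \in (0,1)$, and since $f_{\lambda_{n_1+1}}(\mathbb{H}) \subset D(0, \lambda_{n_1+1}) \subset D(0, e)$ is a bounded disk, every subsequent image $\Omega_n$ is a bounded domain; the goal is to show these shrink, or at least stay uniformly bounded and equicontinuous near $0$. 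Concretely, $f_\lambda$ has derivative $f_\lambda'(z) = \lambda e^z$, which on any set where $\Re(z)$ stays bounded above has bounded modulus, and \emph{strictly less than $1$} whenever $\Re(z) < -\log\lambda \leqslant -\log(M')$; combined with the orbit of $0$ staying in $(0,1)$, a standard distortion/Koebe argument on a small disk $D(0,r)$ shows $\{F_n^{n_1}\}$ is bounded on $D(0,r)$, hence normal there by Montel. Therefore $0 \in F(\lambda_n^{n_1})$, and since $F_n = F_{n-n_1}^{n_1} \circ F_{n_1}$ with $F_{n_1}$ a fixed entire map, normality transfers to a neighbourhood of $F_{n_1}^{-1}$-preimage of $0$, giving a nonempty Fatou set and hence $J(\lambda_n) \neq \mathbb{C}$. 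I expect the main obstacle to be making the uniform boundedness of $\{F_n^{n_1}\}$ on a fixed disk around $0$ rigorous without a clean invariant domain — the cleanest route is probably to show directly that $F_n^{n_1}$ maps a fixed small disk $D(0,r)$ into a fixed bounded set (e.g.\ by tracking that the real part of the orbit stays $< 1$ forces, via the contraction of $f_\lambda$ on left half-planes, that the whole disk image stays within a bounded region), which is exactly the kind of estimate the theorem's hypothesis is tailored to provide.
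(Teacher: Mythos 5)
There is a genuine gap, and in fact the plan is aimed at the wrong target set. The hypothesis $F_n^{n_1}(0)<1$ only controls points whose real part is $\leqslant 0$: from $\Re(f_\lambda(z))=\lambda e^{\Re z}\cos(\Im z)\leqslant f_\lambda(\Re z)$ and monotonicity of $f_\lambda$ on $\mathbb{R}$ one gets $\Re(F_n^{n_1}(z))\leqslant F_n^{n_1}(\Re z)<F_n^{n_1}(0)<1$ for $\Re z<0$, but for a point with $\Re z=\varepsilon>0$ the bound is $F_n^{n_1}(\varepsilon)$, about which the hypothesis says nothing. A disk $D(0,r)$ always contains such points, and they can escape: for the sequence $\lambda_n=\frac{1}{e}e^{\frac{1}{n-1}}\left(1-\frac{1}{n}\right)$ used right after this lemma in the paper one has $F_n(0)=1-\frac{1}{n}<1$, while a short computation with $d_n:=F_n(\varepsilon)-F_n(0)$ (namely $d_{n+1}=\left(1-\frac{1}{n+1}\right)\left(e^{d_n}-1\right)$, so $nd_n$ is nondecreasing and tends to $\infty$) shows every real $\varepsilon>0$ eventually exceeds $1$ and then tends to $\infty$. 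So on any neighbourhood of $0$ some orbits escape and some stay in $(0,1)$; no subsequence of $F_n^{n_1}$ can converge locally uniformly there, i.e.\ $0$ itself lies in the Julia set. Hence your stated goal, ``$0\in F(\lambda_n)$ via boundedness of $\{F_n^{n_1}\}$ on a fixed disk $D(0,r)$,'' is not provable from the hypothesis, and the Koebe/distortion route you invoke also has no footing: $F_n^{n_1}$ need not be univalent on a fixed disk, and a derivative bound along the single orbit of $0$ does not control the family on a neighbourhood.

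The correct open set is the one you brush past when you introduce $\Omega_n=F_n^{n_1}(\mathbb{H})$: the open left half-plane itself (not a neighbourhood of its boundary point $0$). The paper's proof is exactly the one-line estimate above: all forward images of $P_-=\{\Re z<0\}$ under the shifted compositions stay in $\{\Re z<1\}$, so they omit a half-plane and Montel gives normality; then the preimage of $P_-$ under the fixed finite composition $F_{n_1}$ is a nonempty open subset of the Fatou set. (An equally short variant along your half-plane idea: by induction $|F_n^{n_1}(z)|<F_n^{n_1}(0)<1$ for all $z\in P_-$, since $|f_\lambda(w)|=\lambda e^{\Re w}$; this gives uniform boundedness directly.) Your proposal names this missing estimate as ``the main obstacle'' and leaves it open, but it is the entire content of the lemma, and the surrounding scaffolding (invariant disks, Koebe on $D(0,r)$, $\lambda_n<e$ from $x_n<\lambda_n e$) either fails or does not substitute for it.
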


\begin{proof}
Consider an arbitrary point $z$ such that $\Re(z) < 0$. We have
$$
    \Re(f_{\lambda_n} (z)) = \cos(\Im(z)) \lambda_n e^{\Re(z)} \leqslant \lambda_n e^{\Re(z)} =  f_{\lambda_n} (\Re(z))
$$
which yields
$$
    \Re(F_n^{n_1} (z)) \leqslant F_n^{n_1} (\Re(z)) < F_n^{n_1} (0) < 1
$$
So after index $n_1$ all iterations of the entire half-plane $P_{-} = \{z: \Re(z) < 0 \}$ omit more than three points, and thus form a normal family by Montel's theorem. Then the appropriate preimage (by the first $n_1$ compositions) of any subset of the half-plane lies in the Fatou set.
\end{proof}

\begin{proposition}

We can choose a constant $C$ such that for $\lambda_n \leqslant \frac{1}{e} + \frac{C}{n^2}$ we have $J(\lambda_n) \neq \mathbb{C}$. 
\end{proposition}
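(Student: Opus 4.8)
The plan is to apply Lemma~\ref{lem}: it suffices to produce an index $n_1$ for which $F_n^{n_1}(0) < 1$ for every $n$. A first reduction: since each $f_{\lambda_k}$ is increasing on $\mathbb{R}$ and $\lambda \mapsto \lambda e^z$ is increasing in $\lambda$, an immediate induction on $n$ shows that if $0 < \lambda_k \le \tilde\lambda_k := \frac1e + \frac{C}{k^2}$ for all $k$, then $F_n^{n_1}(0) \le \widetilde F_n^{n_1}(0)$, where $\widetilde F_n^{n_1}$ denotes the corresponding composition built from the $\tilde\lambda_k$. So it is enough to show that the orbit $\tilde x_n := \widetilde F_n^{n_1}(0)$ of $0$ under the extremal sequence stays below $1$, and I work with this sequence from now on.

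Next I would change variables to $u_n = 1 - \tilde x_n$ (so that $\tilde x_n < 1 \iff u_n > 0$), which turns the recursion into $u_{n+1} = 1 - (1 + \tfrac{Ce}{k^2}) e^{-u_n} =: g_k(u_n)$ with $k = n_1 + n + 1$; note $u_0 = 1$, that $g_k$ is increasing, and that $u_n \le 1$ for every $n$. The one computational input is the elementary bound
\[
g_k(v) \ \ge\ v - \tfrac{v^2}{2} - \tfrac{Ce}{k^2} \qquad (v \ge 0),
\]
which reduces, after expanding, to the inequality $1 - e^{-v} \ge v - \tfrac{v^2}{2}$ for $v \ge 0$ (the difference vanishes to second order at $0$ and has nonnegative second derivative). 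Conceptually this displays the competition that drives the proof: the $-v^2/2$ term is the parabolic contraction at the neutral fixed point $1$, responsible for the $\sim 2/n$ decay of the unperturbed orbit of $0$, and it must out-race the perturbation $Ce/k^2$.

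The core of the argument is a barrier (subsolution): take $v_n = \frac{1}{n_1 + n}$ and verify that $g_k(v_n) \ge v_{n+1}$, equivalently $v_n - v_{n+1} \ge \tfrac{v_n^2}{2} + \tfrac{Ce}{k^2}$. Writing $m = n_1 + n$, this is $\frac{1}{m(m+1)} \ge \frac{1}{2m^2} + \frac{Ce}{(m+1)^2}$, which clears to the single condition $Ce \le \frac{m^2 - 1}{2m^2}$. Hence: fix any $C$ with $0 < C < \frac{1}{2e}$, then choose $n_1$ large enough that $\frac{n_1^2 - 1}{2n_1^2} \ge Ce$; since $\frac{m^2-1}{2m^2}$ increases in $m$, the inequality $g_k(v_n)\ge v_{n+1}$ then holds for all $n$. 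As $u_0 = 1 \ge v_0$ and $g_k$ is increasing, induction yields $u_n \ge v_n > 0$ for all $n$, i.e. $\tilde x_n < 1$, and Lemma~\ref{lem} completes the proof.

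I expect the only real obstacle to be locating the right barrier and checking that the constants balance: the argument closes precisely because the parabolic term $v_n^2/2$ is of the same order $m^{-2}$ as the perturbation $Ce\,m^{-2}$, so the scale $v_n \asymp 1/m$ (matching the natural decay rate of the parabolic orbit) is forced, and one then has to squeeze $C$ below $\frac{1}{2e}$. The remaining ingredients — the monotone comparison with the extremal sequence, the change of variables, and the two elementary inequalities — are routine. I would also add a remark that $C$ is not optimized here; a finer barrier such as $\frac1m + \frac{c\log m}{m^2}$ would presumably reach $C = \frac{1}{2e}$, but the statement only requires the existence of some admissible $C$.
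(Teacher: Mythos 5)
Your proof is correct and takes essentially the same route as the paper: both reduce the statement to Lemma \ref{lem} via monotone comparison on the real line and show that the orbit of $0$ stays a distance of order $1/n$ below $1$, because the parabolic contraction of size $v^2/2$ beats the perturbation $Ce/n^2$. The only difference is technical bookkeeping: the paper exhibits an explicitly solvable majorant sequence $\lambda_n = \frac{1}{e} e^{\frac{1}{n-1}}\left(1 - \frac{1}{n}\right)$, whose orbit is exactly $1 - \frac{1}{n}$, and then compares parameters, whereas you verify the same $1/(n_1+n)$ barrier directly by induction, which in passing makes the admissible range $C < \frac{1}{2e}$ explicit.
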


\begin{proof}
Let us first check by induction that for $\lambda_n = \frac{1}{e} e^{\frac{1}{n-1}}(1 - \frac{1}{n})$ we get
$$
    F_n(0) = 1 - \frac{1}{n}.
$$
For $n = 1$ both sides of the equation are of course $0$. Now, assuming the equality for $n$, we get
$$
    F_{n+1} (0) = f_{\lambda_{n+1}} (F_n(0)) = \lambda_{n+1} e^{F_n(0)} = \frac{1}{e} e^{\frac{1}{n}} (1 - \frac{1}{n+1}) e^{1 - \frac{1}{n}} = 1 - \frac{1}{n+1}.
$$
This concludes the induction. Thus lemma \ref{lem} implies the Fatou set is non-empty for $\lambda_n \leqslant \frac{1}{e} e^{\frac{1}{n-1}}(1 - \frac{1}{n})$. It suffices now to pick an appropriate constant $C$.
Consider $C > 0$ such that

\begin{dmath}
    \frac{1}{e} + \frac{C}{n^2} < \frac{1}{e} + \frac{e^{-1}- \frac{1}{(n+1)e}}{2n^2} = \frac{1}{e}(1 + \frac{1}{2n^2} - \frac{1}{2n^2(n+1)}) = \frac{1}{e}(1 + \frac{1}{n} + \frac{1}{2n^2})(1 - \frac{1}{n+1}) < \frac{1}{e} e^{\frac{1}{n}}(1 - \frac{1}{n+1})
\end{dmath}
This concludes the proof.

\end{proof}

One can also choose $C$ such that for $\lambda_n = \frac{1}{e} + \frac{C}{n^2}$ all iterates of $0$ starting from any index converge to infinity, and thus Lemma \ref{lem} cannot be applied. Same is of course true for $\lambda_n = \frac{1}{e} + \frac{1}{n^p}$ for $p < 2$. This is roughly speaking a consequence of the fact that the distance between the $n$-th iterate of $0$ under $e^{z-1}$ and the fixed point at $1$ is of rate $\frac{1}{n}$.

One might be tempted to assume that the unboundedness of the trajectory of $0$ dictates whether the Julia set is the whole plane. The following theorem gives a counterexample, that is the Fatou set can be non-empty even for sequences where all iterates of $0$ beginning from any index diverge to infinity.

\begin{theorem}\label{przyk}
There exists a sequence $\{\lambda_n \}$ such that $ \forall_{n_1} F^{n_1}_n (0) = \infty$, but $J(\lambda_n) \neq \mathbb{C}$.
\end{theorem}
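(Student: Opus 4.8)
\emph{Proof proposal.}
The plan is to build $(\lambda_n)$ inductively, block by block, alternating long ``runs'' on which $\lambda_n$ equals a fixed value $\lambda_0<\tfrac1e$ with short ``bursts'' on which $\lambda_n$ slightly exceeds $\tfrac1e$; the run lengths $N_k$ and the burst parameters $\mu_k\to\tfrac1e^{+}$ are fixed along the way so as to meet two opposing demands — that $F^{n_1}_n(0)\to\infty$ for every $n_1$, which needs the bursts ``strong'' relative to the preceding run, and that a small open set off the real axis survive the dynamics, which wants them ``weak''. The reconciliation exploits that these phenomena live on different parts of the plane: the orbit of $0$ is real, while the surviving set will sit at positive imaginary part. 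Write $p<1<q$ for the two real fixed points of $f_{\lambda_0}$.

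First, the escape of $0$. On a run the real orbit of $0$ converges geometrically to $p$, so after a run of length $N_k$ it is within $O(\rho^{N_k})$ of $p$, with $\rho=\lambda_0e^{p}<1$, \emph{uniformly in the starting index} $n_1$. Choosing the $k$th burst long enough, with $\mu_k$ close enough to $\tfrac1e$ and $N_k$ large enough, so that iterating $f_{\mu_k}$ across the burst carries the whole interval within $O(\rho^{N_k})$ of $p$ past $q$, forces $F^{n_1}_m(0)>q$ at the burst's end for every admissible $n_1$. Once the real orbit exceeds $q$, every later $f_{\lambda_n}$ (all $\lambda_n\ge\lambda_0$) is strictly increasing there with no fixed point above $q$, so $F^{n_1}_n(0)\to\infty$ by the standard monotone argument; a starting index inside a burst only helps, by monotonicity of the real maps. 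Since $\mu_k\to\tfrac1e$ the sequence is not one of those already known to give $J=\mathbb C$, and since some $\lambda_n<\tfrac1e$ Proposition~\ref{corjed} does not apply — consistent with $J\ne\mathbb C$.

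Second, $J(\lambda_n)\ne\mathbb C$. Follow a small disc $U$ about a point $z_0$ with $0<\Im z_0<2\pi$, deep in the immediate basin of $p$, under $G_n:=F^{n_0}_n$ for a suitable $n_0$. On a run $G_n(U)$ contracts toward $p$ with its imaginary extent shrinking, so at the start of a burst $U$ has been squeezed into a tiny, almost-real disc near $p$ — but with imaginary part $\ne0$. While the burst drives the real part up, the multiplier $|\lambda_ne^{z}|$ becomes large and amplifies that nonzero imaginary part; by $2\pi i$-periodicity of $\exp$, once the imaginary part reaches $\approx\pi$ the next iterate lands at a large \emph{negative} real value and the following one near $0$, so $U$ is reflected back into the basin of $p$ instead of escaping past $q$. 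The real orbit of $0$, having imaginary part identically $0$, is never reflected and runs off to $+\infty$ — this is the asymmetry making both properties compatible. If the $N_k$ and burst lengths are tuned so that one (burst $+$ run) cycle acts as a net contraction on $U$, one verifies that $\{G_n|_U\}$ is normal, so $U$ lies in the Fatou set; note that along the ends of the runs $G_n|_U\to p\in\mathbb R$, in accordance with Proposition~\ref{cora}.

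The main obstacle is exactly this geometric bookkeeping for $U$. A burst strong enough to drive the real orbit of $0$ past $q$ also sends $G_n(U)$ on a long excursion during which $U$ expands, and after each reflection the imaginary scale of the returning disc is \emph{smaller} than before, so the next excursion must climb to a still larger real part before the reflection triggers; one must therefore shrink this imaginary scale and lengthen the following run in a coordinated way at each stage, and — the real danger — keep $G_n(U)$ from ever reaching imaginary extent $\ge2\pi$, at which point the next iterate would wrap around the puncture at $0$ and destroy normality. Making this induction close, so that $F^{n_1}_n(0)\to\infty$ for all $n_1$ while $\{G_n|_U\}$ stays normal, is the heart of the proof, and may well require replacing the disc $U$ by a domain of more carefully adapted shape.
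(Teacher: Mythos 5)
The escape half of your plan is fine (monotonicity on the real line handles every starting index, exactly as in the paper), but the second half --- that the disc $U$ survives in the Fatou set --- is not proved, and you concede as much when you call the induction ``the heart of the proof''. The gap is not bookkeeping; it is the mechanism itself. Once a burst launches $G_n(U)$ on an escape excursion, the disc is carried along the real escaping orbit with per-step expansion of order $e^{\Re}$ of the current iterate. At the step where the imaginary offset of the disc reaches $\approx\pi$, the next image is thrown to the far left half-plane with its diameter multiplied by that same enormous factor, and its location modulo $2\pi i$ is the fractional part of an astronomically large, extremely sensitive quantity. Unless you can simultaneously force that image's imaginary extent below $2\pi$ \emph{and} control its position mod $2\pi i$, the subsequent iterate (the tiny set near $0$) wraps around $0$ and meets the real line; then the ``reflection'' is no longer a coherent return to the basin of $p$, the iterates of $U$ no longer omit $\mathbb{R}$ (so no Montel-type normality argument is available), and part of $U$ shadows the escaping real orbit. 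You would have to close these conditions at infinitely many reflections, with the burst data $(\mu_k, N_k)$ already constrained by the requirement that the real orbit of $0$ cross past $q$, and with the offset/diameter of each returning disc produced by the previous, badly distorted passage. Nothing in the proposal shows this infinite system of constraints is consistent, so as written the second property is unestablished.

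The paper avoids the difficulty by never letting the off-axis set go on an excursion at all. It takes long runs of $\lambda_n=\tfrac1e$ (parabolic dynamics of $e^{z-1}$), proves a lemma that $\arg\bigl(f^n(z)-f^n(0)\bigr)$ stays bounded below by some $\alpha>0$ for $z$ in the chosen set $V$, and at the sparse indices $M_k$ picks $\lambda_{M_k}>\tfrac1e$ so that the orbit of $0$ is sent \emph{exactly} to the parabolic fixed point $1$. Since the Julia set of $e^{z-1}$ is tangent to $\mathbb{R}$ at $1$, the image of $V$ --- which at that moment has small imaginary part but sits at angle at least $\alpha$ above the orbit of $0$ --- lands in the parabolic basin and is pushed back into $\{\Re z<1\}$ before the next adjustment; hence all iterates of $V$ omit the real line and normality follows from Montel, while any orbit of $0$ started at a later index arrives strictly above $1$ at a reset and escapes. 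If you want to complete your argument you either need a genuinely new control of the wrap-around at each reflection, or you should adopt this idea of keeping the off-axis set permanently in the bounded parabolic region rather than letting it escape and return.
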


We shall require the following lemma.

\begin{lemma}
Let $f(z) = e^{z-1}$, then for every open set $V$ satisfying $\overline{V} \subset S:= \{z \in \mathbb{C}: 0 < \Re (z) < 1, 0 < \Im(z) < \frac{1}{2}  \}$ we have $\inf\limits_{z \in V, n \in \mathbb{N}}   \arg (f^n(z) - f^n (0)) > 0$, where $arg$ is the branch of the argument taking values from $[0, 2 \pi ]$.
\end{lemma}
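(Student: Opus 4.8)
The plan is to reduce the statement to a one-step recursion for $\arg(f^n(z)-f^n(0))$ and to show this quantity is non-decreasing in $n$, so that its infimum over $n$ is realized at the smallest index, where uniform positivity on the compact set $\overline V$ is immediate. First I would establish that $S$ is forward invariant under $f$: for $z=x+iy$ with $0<x<1$, $0<y<\frac12$ we have $f(z)=e^{x-1}(\cos y+i\sin y)$, and since $e^{x-1}<1$, $0<\cos y<1$, and $0<\sin y<y<\frac12$, both $\Re f(z)=e^{x-1}\cos y\in(0,1)$ and $\Im f(z)=e^{x-1}\sin y\in(0,\frac12)$, i.e. $f(z)\in S$. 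Consequently $f^n(\overline V)\subset S$ for all $n$, so in particular $\Im f^n(z)\in(0,\pi)$ throughout; note that, unlike in the proof of Theorem \ref{przyk}, the convergence $f^n(z)\to1$ will not actually be needed here, only this confinement to the strip.

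Next, set $a_n=f^n(0)\in(0,1)$, $z_n=f^n(z)$ and $d_n=z_n-a_n$, so that $\Im d_n=\Im z_n\in(0,\pi)$ and hence $d_n\neq0$, $\arg d_n\in(0,\pi)$. Since $f(a_n)=e^{a_n-1}$ is real and positive, the identity $d_{n+1}=f(z_n)-f(a_n)=e^{a_n-1}(e^{d_n}-1)$ gives $\arg d_{n+1}=\arg(e^{d_n}-1)$. Writing $e^{d}-1=d\,h(d)$ with $h(d)=\int_0^1 e^{td}\,dt$, the crucial pointwise fact is that $\Im h(d)=\int_0^1 e^{t\Re d}\sin(t\,\Im d)\,dt>0$ whenever $\Im d\in(0,\pi)$ (since then $0<t\,\Im d<\pi$ for $t\in(0,1)$), so $h(d)\neq0$ and $\arg h(d)\in(0,\pi)$. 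Thus $\arg d_{n+1}$ agrees with $\arg d_n+\arg h(d_n)$ modulo $2\pi$, and since both terms lie in $(0,\pi)$ the unreduced sum lies in $(0,2\pi)$; but $\Im(e^{d_n}-1)=e^{\Re d_n}\sin(\Im d_n)>0$ forces $\arg(e^{d_n}-1)\in(0,\pi)$, which excludes a $2\pi$ wrap-around and yields $\arg d_{n+1}=\arg d_n+\arg h(d_n)>\arg d_n$. Hence $(\arg d_n)_{n\ge1}$ is strictly increasing (and automatically stays in $(0,\pi)$).

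Finally, since $f(0)=e^{-1}$ we have $\arg d_1=\arg(e^{z-1}-e^{-1})=\arg(e^{z}-1)$, and on the compact set $\overline V\subset S$ the function $z\mapsto e^{z}-1$ has imaginary part $e^{\Re z}\sin(\Im z)$ bounded below by a positive constant, so $z\mapsto\arg(e^{z}-1)$ is continuous with a strictly positive minimum $c_0$ on $\overline V$. Combined with the monotonicity from the previous step this gives $\arg(f^n(z)-f^n(0))=\arg d_n\ge c_0$ for all $z\in V$ and all $n\ge1$ (and, should the index $n=0$ be admitted, $\arg(z-0)=\arg z$ is likewise bounded below by a positive constant on $\overline V$, as $\Re z,\Im z>0$ there), whence $\inf_{z\in V,\,n}\arg(f^n(z)-f^n(0))>0$. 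The only point requiring any care is the branch bookkeeping in passing from $\arg(d_nh(d_n))$ to $\arg d_n+\arg h(d_n)$, which is settled precisely by the sign of $\Im(e^{d_n}-1)$; I do not expect a genuine obstacle, and the forward invariance of $S$ is the only routine computation to double-check.
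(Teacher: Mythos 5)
Your proof is correct. It shares the paper's underlying idea --- show that $\arg(f^n(z)-f^n(0))$ does not decrease along the orbit and bound it below at the start --- but the mechanism is genuinely different and in some respects tighter. The paper proves the monotonicity only \emph{locally}: it establishes $\arg(e^{z-1}-e^{w-1})>\arg(z-w)$ for $z$, real $w$ sufficiently close to the parabolic point $1$, via a slope comparison checked with Taylor expansions (with a separate case $\Re(z-w)<0$), and then invokes the dynamics --- all orbits of $0$ and of $V$ eventually enter and never leave that neighbourhood of $1$ --- leaving the uniform control of the finitely many initial iterates implicit. You instead prove monotonicity at \emph{every} step, with no appeal to convergence to $1$ at all: the forward invariance of the box $S$ keeps $\Im d_n\in(0,\pi)$ where $d_n=f^n(z)-f^n(0)$, and the exact factorization $d_{n+1}=e^{a_n-1}\,d_n\int_0^1 e^{t d_n}\,dt$, with $\Im\int_0^1 e^{t d_n}\,dt>0$ whenever $\Im d_n\in(0,\pi)$, gives $\arg d_{n+1}>\arg d_n$ outright; your explicit wrap-around check (via $\Im(e^{d_n}-1)>0$) settles the branch bookkeeping that the paper never addresses. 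The base case is then a routine compactness bound for $\arg(e^z-1)$ on $\overline V$. What the paper's route buys is a statement valid for arbitrary points near $1$ (not only orbits confined to $S$), which is closer to how the lemma is actually used in the construction of Theorem \ref{przyk}; what your route buys is a cleaner, fully global and quantifier-complete argument that avoids Taylor estimates, the case split, and the unstated initial-segment step. Both establish the lemma as stated.
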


In other words, the lemma says that we can fix an angle $\alpha$, such that the angle between the real line and a line segment $[f^n(0), f^n(z)]$ is at least $\alpha$, for any iterate $n$ and any point $z$ from $V$. 

\begin{proof}
In order to arrive at the proof we shall show that there exists a neighbourhood of $1$, such that for any two points $z, w$ in it, satisfying $w \in \mathbb{R}$, we have $\arg (e^{z-1} - e^{w-1}) > \arg (z - w)$. Indeed this is enough, since all iterates of both $0$ and the set $V$ will eventually land in the neighbourhood of $1$ under iteration of $e^{z-1}$, and never leave that neighbourhood. This means that after a certain number of iterations, the expression $\arg (f^n(z) - f^n(0))$ will be increasing with respect to $n$ for all $z$, thus indeed the infimum cannot be $0$. \\

Take a point $a \in \mathbb{R} \cap S$ and $z \in S$, let $x = \Re (z), y = \Im (z)$. Note that if $\Re(z-a) < 0$ then also $\Re(f^n (z) - f^n (a)) < 0$ for all iterations, and the argument of this difference is at least $\frac{\pi}{2}$. Assume then that $\Re(z-a) > 0$ and $\Re(e^{z-1} - e^{a-1}) > 0$. We want to have (for $a,z$ sufficiently close to $1$):

$$
\frac{\Im(z)}{\Re(z - a)} < \frac{\Im (e^{z-1})}{\Re (e^{z-1} - e^{a-1})}
$$
which can be rewritten as
$$
\frac{y}{x -a} < \frac{e^{x-1} \sin (y)}{e^{x-1} \cos(y) - e^{a - 1}}.
$$

Denote $x' = x - a$, we need to prove

$$
\frac{y}{x'} < \frac{\sin(y)}{\cos(y) - e^{-x'}}
$$
which is equivalent to
$$
y(\cos(y) - e^{-x'}) < \sin (y) x'.
$$

Passing to Taylor series will now be enough to see that the last inequality is true:

$$
y\left((1 - \frac{y^2}{2} + \frac{y^4}{4!} - ...) - (1 - x' + \frac{{x'}^2}{2} - ....)\right) <x' (y - \frac{y^3}{3!} + ...)
$$
which can be rewritten as
$$
x' - \left(\frac{y^2}{2} - ...\right) - \left(\frac{{x'}^2}{2} - ...\right) < x' - \frac{x' y^2}{3!} + ....
$$

Last inequality is true for $x',y$ sufficiently close to $0$. Since $y = \Im (z)$ and $x' = \Re (z) - a$ both converge to $0$ under iteration, this concludes the proof.

\end{proof}

\begin{proof}[Proof of theorem \ref{przyk}]
Let us pick an open set $V$ such that the assumptions of the previous lemma are satisfied. We shall build the sequence $\{\lambda_n \}$ in a way that $V \subset F (\lambda_n)$. The sequence we choose will contain long strings of $\frac{1}{e}$, that is $\lambda_n = \frac{1}{e}$ for $n \in [M_{k} + 1, M_{k+1} - 1]$, where the numbers $M_k$ shall be set during the construction. The values of $\lambda_{M_k}$ we will pick larger than $\frac{1}{e}$, to ensure that the iterates on the real line escape to infinity. Let us now present the construction. \\

Let again $f(z) = e^{z-1}$, then for any $\frac{\pi}{2} > \alpha_1 > 0$ there exists $\varepsilon$ such that $$\{z \in \mathbb{C} : \Re (z-1) > 0,  \Im(z) < \varepsilon, \arg(z-1) > \alpha_1 \} \subset F (f).$$ Indeed, it is well known that the Julia set of $f$ is tangent to the real line at $1$, thus for any angle $\alpha_1$ we can choose an appropriate $\varepsilon$. Now for our set $V$ let $\alpha_1$ be a number satisfying $$\inf\limits_{z \in V, n \in \mathbb{N}}   \arg (f^n(z) - f^n (0)) > \alpha_1 > 0,$$ the existence of it is given by our previous lemma. We pick $\varepsilon$ for $\alpha_1$ as we discussed above. Finally we can set $M_1$ large enough so that $$f^{M_1 - 1} (V) \subset \{z: \Im(z) < \varepsilon \}.$$ Now we choose $\lambda_{M_1}$ so that $F^{M_1}(0) = 1$. This means that $\lambda_{M_1} > \frac{1}{e}$, since otherwise we would have $F^{M_1}(0) < 1$. Moreover because of the choices of $\alpha_1, \varepsilon$, we know that $F^{M_k}(V)$ omits the autonomous Julia set $J(e^{z-1})$, since $F^{M_k}(V)$ lies outside of the angle $\alpha_1$. Thus there is a natural number $k_1$ such that we have $$f^{k-1}(F^{M_1})(V) = F^{M_1 + k_1} (V) \subset \{\Re (z) < 1 \}.$$ This is a consequence of the fact that in general points in the Fatou set of $e^{z-1}$ are almost uniformly moved to $\{\Re (z) < 1 \}$. \\
Now it is enough to repeat this construction for the set $F^{M_1 + k_1} (V)$, by picking sufficiently large $M_2$ for the new angle $\alpha_2$ (which maybe depends on the position of $F^{M_1 + k_1} (V)$), and an appropriate $\lambda_{M_2}$ so that $F_{M_{n+2}}^{M_{n+1}+1}(0) = 1$ and $F^{M_1 + M_2} (V) \subset F(e^{z-1})$. This way we build our sequence inductively by picking all $M_n$ and $\lambda_{M_n}$. \\
It remains to be shown that this sequence satisfies our requirements. The set $V$ is in the Fatou set by Montel's theorem, since by the construction all its iterates omit the entire real line. \\
The iterates of $0$ starting from any index converge to infinity. Indeed, if we start from say $k \in [M_n, M_{n+1})$, then $$F_{M_{n+2}}^{k}(0) > 1,$$ because $$F_{M_{n+2}}^{M_{n+1}+1}(0) = 1$$ and $$F^k_{M_{n+1}}(0) > 0.$$ Since the iterates of $f^n(z)$ itself for any point $z > 1$ converge to infinity, then of course same is true for the non-autonomous sequence we constructed (where we iterate either $f$ or $f$ multiplied by constant bigger than $1$). This concludes the proof.

\end{proof}

Let us point out that the Fatou set produced in the above construction differs significantly from the autonomous Fatou set for $e^{z-1}$. This can be seen by the fact that the entire real line along with all its preimages has to be in the Julia set. We now turn to another construction, this time we would like the Fatou set to be empty, but the sequence $\lambda_n$ to be as cloes to $\frac{1}{e}$ as possible.
 
\begin{proposition}
We construct a sequence $\lambda_n \searrow \frac{1}{e}$ such that the Julia set is the whole plane. The sequence we pick satisfies $\limsup\limits_{n \rightarrow \infty} (\lambda_n - e^{-1})n^{\frac{1}{2}} = const$. 
\end{proposition}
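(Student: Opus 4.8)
The plan is to make $(\lambda_n)$ piecewise constant on very rapidly growing blocks, apply Proposition \ref{corjed}, and read off the $\limsup$. Fix $c>0$ and integers $0=M_0<M_1<M_2<\cdots$ growing fast (say $M_{k+1}\ge M_k^3$; the precise rate needed is fixed in the course of the proof), and set
\[
\lambda_n=\tfrac1e+a_k\quad\text{for }M_k<n\le M_{k+1},\qquad a_k:=c\,M_{k+1}^{-1/2}.
\]
The $a_k$ strictly decrease to $0$, so $\{\lambda_n\}$ is non-increasing with $\lambda_n\searrow\frac1e$ (a tiny affine interpolation inside each block makes it strictly decreasing and changes nothing below); and on the $k$-th block $(\lambda_n-e^{-1})\sqrt n=a_k\sqrt n$ ranges over $(\,c\sqrt{M_k/M_{k+1}},\,c\,]$ and equals $c$ at $n=M_{k+1}$, so $\limsup_{n\to\infty}(\lambda_n-e^{-1})n^{1/2}=c$. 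Since $\frac1e\le\lambda_n\le\frac1e+a_1=:M$, Proposition \ref{corjed} will yield $J(\lambda_n)=\mathbb C$ as soon as one checks: (i) $F_n(x)\to\infty$ for every $x\in\mathbb R$; and (ii) there is no open set on which $F_n\to1$ almost uniformly.

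For (i): on the $k$-th block one iterates the single map $g_k(z)=(\tfrac1e+a_k)e^z$, which has no real fixed point (in the variable $u=z-1$ it is the saddle–node form $u\mapsto u+a_ke+\tfrac12u^2+\cdots$), so a real orbit reaches $u\approx-1$ within $O(1)$ steps, crosses the bottleneck near $u=0$ in $\asymp a_k^{-1/2}$ further steps, and thereafter escapes to $\infty$ (once $x>1$ one has $\lambda_ne^x\ge e^{x-1}>x$). As the block length $M_{k+1}-M_k$ is vastly larger than $a_k^{-1/2}$ — equivalently, the $k$-th block contributes $\asymp a_k(M_{k+1}-M_k)\to\infty$ to $\sum_n(\lambda_n-e^{-1})$ — every real orbit leaves a neighbourhood of $(-\infty,1]$ within some block and then tends to $\infty$; hence $F_n(x)\to\infty$ for all $x\in\mathbb R$.

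Now (ii), which is the heart. Suppose $F_n\to1$ almost uniformly on an open set $U$; replacing $U$ by the Fatou component $\Omega\supseteq U$ and invoking the identity principle, $F_n\to1$ locally uniformly on $\Omega$, so for a small closed disc $K=\overline{D(z_0,\delta)}\Subset\Omega$ we have $\delta_n:=\sup_{z\in K}|F_n(z)-1|\to0$. I want a \emph{flushing estimate} showing each block expels $F_{M_k}(K)$ away from $1$. Because $\tfrac1e+a_k>\tfrac1e$ is a constant, Theorem \ref{zdunik} (applied to the constant sequence $\lambda_n\equiv\tfrac1e+a_k$) gives $J(g_k)=\mathbb C$, so $g_k$ has empty Fatou set; more precisely, within distance $\tfrac12$ of $1$ the map $g_k$ has no fixed point other than the \emph{spiral-repelling} ghosts $1\pm i\sqrt{2a_ke}$ (multiplier of modulus $\sqrt{1+2a_ke}>1$). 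Consequently a set lying within $o(1)$ of $1$ must acquire a point at distance $\ge1$ from $1$ after at most $\asymp a_k^{-1/2}+a_k^{-1}\log\bigl(1/\operatorname{diam}F_{M_k}(K)\bigr)$ iterates of $g_k$ — the first term from the generic rightward drift, the second from the worst case of being anchored near a ghost. By Bloch's theorem $\operatorname{diam}F_{M_k}(K)\ge c_0\,|F_{M_k}'(z_0)|=c_0\prod_{j\le M_k}|F_j(z_0)|$, and since $F_j(z_0)\to1$ a Cesàro argument gives $\log\bigl(1/\operatorname{diam}F_{M_k}(K)\bigr)\le o(M_k)$; as $a_k^{-1}=M_{k+1}^{1/2}/c$, the whole bound is $\le M_{k+1}^{1/2}\cdot o(M_k)/c=o(M_{k+1}-M_k)$ once $(M_k)$ grows fast enough. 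Thus for all large $k$ some $n\in(M_k,M_{k+1}]$ has $\delta_n\ge1$, contradicting $\delta_n\to0$. So (ii) holds, and with (i) Proposition \ref{corjed} gives $J(\lambda_n)=\mathbb C$.

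The hard part will be making the flushing estimate rigorous and, above all, \emph{uniform over all possible Fatou components $\Omega$}. Two things must be nailed down: (a) the quantitative "no near-$1$ fixed point forces expulsion in $\asymp a_k^{-1/2}$ (resp. $\asymp a_k^{-1}\log\tfrac1{\operatorname{diam}}$) steps" claim for \emph{complex} sets in the near-parabolic bottleneck — one must control the imaginary extent of $F_{M_k}(K)$ so that the drift $a_ke$ is genuinely not cancelled while the set sits near $1$, and handle the spiral-repelling ghosts $1\pm i\sqrt{2a_ke}$ carefully; and (b) the lower bound $\operatorname{diam}F_{M_k}(K)\gtrsim|F_{M_k}'(z_0)|$ together with $\log(1/|F_{M_k}'(z_0)|)=\sum_{j\le M_k}\log(1/|F_j(z_0)|)=o(M_k)$ — this is what prevents an iterated component from contracting near $\frac1e$ faster than sub-exponentially and so lets the $(M_k)$ be chosen once and for all, before $\Omega$ is known. (For the easier range $\lambda_n=\frac1e+n^{-p}$ with $p<\frac12$ mentioned earlier, the same scheme works with far more slack: the per-step push $n^{-p}$ already dominates the bottleneck scale $\asymp n^{-1/2}$, so neither the block structure nor the ghost bookkeeping is needed.)
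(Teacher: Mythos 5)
Your global architecture is sound and in fact parallels the paper's: a block-constant sequence decreasing to $\frac{1}{e}$ with block lengths fixed in advance, the reduction via Proposition \ref{corjed} to excluding almost uniform convergence to $1$ on an open set, a per-block mechanism that expels any small set near $1$, and a device (your Bloch-plus-Ces\`aro diameter bound, absorbed by the cushion $M_{k+1}\geq M_k^3$) to make the choice of blocks independent of the unknown Fatou component. The limsup bookkeeping ($a_k=cM_{k+1}^{-1/2}$, so $(\lambda_n-e^{-1})\sqrt{n}$ peaks at $c$ at the end of each block) and the escape of real orbits are fine. But the proof has a genuine gap exactly where you say it does: the ``flushing estimate'' --- that a connected set of diameter $d$ lying within $o(1)$ of $1$ must produce a point at distance $\geq 1$ from $1$ within $\asymp a_k^{-1/2}+a_k^{-1}\log(1/d)$ iterates of $g_k$ --- is asserted, not proved, and it is the entire content of the argument. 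Knowing that $J(g_k)=\mathbb{C}$ (Theorem \ref{zdunik} for a constant sequence) gives no rate; to get the stated bound you would need genuine near-parabolic estimates: linearization control at the spiral-repelling ghosts $1\pm i\sqrt{2a_ke}$ (multiplier of modulus only $1+O(a_k)$, with argument $\asymp\sqrt{a_k}$, so distances from $1$ do not grow monotonically), a comparison of the discrete map with the flow of $\dot u=a_ke+u^2/2$ in the gate region, control of where the set sits relative to \emph{both} ghosts, and uniformity of all constants in the shape and position of $F_{M_k}(K)$. None of this is routine, and as submitted the proposal is an outline whose central lemma is missing.

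It is instructive that the paper's proof is built precisely to avoid this near-parabolic analysis. It uses the same block structure (each $\frac{1}{e}+\frac{1}{n}$ repeated $M_n=48n$ times, giving the same $n^{-1/2}$ rate as your $a_k\asymp M_{k+1}^{-1/2}$), but replaces your flushing lemma by two soft estimates: the hyperbolic derivative of $f_\lambda$ on the strip $\{0<\Im z<\pi\}$ is $\geq 1$ everywhere (Schwarz) and $\geq\alpha_n=1+\frac{1}{7n}$ whenever the image leaves the substrip $\{0<\Im z<\varepsilon_n\}$, $\varepsilon_n=n^{-1/2}$, while the real-part drift forces every orbit confined near $1$ to leave that substrip at least once every $K_n=6n$ steps. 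This gives an absolute lower bound $\alpha_n^{M_n/K_n}$ on the derivative growth, independent of the hypothetical Fatou component, and the Koebe one-quarter theorem then blows any fixed disk up beyond $\mathrm{diam}(P)$, a contradiction; the threshold $\varepsilon_n\asymp\sqrt{a_n}$ is exactly the ghost scale, so the hyperbolic metric handles the ghosts implicitly. If you want to complete your route, the honest comparison is that your step (a) is the analogue of the paper's expansion lemma, and proving it from scratch would cost substantially more than the paper's Schwarz--Koebe argument; alternatively, you could simply graft the paper's hyperbolic-metric mechanism onto your blocks, at which point your Bloch/Ces\`aro uniformity device becomes unnecessary because the derivative bound is already independent of the component.
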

\begin{proof}
Let us denote 
    \begin{equation}
    P = \{z \in \mathbb{C}: \frac{1}{2} <  \Re (z) < \frac{3}{2} , 0 <  \Im (z) < \frac{1}{2} \}
    \end{equation}
    \begin{equation}
    S_n = \{z: \in \mathbb{C} : \frac{1}{2} < \Re (z) < \frac{3}{2}, 0 < \Im (z) < \varepsilon_n \} \subset P.
    \end{equation}
and consider $\lambda_{k} = \frac{1}{e} + \frac{1}{n}$ for $k \in [\sum\limits_{i = 1}^{n-1} M_i, \sum\limits_{i = 1}^{n} M_i]$, where $M_n$ will be set in the construction. That is, we shall arrive at the desired sequence by repeating each $\frac{1}{e} + \frac{1}{n}$ an appropriate $M_n$ number of times.

Let us for now fix $n$ and consider $f_{\lambda} (z) = (\frac{1}{e} + \frac{1}{n})e^z$. Let us choose $\varepsilon_n$ such that $f_{\lambda} (z) = \lambda e^z$ increases the real part by a fixed amount $\beta_n$ on $S_n$, that is 
$$
    \exists_{\beta_n > 0} \forall_{z \in S_n} \Re (f_{\lambda}(z)) > \Re (z) + \beta_n  .
$$
 It now follows that since the width of the strip $S_n$ is finite we get
$$
    \exists_{K_n} \forall_{z \in P} \exists_{k \in \mathbb{N}, 0 < k \leqslant K_n, F_{k}(z) \in \mathbb{C} \setminus S_n}.
$$
In particular, if we assume all the iterates of $f_{\lambda}$ lie in $P$, then this means that at least once every $K_n$ iterates a point from $P$ lands in $P \setminus S_{n}$. 

Now let $\varrho |dz|$ be the hyperbolic metric on $S = \{0 < \Im (z) < \pi \}$ and let the hyperbolic derivative of $f_{\lambda}$ be denoted by $|f_{\lambda}'(z)|_{\varrho}$. Then we have
\begin{equation}\label{alfa}
    \exists_{\alpha_n} \forall_{z: f_{\lambda} (z) \in P \setminus S_n} |f_{\lambda} (z) '|_{\varrho} > \alpha_n > 1
\end{equation}
and $|f_{\lambda}'(z)|_{\varrho} \geqslant 1$ for other $f_{\lambda}(z) \in P$, from the Schwarz lemma applied to $f_{\lambda}^{-1}: \mathbb{H}^+ \rightarrow S$ (where $\mathbb{H}^+$ is the upper half-plane). Combined with the previous observation, if we assume that all iterates lie in $P$, then this implies
\begin{equation}\label{alfak}
    \forall_{M > K_n} |f^M_{\lambda}  (z)'|_{\varrho} > \alpha_n^{\frac{M}{K_n}}.
\end{equation}
Recall that for any holomorphic function $f$ we have 

$$
    |f'(z)| = |f'(z)|_{\varrho} \frac{\varrho (z)}{\varrho (f(z))} 
$$

which yields the following bound

\begin{equation}\label{hyper}
    \exists_{C_n > 0} \forall_{z: f(z) \in P \setminus S_n} |f'(z)| > |f'(z)|_{\varrho} C_n
\end{equation}
since the metric $\varrho$ is bounded from above on $P \setminus S_n$ and bounded from below everywhere in $P$ (so we just take $C_n = \inf\limits_{f(z) \in P \setminus S_n} \frac{\rho(z)}{\rho(f(z))}$). This holds for all holomorphic functions, in particular any compositions of the exponential functions that are of interest to us.

Consider a sequence $\delta_n$ with $\lim\limits_{n \rightarrow \infty} \delta_n = 0$, and let us pick $M_n$ large enough so that 
\begin{equation}\label{eq:1}
    \delta_n > 4 \frac{\text{diam}(P)}{C_n \prod\limits_{k = 1}^{n} \alpha_k^{\frac{M_k}{K_k}}},
\end{equation}
and further enlarge $M_n$ by an additional $K_n$ (in particular this means now that that $M_n > K_n$ holds for all $n$).
We claim that the sequence in which each $\frac{1}{e} + \frac{1}{n}$ is repeated $M_n$ times is our desired sequence for which the Julia set is the whole plane. \par

Indeed, assume there exists an open set $V \subset F(\lambda_n)$, we can assume 
$$
    \forall_{n \geqslant 0} F_n(V) \subset P
$$
since we know that $F_n$ converge on $V$ to $1$ from Proposition \ref{corjed}. Now let $z \in V$ and let $D(z,r) \subset V$ be an open disk around $z$ with radius $r$. We denote $$V_n = F_{\sum\limits_{k = 1}^{n} M_k} (V).$$ Now let us note that the Koebe one quarter theorem combined with 
$$
    \forall_n V_n \subset P 
$$
and \eqref{eq:1} implies that
$$\forall_n r < \delta_n.$$

Indeed, let us assume $r \geqslant \delta_n$, by Koebe one quarter theorem we have

$$
    F_{\sum\limits_{k = 1}^{n} M_k}(D(z,r)) \supset D\left(F_{\sum\limits_{k = 1}^{n} M_k}(z), \frac{1}{4} r |F_{\sum\limits_{k = 1}^{n} M_k}'(z)|\right)
$$

and now \eqref{alfak}, \eqref{hyper} and \eqref{eq:1} yield

$$
    \frac{1}{4} r |F_{\sum\limits_{k = 1}^{n} M_k}'(z)| \geqslant \frac{1}{4} \left(4 \frac{\text{diam}(P)}{C_n \prod_{k=1}^{n} \alpha_k^{\frac{M_k}{K_k}}}\right) |F_{\sum\limits_{k = 1}^{n} M_k}'(z)|_{\varrho} C_n > \text{diam}(P).
$$
The second inequality is just \eqref{alfak}. The first inequality is the comparison $r \geqslant \delta_n$ combined with \eqref{hyper} and \eqref{eq:1}. Applying \eqref{hyper} is valid here, since without loss of generality we may assume the iterates $F_{\sum\limits_{k = 1}^{n} M_k}(z)$ leave $S_n$, otherwise we can consider a number $N_n (z) \in [M_n - K_n, M_n]$ such that $F_{N_n(z) + \sum\limits_{k = 1}^{n-1} M_k}(z)$ leaves $S_n$. Indeed, by construction of $M_k$ such a choice of $N_k(z)$ is always possible (for all $z$), and then we have the following inequality

\begin{align*}
    \frac{1}{4} r |F_{N_n + \sum\limits_{k = 1}^{n-1} M_k}'(z)| \geqslant \frac{1}{4} \left(4 \frac{\text{diam}(P)}{C_n \alpha_n^{\frac{N_n}{K_k}} \prod_{k=1}^{n-1} \alpha_k^{\frac{M_k}{K_k}}}\right) |F_{N_n(z) + \sum\limits_{k = 1}^{n-1} M_k}'(z)|_{\varrho} C_n \\ > \text{diam}(P).
\end{align*}

This yields the desired contradiction, since we have shown that if $r \geqslant \delta_n$, then $V_n$ contains a ball too large to fit in $P$.

Since $n$ was arbitrary, and we have picked the numbers $\delta_n$ to converge to zero, there can be no open disk around $z$ in $V$, and thus $V$ is not an open set which yields the contradiction.

Now let us calculate exactly how quickly does the example sequence $\lambda_n$ converge to $0$. To make any estimate we first need to set appropriate $\varepsilon_n , M_n , K_n , C_n, \alpha_n , \delta_n$. Note that the hyperbolic metric $\varrho$ is given by the density $\varrho (z) = \frac{1}{\sin(\Im(z))}$. Indeed, since the exponential function sends the strip $\{ 0 <  \Im(z) < \pi \}$ onto the upper halfplane, which has the hyperbolic metric $\varrho_H (z) = \frac{1}{\Im(z)}$, this yields: \\
$$\varrho(z) = \varrho_H(\exp(z)) |\exp ' (z)| = \frac{|\exp ' (z)|}{\Im (\exp(z))} = \frac{1}{\sin(\Im (z))}$$. \\

Now let us take $\varepsilon_n = \frac{1}{\sqrt n}$. In that case for $z \in S_n$ we have \\
\begin{dmath*}
    \Re(e^{z -1} (1 + \frac{1}{n})) = e^{\Re(z)-1} \cos(\Im(z)) (1 + \frac{1}{n}) > \Re (z) (1 - \frac{\Im(z)^2}{2})(1 + \frac{1}{n}) > \Re(z)(1 - \frac{1}{2n})(1+\frac{1}{n}) > \Re(z)(1 + \frac{1}{3n}) > \Re(z) + \frac{1}{6n}.
\end{dmath*} 
Since the length of the strip $S_n$ is finite, this means that for some constant $C$ after $Cn$ steps the iterates of $z$ leave $S_n$. In fact since $S_n$ has length $1$, we can pick $K_n = 6 n$. \\ We move on to the choice of $\alpha_n$. Let $\varrho_{H}$ be the hyperbolic metric on the upper halfplane and $\varrho_S$ be the hyperbolic metric on the strip from $0$ to $\pi$ (note that $\varrho = \varrho_S$, in this paragraph we shall use the notation $\varrho_S$ to emphasize that we are dealing with hyperbolic metrics on $2$ different domains $H$ and $S$). Since the exponential function is an isometry between these two metrics, we have $|f_{\lambda}(z)'| \frac{\varrho_{H}(f(z))}{\varrho_S (z)} = 1$. This lets us calculate the hyperbolic derivative of $f_{\lambda}$ in the strip, since we get:
\begin{multline*}
|f_{\lambda}(z)'|_{\varrho} = |f_{\lambda}(z)'| \frac{\varrho_S (f(z))}{\varrho_S (z)} = |f_{\lambda}(z)'| \frac{\varrho_H (f(z))}{\varrho_S (z)} \frac{\varrho_S (f(z))}{\varrho_H (f(z))} = \frac{\varrho_S (f(z))}{\varrho_H (f(z))} \\ = \frac{\Im(f(z))}{\sin(\Im(f(z)))}.
\end{multline*}

Now we can write (for sufficiently large $n$)
\begin{equation*}
\begin{split}
\inf\limits_{f(z) \in P \setminus S_n} |e^{z-1} (1+\frac{1}{n})| |\frac{\varrho_S (f(z))}{\varrho_S (z)}| &= \inf\limits_{f(z) \in P \setminus S_n} |\frac{\Im(f(z))}{\sin(\Im (f(z)))}| \\  &> \frac{\varepsilon_n}{\varepsilon_n - \frac{\varepsilon_n^3}{7}} > \frac{1}{1 - \frac{1}{7n}} > 1+\frac{1}{7n}.
\end{split}
\end{equation*}
This means $\alpha_n = 1 + \frac{1}{7n}$ is a valid choice for the estimate in \eqref{alfa}. \\

Lastly let us note that since the hyperbolic metric is bounded by a constant times distance from the boundary of the set, we can write $C_n > \frac{C'}{\sqrt n}$ for some constant $C'$ independent of $n$. \\

If we now pick $M_n = 7 K_n + K_n=  48n$, we get
$$
    C^{''}\frac{\sqrt n}{\prod_{k=1}^{n} (1 + \frac{1}{7k})^7} > 4 \frac{\text{diam} P}{C_n \prod_{k=1}^{n} \alpha_k^{\frac{M_k}{K_k}}} 
$$
for some constant $C^{''}$ independent of $n$.
Note that the left hand side converges to $0$. Indeed, we have 
$$
\prod_{k=1}^{n} (1 + \frac{1}{7k})^7 > \prod_{k=1}^{n} (1 + \frac{1}{k}) = n + 1.
$$
This means we can choose $\delta_n$ such that $\delta_n > C^{''}\frac{\sqrt n}{\prod_{k=1}^{n} (1 + \frac{1}{7k})^7}$ while keeping $\lim\limits_{n \rightarrow \infty} \delta_n =  0$, and thus our choice of all the numbers $\varepsilon_n , M_n , K_n , C_n, \alpha_n , \delta_n$ is valid. 
  Thus we have found an explicit example sequence, in which $\lambda = \frac{1}{n}$ is each repeated $\lceil 48n \rceil$ times. Since $48 \sum_{k=1}^n k \approx n^2$, the rate of convergence is $\frac{1}{n^{\frac{1}{2}}}$.
\end{proof}

Actually, the above construction works without any modification if we take any $\lambda \geqslant \frac{1}{n}$ repeated $48n$ times, equality isn't necessary. Indeed, all that is needed is some bound for the expansion of the hyperbolic metric. Thus the proof yields the following corollary:

\begin{cor}
If $\lambda_n = \frac{1}{e} + \frac{1}{n^p}$ for $p < \frac{1}{2}$, then $J (\lambda_n) = \mathbb{C}$.
\end{cor}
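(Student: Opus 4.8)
The plan is to deduce the corollary from Proposition~\ref{corjed} together with the construction in the Proposition immediately above, whose proof — as the remark preceding the corollary observes — uses the block multipliers $\tfrac1e+\tfrac1n$ only through the lower bound they force on the real-part increment.

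First I would verify the hypotheses of Proposition~\ref{corjed}. The bounds $\tfrac1e<\lambda_n=\tfrac1e+n^{-p}\leqslant\tfrac1e+1<1$ are immediate, so $\tfrac1e\leqslant\lambda_n<M$ holds with $M=1$. To get $\lim_{n\to\infty}F_n(x)=\infty$ for every $x\in\mathbb R$, note that $f_{\lambda_n}(t)\geqslant e^{t-1}$ for real $t$ and that each $F_n$ is increasing on $\mathbb R$; hence it suffices to know that the orbit of $0$ escapes to $\infty$ from every starting index, which is precisely the fact recorded above for $\lambda_n=\tfrac1e+n^{-p}$ with $p<2$ (it rests on $\sum_k(\lambda_k-\tfrac1e)=\sum_k k^{-p}=\infty$, valid for $p\leqslant 1$ and for every tail, together with the $\asymp 1/n$ rate at which the orbit of $0$ under $e^{z-1}$ approaches the fixed point $1$). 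Indeed, for $x\geqslant 0$ one has $F_n(x)\geqslant F_n(0)\to\infty$, and for $x<0$ one first iterates $e^{z-1}$ on the real line until the orbit exceeds $\tfrac12$, then compares with a suitable tail orbit of $0$.

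By Proposition~\ref{corjed} it then remains to rule out an open set $U$ on which $F_n\to 1$ almost uniformly. Suppose such a $U$ exists. Exactly as in the proof of the Proposition above, replacing $U$ by a forward image of a small disk contained in it and passing to the corresponding tail of $(\lambda_n)$ — which affects neither the existence of such a $U$ nor whether $J(\lambda_n)=\mathbb C$ — we may assume $F_n(U)\subset P$ for all $n\geqslant 0$, with $P$, the strips $S_n$, and the remaining notation as in that proof. I would then group the indices into blocks, block $n$ being the $48n$ consecutive indices ending at $\sum_{i\leqslant n}48i=24n(n+1)$. Every index $k$ in block $n$ satisfies $k\leqslant 48n^2$, hence $\lambda_k=\tfrac1e+k^{-p}\geqslant\tfrac1e+48^{-p}n^{-2p}$, and since $p<\tfrac12$ we have $48^{-p}n^{-2p}\geqslant\tfrac1n$ for all large $n$ (equivalently $n^{1-2p}\geqslant 48^{p}$, which holds eventually as $1-2p>0$). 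Thus, after discarding the finitely many initial blocks (harmless, since the Julia set depends only on a tail of $(\lambda_n)$), block $n$ consists of $48n$ multipliers, each lying in $[\tfrac1e+\tfrac1n,\,\tfrac1e+1]$. This is exactly the setting of the construction in the Proposition above: the parameters $\varepsilon_n=n^{-1/2}$, $K_n=6n$, $\alpha_n=1+\tfrac1{7n}$, $C_n\gtrsim n^{-1/2}$ and $M_n=48n$ depend on the block index $n$ and on the box $P$ but not otherwise on the multipliers, and the one $\lambda$-dependent estimate used there — that the real part increases by at least $\tfrac1{6n}$ on $S_n$ under $f_{\lambda_k}$ — only improves when $\tfrac1e+\tfrac1n$ is replaced by a larger $\lambda_k\in[\tfrac1e+\tfrac1n,\tfrac1e+1]$. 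Hence the conclusions of that proof go through verbatim: $C_n\prod_{k\leqslant n}\alpha_k^{M_k/K_k}\to\infty$, one may pick $\delta_n\to 0$ with $\delta_n>4\,\text{diam}(P)/\bigl(C_n\prod_{k\leqslant n}\alpha_k^{M_k/K_k}\bigr)$, and the Koebe one-quarter theorem applied to $F_{\sum_{k\leqslant n}M_k}$ forces every disk $D(z,r)\subset U$ to have $r<\delta_n$ for all $n$, so $r=0$ — a contradiction. Therefore $J(\lambda_n)=\mathbb C$.

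The main obstacle — and essentially the only step requiring genuine care rather than bookkeeping — is justifying the phrase ``works without modification'': one must revisit the estimates of the Proposition above and confirm that each of the real-part increment, the exit time $K_n$, the hyperbolic expansion $\alpha_n$ and the distortion constant $C_n$ behaves monotonically (favourably) when the multiplier is enlarged within $[\tfrac1e+\tfrac1n,\tfrac1e+1]$, and that allowing $\lambda_k$ to vary inside a block and discarding the finitely many blocks on which $\lambda_k<\tfrac1e+\tfrac1n$ causes no damage. Verifying $F_n(x)\to\infty$ on all of $\mathbb R$ is a minor, routine point by comparison.
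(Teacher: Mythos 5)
Your argument is correct and takes essentially the same route as the paper: the paper deduces the corollary from the remark that the preceding construction only uses the lower bound $\lambda\geqslant\frac{1}{e}+\frac{1}{n}$ on each block of $48n$ multipliers (the exit time, hyperbolic expansion and distortion constants being independent of, or monotone in, $\lambda$), and your bookkeeping --- indices in block $n$ satisfy $k\leqslant 48n^{2}$, hence $k^{-p}\geqslant\frac{1}{n}$ for large $n$ when $p<\frac{1}{2}$, after which finitely many initial blocks are discarded and Proposition~\ref{corjed} is invoked --- is precisely that reduction. The only blemish is the claim $\frac{1}{e}+1<1$, which is false; all that is needed is an upper bound $\lambda_n<M$ for some finite $M$ (e.g.\ $M=2$), so nothing in the argument is affected.
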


We do not conclude whether the Julia set for $\lambda_n = \frac{1}{e} + \frac{1}{n^p}$ with $\frac{1}{2} < p < 2$ is the whole plane, but if not, then the behaviour on the Fatou set is vastly different from the autonomous case, or the case for $p > 2$. For instance, we know that the real line along with its preimages lies in the Julia set. Moreover any components of the Fatou set would have to converge to $1$, the fixed point for the autonomous system. In the autonomous case the iterations on the left half-plane keep getting closer to the real line, never leaving a certain cone. The following theorem points to a different behaviour for the non-autonomous case which we are considering.

\begin{theorem}\label{stozek}
Let $S_{\theta} = \{z \in \mathbb{C}: \arg(z) \in (\frac{\pi}{2} + \theta, \frac{3 \pi}{2} - \theta) \}$ where $\theta \in (0, \frac{\pi}{2})$ and let $f_{\lambda_n} = e^z - 1 + \frac{1}{n^p}$ where $p < 2$. Then for every $z \in S_{\theta}$ there exists $n$ such that $F_n (z) \in \mathbb{C} \setminus S_{\theta}$ 
\end{theorem}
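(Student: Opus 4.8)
Suppose for contradiction that $z_n:=F_n(z)\in S_\theta$ for every $n$, and write $z_n=x_n+iy_n$. Lying in $S_\theta$ means precisely $x_n<0$ and $|y_n|<(\cot\theta)|x_n|$, so $e^{x_n}<1$, and from $f_{\lambda_n}(w)=e^w-1+n^{-p}$ the orbit satisfies
\[
 x_{n+1}=e^{x_n}\cos y_n-1+n^{-p},\qquad y_{n+1}=e^{x_n}\sin y_n .
\]
The $y$-equation carries no perturbation, and $e^{x_n}<1$, $|\sin y_n|\le|y_n|$, so $|y_n|$ is eventually strictly decreasing with constant sign; hence we may assume $y_n>0$ for all $n$ — if $y_m=0$ the orbit is real from time $m$ on, but a real orbit of $w\mapsto e^w-1+n^{-p}$ is strictly increasing and, by the escape of real orbits already established for these sequences (cf.\ the discussion preceding the theorem), cannot remain bounded, so it leaves $S_\theta$. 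From $x_{n+1}\ge-e^{x_n}-1+n^{-p}\ge-2$ and the cone condition, $(z_n)$ is bounded. Next I would show the orbit converges to $0$ and that $\sum_n|x_n|=\infty$: from $y_{n+1}/y_n=e^{x_n}\sin(y_n)/y_n$, a positive limit $y_n\to\eta>0$ would force $e^{x_n}\to\eta/\sin\eta>1$, which is impossible, so $y_n\to0$; then with $L:=\limsup x_n\le0$ a subsequence $x_{n_k}\to L$ gives $x_{n_k+1}\to e^L-1$, hence $\limsup x_n\ge e^L-1$, forcing $L=0$, so $x_n\to0^-$; finally $y_{n+1}/y_n\le e^{-|x_n|}$, so if $\sum_n|x_n|<\infty$ then $y_n$ would converge to a positive limit, whence $\sum_n|x_n|=\infty$.

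So the orbit converges to $0$ from inside the cone, and near $0$ the recursion refines to
\[
 x_{n+1}-x_n=\tfrac12\Re(z_n^2)+n^{-p}+O(|z_n|^3)=\tfrac12\bigl(x_n^2-y_n^2\bigr)+n^{-p}+O(|z_n|^3),\qquad \frac{y_{n+1}}{y_n}=e^{x_n}\bigl(1+O(y_n^2)\bigr).
\]
The mechanism I would exploit is that the perturbation $n^{-p}$ overwhelms the parabolic term $\tfrac12(x_n^2-y_n^2)$, which is $O(1/n^2)$. Put $u_n:=1/|x_n|$ and $\sigma_n:=y_n/|x_n|\in[0,\cot\theta)$. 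First, an elementary observation valid for every $\theta$: if $|x_n|<\tfrac1K n^{-p}$ for a suitable constant $K$, then $\Re(z_{n+1})\ge-|x_n|-\tfrac12y_n^2+n^{-p}>0$ for $n$ large, i.e.\ the orbit would have left $S_\theta$; hence $u_n\le Kn^p$ for all large $n$. Second, as long as $\sigma_n$ stays below a fixed bound $\sigma_*<1$ (automatic when $\theta>\tfrac\pi4$, with $\sigma_*=\cot\theta$), dividing the $x$-increment by $|x_n||x_{n+1}|\le x_n^2$ (here $x_{n+1}\ge x_n$) yields, for $n$ large,
\[
 u_{n+1}-u_n\;\ge\;\tfrac12(1-\sigma_n^2)+\frac{u_n^2}{n^p}+O(|x_n|)\;\ge\;c_0+\frac{u_n^2}{n^p},\qquad c_0=c_0(\theta)>0 .
\]
This forces $u_n\gtrsim n$, then $u_n\gtrsim n^{3-p}$, and — iterating the exponent map $a\mapsto2a-p+1$, whose orbit from $a_0=1$ escapes to $+\infty$ because $p<2$ — it forces $u_n\gtrsim n^{a_k}$ for every $k$, so $u_n>Kn^p$ eventually, contradicting the ceiling. (For $1<p<2$ the contradiction can also be read off at the ceiling itself: $u_n\asymp n^p$ means $|x_n|\asymp n^{-p}$, so $\sum_n|x_n|<\infty$, against the first paragraph.)

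This settles the case $\theta>\tfrac\pi4$. For $\theta\le\tfrac\pi4$ the cone is fat and the orbit might, a priori, hug the part of $S_\theta$ near the imaginary axis, where $\sigma_n$ can exceed $1$ and the bootstrap step is unavailable; there I would instead work directly with the ratio
\[
 \frac{\sigma_{n+1}}{\sigma_n}=1+\frac{1}{|x_n|\,n^p}-\tfrac12|x_n|\bigl(1+\sigma_n^2\bigr)+(\text{lower order}),
\]
using the lower bound $|x_n|>n^{-p}$ (forced by $x_{n+1}<0$) and the monotonicity of $|y_n|$ to show that $\sigma_n$ cannot stay trapped in $[\sigma_*,\cot\theta]$: it is pushed above $\cot\theta$ (exit), or driven back below $\sigma_*$, returning us to the previous case. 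I expect this last point — a uniform control of the decay rate of $|x_n|$ across all $p<2$ and all admissible $\theta$ — to be the genuine obstacle. A convenient way to organize the whole estimate is to conjugate by the attracting Fatou coordinate $\Phi$ of $e^z-1$ at $0$ (so $\Phi\sim-2/z$, $\Phi\circ(e^z-1)=\Phi+1$): the perturbed dynamics becomes $W_{n+1}=W_n+1+\tfrac{W_n^2}{2n^p}(1+o(1))$ for $W_n=\Phi(z_n)$, the cone $S_\theta$ corresponds to $|\arg W_n|<\tfrac\pi2-\theta$, and since $|W_n|\gtrsim n$ the correction has modulus $\gtrsim n^{2-p}\to\infty$ and argument $\approx2\arg W_n$, so it inflates $\arg W_n$ (or drives $\Re W_n$ below $0$) until $z_n$ has left $S_\theta$; the delicate point here is to bound the error in the Fatou-coordinate increment when $p<1$, where the single step $n^{-p}$ need not be small compared with $|z_n|$.
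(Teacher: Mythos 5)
Your argument, as you yourself flag, is only complete on the narrow cones: the bootstrap $u_{n+1}-u_n\geqslant c_0+u_n^2/n^p$ needs $\sigma_n\leqslant\sigma_*<1$, i.e.\ $\cot\theta<1$, so everything after the first paragraph proves the statement only for $\theta>\tfrac{\pi}{4}$, while the theorem is claimed for all $\theta\in(0,\tfrac{\pi}{2})$. For $\theta\leqslant\tfrac{\pi}{4}$ you offer two plans (the ratio $\sigma_{n+1}/\sigma_n$, and the Fatou-coordinate picture) but in both you explicitly leave the decisive estimate open (``the genuine obstacle'', ``the delicate point \dots when $p<1$''), so this is a genuine gap, not a finished proof. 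There is also a smaller soft spot in the part you do carry out: your limsup argument only gives $\limsup x_n=0$, whereas the bootstrap needs the full orbit to enter and stay near $0$ (otherwise the Taylor expansion and the inequality $u_n\gtrsim n^{a_k}$ are only available along a subsequence); for $\theta>\tfrac{\pi}{4}$ this can be repaired by checking $e^{x}\cos y-1-x\geqslant 0$ on the relevant region (so $x_n$ is nondecreasing), but that step should be made explicit.

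The paper closes exactly the point you got stuck on by a modulus argument that is uniform in $\theta$ and needs no case split. Lemma \ref{stozeklem}, proved via the change of variable $w=1/z$ (essentially your Fatou coordinate up to a factor), shows that for the unperturbed map the cone maps into itself near $\infty$ in the $w$-plane and $|w|$ increases by an amount between $\sin(\tfrac{\theta}{2})$ and $3$ per step, whence $|f^n(z)|\asymp 1/n$ on $S_\theta\cap B(0,r)$. The key observation that replaces all of your $\sigma_n$-bookkeeping is elementary: if $z\in S_\theta$ and $z+n^{-p}\in S_\theta$, then $|z+n^{-p}|<|z|$, because the cone lies in $\{\Re z<0\}$. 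So, under your standing assumption that the whole perturbed orbit stays in $S_\theta$, the perturbation can only help the upper bound, and $|F_n(z)|<M/n$ persists for every $\theta$. On the other hand each step decreases the modulus by at least a constant times $k^{-p}$, so over a block of length comparable to $n$ the modulus drops by $\gtrsim n^{1-p}$, which exceeds $M/n$ as soon as $p<2$ — forcing $|F_m(z)|<0$, a contradiction. This sidesteps both the wide-cone case and the error analysis for $p<1$ that worried you, because the perturbation is never expanded or tracked in the conjugated coordinate; it is only used monotonically, via the sign of $\Re z$ on $S_\theta$.
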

The above theorem is formulated in terms of the function $e^z - 1$ and not $e^{z-1}$. These are conjugated, we change the variables to have the fixed point at $0$ instead of $1$.

\begin{lemma}\label{stozeklem}
Let $f(z) = e^{z-1}$. Then $$\forall_{\theta \in (0, \frac{\pi}{2})} \exists_{r, C_1, C_2} \forall_{z \in S_{\theta} \cap B(0,r)} : |f(z)| < |z|$$ and $$C_1 < \liminf\limits_{n \rightarrow \infty} (|f^n(z)|n) \leqslant \limsup\limits_{n \rightarrow \infty} (|f^n(z)|n) < C_2$$ 
\end{lemma}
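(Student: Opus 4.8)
The plan is to linearize $f$ near its parabolic fixed point by passing to a Fatou--type coordinate, so that both assertions become elementary statements about a near--translation. Throughout I take $f(z)=e^{z}-1$, normalized so that the fixed point lies at the origin; then $f(z)=z+\tfrac12 z^{2}+\tfrac16 z^{3}+\cdots$, the attracting petal opens along the negative real axis, and $S_{\theta}$ points into the petal with an angular margin $\theta$ to spare.

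\emph{Change of coordinate.} Put $\zeta=\psi(z):=-2/z$. A standard computation for parabolic germs gives, on a punctured neighbourhood $\{|\zeta|>N\}$ of infinity,
\[
F(\zeta):=(\psi\circ f\circ\psi^{-1})(\zeta)=\zeta+1+g(\zeta),\qquad |g(\zeta)|\le\frac{K}{|\zeta|},
\]
for suitable constants $N,K$ (in fact $g(\zeta)=\tfrac{1}{3\zeta}+O(\zeta^{-2})$); enlarging $N$ we may assume $K/N\le\tfrac12$. Since $\zeta=\tfrac{2}{|z|}\,e^{i(\pi-\arg z)}$, the map $\psi$ carries $(S_{\theta}\cap B(0,r))\setminus\{0\}$ onto the truncated cone $W_{r}=\{\zeta:\ |\arg\zeta|<\tfrac{\pi}{2}-\theta,\ |\zeta|>2/r\}$ about the positive real axis, on which $\Re\zeta\ge|\zeta|\sin\theta>\tfrac2r\sin\theta$. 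Hence, shrinking $r$, I can force $\Re\zeta>R$ on $W_{r}$ for any prescribed $R$; fix $R>\max(N,K)$ and then fix $r$ accordingly.

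\emph{The two conclusions.} For $|f(z)|<|z|$: since $\psi$ inverts moduli up to the constant factor $2$, this is equivalent to $|F(\zeta)|>|\zeta|$ on $W_{r}$, and
\[
|F(\zeta)|^{2}-|\zeta|^{2}=2\Re\zeta+2\Re\!\bigl(\bar\zeta\, g(\zeta)\bigr)+\bigl|1+g(\zeta)\bigr|^{2}\ \ge\ 2\Re\zeta-2K\ \ge\ 2R-2K\ >\ 0 .
\]
For the asymptotics: $\Re F(\zeta)\ge\Re\zeta+1-K/|\zeta|\ge\Re\zeta+\tfrac12$, so the orbit $\zeta_{n}:=F^{n}(\zeta)$ of any point of $W_{r}$ obeys $\Re\zeta_{n}\ge R+n/2$; in particular it never leaves $\{|\zeta|>N\}$, so the normal form governs every iterate. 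From $|\zeta_{n}|\ge R+n/2$ we get $\sum_{k<n}|\zeta_{k}|^{-1}=O(\log n)$, and inserting this into $\zeta_{n}=\zeta+n+\sum_{k<n}g(\zeta_{k})$ yields $|\zeta_{n}|=n+O(\log n)$. Hence $|f^{n}(z)|=2/|\zeta_{n}|=\tfrac{2}{n}\bigl(1+o(1)\bigr)$, so $\lim_{n\to\infty}n\,|f^{n}(z)|=2$ for every $z\in S_{\theta}\cap B(0,r)$; taking $C_{1}=1$ and $C_{2}=3$ then gives the stated two--sided bound, uniformly in $z$ because the limit does not depend on the starting point.

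The step I expect to be the main obstacle is the bookkeeping in the first two parts: confirming that the remainder $g$ is genuinely $O(1/|\zeta|)$ uniformly on the \emph{unbounded} region $W_{r}$ (rather than merely on a bounded petal), and that the forward orbit of every point of $W_{r}$ stays inside the half--plane $\{\Re\zeta>R\}$ on which this holds — in other words, that after shrinking $r$ the region is dragged rigidly toward $+\infty$ by $F$, with real parts increasing by at least $\tfrac12$ at each step. Granting that, the closing $1/n$--rate estimate is the classical Leau--Fatou asymptotic and is routine.
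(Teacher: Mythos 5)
Your proof is correct, and it rests on the same underlying idea as the paper's: push the parabolic fixed point to infinity by an inversion, observe that the map becomes a near-translation there, and read off the two claims from how the orbit drifts. (You work with $e^{z}-1$ and balls around $0$, which is indeed the intended normalization of Lemma \ref{stozeklem} given the remark after Theorem \ref{stozek}.) The execution differs in the middle: the paper keeps the raw inversion $z\mapsto 1/z$ and argues geometrically — tangent line to the circle $|w|=|z|$, angle at least $\theta/2$ between $[z,g(z)]$ and that tangent — to get invariance of the image cone $S'_{\theta}$ together with per-step modulus gains bounded between $\sin(\theta/2)$ and $3$, which yields $\theta$-dependent constants $C_1,C_2$. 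You instead use the normalized Fatou coordinate $\zeta=-2/z$, trap the whole forward orbit in a right half-plane via the increment $\Re\zeta_{n+1}\ge\Re\zeta_n+\tfrac12$ (your induction is sound: $\Re\zeta_n\ge R+n/2$ keeps $|\zeta_n|>N\ge 2K$, so the expansion with the uniform bound $|g(\zeta)|\le K/|\zeta|$ — valid since $\zeta\bigl(F(\zeta)-\zeta-1\bigr)$ comes from a function holomorphic and vanishing at the puncture — applies at every step), and then sum the $O(1/|\zeta_k|)$ errors to obtain the exact Leau--Fatou rate $n|f^n(z)|\to 2$. This buys a strictly sharper conclusion than the paper's (a genuine limit, with $C_1,C_2$ absolute rather than depending on $\theta$, and locally uniform in $z$), at the mild cost of not recording the cone invariance that the paper's geometric argument produces along the way; since the lemma's statement only concerns $|f^n(z)|$, nothing is lost for the application.
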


\begin{proof}
Let us consider the function $f$ after a change of variables given by $\frac{1}{z}$. Then we have
$$
    g(z) = \frac{1}{f(\frac{1}{z})} = \frac{1}{e^{\frac{1}{z}-1}} = \frac{z-2}{1 + O(\frac{1}{z^2})} + \frac{4}{z + 2 + O(\frac{1}{z})}.
$$
The function $\frac{1}{z}$ is a bijection between $S_{\theta} \cap B(0,r)$ and $S'_{\theta} = S_{\theta} \cap (\mathbb{C} \setminus B(0,\frac{1}{r}))$. Let us take $z \in S'_{\theta}$, and let us assume that $Arg(z) = \frac{\pi}{2} + \theta$, since the proof for other $z$ follows from this case. Of course $z$ lies on the circle $B(0,|z|)$, let us denote the line tangent to $B(0,|z|)$ at point $z$ by $k$. The angle between the line segment $[z,z-2]$ and $k$, is also equal $\theta$. 
From the form of $g(z)$ we know that for sufficiently small $r$, the angle between the line segment $[z,g(z)]$ and $k$ is at least $\frac{\theta}{2}$. Indeed, we have

\begin{figure}[htp]
\centering
\includegraphics[width=10cm]{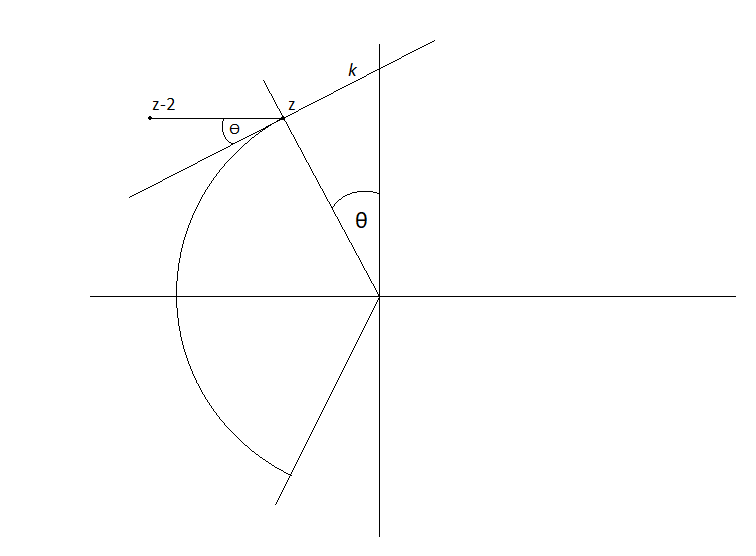}
\caption{}
\label{fig:drawing}
\end{figure}
$$
    \lim\limits_{z \rightarrow \infty} |g(z) - (z-2)| = 0.
$$
while
$$
     \text{dist}(z-2,k) = 2 \sin ( \theta ).
$$
This yields 
$$
    g(S'_{\theta}) \subset S'_{\theta} .
$$
and moreover, by taking a smaller $r$ if necessary, we can get
$$
    |z| + \sin (\frac{\theta}{2}) < |g(z)| < |z| + 3.
$$
Now since $g(S'_{\theta}) \subset S'_{\theta} $ we can also write the inequalities for all iterates
$$
    \forall_n |z| + 3n > | g^n (z)| > |z| + n \sin (\frac{\theta}{2}).
$$
which gives the desired result after returning to the initial coordinates. 

\end{proof}

\begin{proof}[Proof of theorem \ref{stozek}]
Let us pick the appropriate $r$ for $\theta$ so that the inequalities from Lemma \ref{stozeklem} are satisfied. We assume all the iterates of a point $z$ stay in $S_{\theta}$ and aim to arrive at a contradiction. Let us start by showing that if all iterates of $z$ stay in $S_{\theta}$, then for some $n$ large enough $F_n(z)$ must land in $B(0,r)$: 

Let us denote $S''_{\theta} = \overline{B(0,3)} \cap S_{\theta}$ and again let $f(z) = e^z - 1$. It is well known that the entire left halfplane is in the basin of attraction of $0$ for the function $f$, thus $f^n$ converge uniformly to $0$ on $S''_{\theta}$. This yields
$$
    \exists_{n_1} f^{n_1} (S''_{\theta}) \subset B(0, \frac{r}{2})
$$
which means that for a sufficiently large $n_2$ we have
$$
    f_{\lambda_{n_2 + n_1}} \circ f_{\lambda_{n_2 + n_1 - 1}} \circ ... \circ f_{\lambda_{n_2}} (S''_{\theta}) \subset B(0,r).
$$
since of course
$$
    \forall_{\varepsilon} \exists_{n_2} \forall_{n > n_2} \forall_{z \in S''_{\theta}} |f_{\lambda_n} (z) - f(z)| < \varepsilon.
$$
Note also that
$$
    \forall_n \forall_{z : \Re(z) < 0} |f_{\lambda_n} (z)| = |e^z - 1 + \frac{1}{n^p}| < |e^z| + 2 < 3 .
$$
which implies $\forall_n \forall_{z : \Re(z) < 0} f_{\lambda_n} (z) \in S''_{\theta}$ thus finally yielding $F_{n_1 + n_2} (z) \in B(0,r)$. \\

Knowing that the iterates land in $B(0,r)$ after $n_1 + n_2$ iterations will allow us to apply Lemma \ref{stozeklem}. We can now move on to showing that $F_n(z)$ has to leave $S_{\theta}$ at some point. We assume all iterates stay in $S_{\theta}$ and arrive at a contradiction by showing that the modulus of some iterates would have to be smaller than $0$. \\ Let us first note that we may assume the inequality $\limsup\limits_{n \rightarrow \infty} (|f^n(z)|n) < C_2$ from Lemma \ref{stozeklem} is true also for the compositions of $f_{\lambda_n}$ and not just for $f$. Indeed, this follows from the fact that $$(z \in S_{\theta}) \wedge (z + \frac{1}{n^p} \in S_{\theta}) \implies |z + \frac{1}{n^p}| < |z|,$$ i.e. if both $f(z)$ and $f(z) + \frac{1}{n^p}$ stay in $S_{\theta}$, then the latter has a smaller modulus. Thus assuming all iterates $F_n(z)$ stay in $S_{\theta}$ Lemma \ref{stozeklem} gives us 
$$
    \exists_{M > 0, N > 0} \forall_{n > N} \forall_{z \in S_{\theta}} |F_{n + n_1 + n_2}(z)| < \frac{M}{n}.
$$
We now have

\begin{dmath}
    \exists_{M> 0, C>0 } \forall_{m > M} \abs{F_{n_1 + n_2 + n + m}(z)} < \abs{F_{n_1 + n_2 + n}(z)} - C \sum\limits_{k = n_1 + n_2 + n}^{n_1 + n_2 + n + m} \frac{1}{k^p} < \abs{F_{n_1 + n_2 + n}(z)} - C_1 \frac{1}{n^{p-1}} < \frac{M}{n} - C_1 \frac{1}{n^{p-1}} < 0 .
\end{dmath}

The last inequality holds for sufficiently large $n$. This contradiction concludes the proof.

\end{proof}

\appendix
\section{Proof of Propositions \ref{cora} and \ref{corjed}}

For sake of completeness we provide the following proof of Proposition \ref{cora}, which is just a minor modification of the proof of Theorem 7 given in \cite{ZU} (this is Theorem \ref{zdunik} in this document). The proof of Proposition \ref{corjed} is analogous.

\begin{proof}[Proof of Proposition \ref{cora}]
We begin by noting the real line lies in the Julia set.
\begin{lemma}\label{lemre}
Let $\forall_n M > \lambda_n > \bar{\lambda} > 0$ for some constants $M, \bar{\lambda}$, and let $\forall_{x \in \mathbb{R}} \lim\limits_{n \rightarrow \infty} F_n(x) = \infty$. Then $\mathbb{R} \subset J(\lambda_n)$.
\end{lemma}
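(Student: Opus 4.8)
The plan is to show that for each real $x$, the forward iterates $F_n(x)$ eventually enter the interval $(1,\infty)$ on the real line, after which they increase monotonically to $+\infty$; once a point $z=x$ sits on the real line and its orbit escapes to infinity, normality must fail at $x$. Concretely, I would fix $x\in\mathbb R$ and argue: since each $f_{\lambda_n}(t)=\lambda_n e^t > \bar\lambda e^t$ for real $t$, the orbit is bounded below by the orbit under the autonomous map $t\mapsto\bar\lambda e^t$ (monotonicity of all the maps on $\mathbb R$ lets me compare iterates term by term). If the orbit under $\bar\lambda e^t$ itself escapes to $\infty$ we are done; otherwise it converges to the smaller fixed point of $\bar\lambda e^t$, which is some finite value, and in particular the orbit of $x$ under $(F_n)$ stays bounded below by a constant. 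Actually, rather than reprove escape on $\mathbb R$, I realize the hypothesis already gives us $\lim_n F_n(x)=\infty$ directly — so that work is unnecessary.

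Given $\lim_n F_n(x)=+\infty$ for every real $x$, the main step is to convert escape-to-infinity into non-normality. I would argue by contradiction: suppose $x\in\mathbb R$ lies in the Fatou set, so there is a neighbourhood $W$ of $x$ and a subsequence $F_{n_k}$ converging almost uniformly on $W$ to some limit $g$ (a holomorphic function or the constant $\infty$). Since $F_{n_k}(x)\to\infty$ by hypothesis, $g\equiv\infty$ on $W$, i.e.\ $F_{n_k}\to\infty$ uniformly on a neighbourhood of $x$; in fact I claim $F_n\to\infty$ uniformly near $x$ along the full sequence, because if some subsequence stayed bounded near $x$ it would have a finite holomorphic limit $h$, but $h(x)=\lim F_{n}(x)=\infty$ — contradiction. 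So WLOG $F_n\to\infty$ uniformly on a real interval $I=(x-\eta,x+\eta)$.

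Now I derive a contradiction with the hypothesis that no open set is carried almost-uniformly to a real constant by a subsequence, by producing exactly such a set. The idea: once $F_n$ is very large and real on $I$, the next map $f_{\lambda_{n+1}}(z)=\lambda_{n+1}e^z$ with $z$ having large positive real part has derivative $\lambda_{n+1}e^z$ of huge modulus, so $F_{n+1}(I)$ is a long curve wrapping around; by the standard exponential-expansion argument its image contains a round disk of radius growing without bound, hence eventually contains a horizontal strip $\{|\Im w|<\pi\}$ minus a bounded set — in particular it contains an honest open disk $D$ on which the \emph{next} composition $f_{\lambda_{m}}\circ\cdots$ behaves like iteration of $\lambda e^z$ near a point where the real part is, say, exactly $0$ or slightly negative, forcing a sub-neighbourhood whose further images shrink towards the real fixed interval. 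I would then track a nested sequence of such disks to extract, via a diagonal/normality argument on these shrinking images, an open set $U$ and a subsequence $F_{n_k}|_U$ converging almost uniformly to a real constant $a$ — this is exactly the excluded configuration, giving the contradiction and hence $x\in J(\lambda_n)$; since $x\in\mathbb R$ was arbitrary, $\mathbb R\subset J(\lambda_n)$.

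The main obstacle is the last step: rigorously producing the open set mapped to a real constant. Merely knowing $F_n(I)$ is huge does not by itself hand you a subdomain with a controlled (real, constant) limit — one must use the real-axis symmetry ($\overline{F_n(\bar z)}=F_n(z)$, since all $\lambda_n$ are real) together with the fact that the preimage of a small disk centred on a real point under $\lambda e^z$ again contains a small disk centred on a real point, to run an inductive pullback and keep the relevant disks anchored near the real axis; controlling that the radii do not degenerate to zero too fast (so the limiting set is genuinely open) is where the care is needed, and this is presumably exactly where the argument from \cite{ZU} does its real work, which is why the full details are deferred to the cited proof.
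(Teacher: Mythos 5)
Your first two paragraphs are fine and match the start of the paper's argument: if a real point $w$ were in the Fatou set, normality together with $F_n(w)\to\infty$ forces $F_n\to\infty$ uniformly on compact subsets of a neighbourhood (any subsequence with a finite holomorphic limit would contradict escape at the real points). The problem is everything after that. Your contradiction is aimed at ``the hypothesis that no open set is carried almost-uniformly to a real constant by a subsequence'' --- but that is a hypothesis of Proposition~\ref{cora}, not of Lemma~\ref{lemre}. The lemma only assumes $M>\lambda_n>\bar{\lambda}>0$ and escape to infinity on $\mathbb{R}$, and it is proved \emph{before} (and used inside) the proof of the proposition; you cannot borrow the proposition's extra assumption here. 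Moreover, even if that assumption were available, your third paragraph does not actually produce the required open set with a real constant limit: it is a heuristic about images ``wrapping around'' and nested disks, and your final paragraph concedes that the crucial non-degeneracy of the pulled-back disks is left to the cited reference. So the proof has a genuine hole exactly at the step that should deliver the contradiction.

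The gap is avoidable, because uniform escape already self-destructs in one step, which is what the paper does. Since $f_\lambda'=f_\lambda$, the chain rule gives $|F_n'(z)|=\prod_{k=1}^n|F_k(z)|$, so uniform escape on a ball $B(w,r)$ forces $|F_n'|\to\infty$ there; Bloch's theorem then shows $F_n(B(w,r))$ contains a disk of radius $2\pi$ for large $n$, hence contains points $F_n(z_n)$ with $\Im F_n(z_n)\in\pi+2\pi\mathbb{Z}$ and $|\Re F_n(z_n)|\to\infty$. Then $F_{n+1}(z_n)=\lambda_{n+1}e^{F_n(z_n)}\in(-\infty,0)$: if $\Re F_n(z_n)\to-\infty$ this value tends to $0$, and otherwise the next iterate $F_{n+2}(z_n)=\lambda_{n+2}e^{F_{n+1}(z_n)}$ tends to $0$; either way uniform convergence to $\infty$ on $B(w,r)$ is contradicted. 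No construction of an open set with a real constant limit, and no appeal to the excluded-limit hypothesis, is needed; your argument as written does not reach this (or any) contradiction from the lemma's actual hypotheses.
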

\begin{proof}
Let us assume a point $w \in \mathbb{R}$ is in the Fatou set. Then there exists an open set $V$ such that $w \in V$ and the family $(F_n |_{V})$ is normal. Since $F_n |_{\mathbb{R} \cap V} \rightarrow \infty$ as $n \rightarrow \infty$, we conclude $F_n$ converges to infinity uniformly on compact subsets of $V$. Now consider a ball $B(w,r) \subset V$, we have
$$(F_n)'|_{B(w,r)} \rightarrow \infty$$
uniformly as $n \rightarrow \infty$. Thus by Bloch's Theorem, the image $F_n(B(w,r))$ for sufficiently large $n$ contains a ball of radius $2 \pi $. This implies there exists a sequence of points $z_n \in B(w,r)$ such that 

$$\lim\limits_{n \rightarrow \infty}|\Re (F_n (z_n))| = \infty$$

and

$$\Im (F_n(z_n))  \in \pi + 2 \pi \mathbb{Z}.$$
Thus $F_{n+1} (z_n) \in (-\infty , 0)$, which contradicts the convergence $F_n|_{B(w,r)} \rightarrow \infty$, and concludes the proof.
\end{proof}

A straightforward consequence of this lemma is the following corollary:

\begin{cor}\label{rekopie}
If $V \subset \mathbb{C}$ is an open set and $V \cap J(\lambda_n) = \emptyset$, then $V \cap \mathbb{R} = \emptyset$. Furthermore,

$$(\bigcup\limits_{k \in \mathbb{Z}} \mathbb{R} + k \pi i) \cap \bigcup\limits_{n=0}^{\infty}F_n(V) = \emptyset $$
\end{cor}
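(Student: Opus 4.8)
The plan is to deduce the corollary from Lemma \ref{lemre} together with the elementary fact that every $f_{\lambda}$ maps each horizontal line $\mathbb{R}+k\pi i$ into $\mathbb{R}$.

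The first assertion is immediate: by Lemma \ref{lemre} we have $\mathbb{R}\subseteq J(\lambda_n)$, so any open set $V$ with $V\cap J(\lambda_n)=\emptyset$ is disjoint from $\mathbb{R}$.

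For the second assertion I would argue by contradiction, reducing everything to the real axis. Suppose $F_{n_0}(z_0)=u+k\pi i$ with $u\in\mathbb{R}$, for some $n_0\geq 0$, some $k\in\mathbb{Z}$ and some $z_0\in V$. Since $e^{u+k\pi i}=(-1)^{k}e^{u}\in\mathbb{R}$, the point $p:=F_{n_0+1}(z_0)=\lambda_{n_0+1}e^{F_{n_0}(z_0)}$ is real; put $n_1:=n_0+1\geq 1$. Pick a ball $B\ni z_0$ contained in $V$ small enough that $F_{n_1}|_{B}$ is injective; this is possible because every $f_{\lambda}$ has non-vanishing derivative, so $F_{n_1}$ is a local homeomorphism. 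Then $F_{n_1}\colon B\to W:=F_{n_1}(B)$ is biholomorphic, $W$ is an open neighbourhood of the real point $p$, and on $W$ one has $F_m^{n_1}|_{W}=F_{n_1+m}|_{B}\circ(F_{n_1}|_{B})^{-1}$. Since $\{F_m|_{B}\}_{m}$ is normal (as $B\subseteq V\subseteq \mathbb{C}\setminus J(\lambda_n)$), the tail family $\{F_m^{n_1}\}_{m\geq 0}$ is normal on $W$; in other words $W$ is an open subset of the Fatou set of the shifted sequence $(\lambda_{n_1+j})_{j\geq 1}$ that meets $\mathbb{R}$. But applying Lemma \ref{lemre} to that shifted sequence shows its Fatou set is disjoint from $\mathbb{R}$, a contradiction. (Taking $n_0=0$ and $k=0$, this argument also recovers the first assertion.)

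The one step that requires genuine work is checking that the shifted sequence satisfies the hypotheses of Lemma \ref{lemre}: the bounds $M>\lambda_{n_1+j}>\bar{\lambda}$ are inherited, but one must also verify $F_m^{n_1}(x)\to\infty$ for every real $x$. Here I would use that each $f_{\lambda}$ is increasing on $\mathbb{R}$, so $F_m\to\infty$ uniformly on bounded real intervals (the infimum of $F_m$ over $[a,b]$ equals $F_m(a)$, which tends to $\infty$ by hypothesis); the identity $F_{n_1+m}(x)=F_m^{n_1}(F_{n_1}(x))$ then yields the claim for all $x$ lying in the image $F_{n_1}(\mathbb{R})$, and the remaining values — a bounded subinterval of $(0,\infty)$, occurring only when $n_1\geq 2$ — are dealt with by a short analysis of the real dynamics of $t\mapsto\lambda e^{t}$ with $\lambda>\bar{\lambda}$: once such an orbit leaves a suitable compact set it grows monotonically to $\infty$, while the hypothesis forces every orbit to leave that set. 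This real-variable bookkeeping is the main obstacle; the rest is soft, being just a manipulation of normal families together with the open-mapping property of the exponential maps.
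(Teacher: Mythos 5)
Your overall route is the intended one: the first assertion is exactly Lemma \ref{lemre}, and for the second assertion you transport normality forward through the local biholomorphism $F_{n_1}$, use that $f_{\lambda}$ maps each line $\mathbb{R}+k\pi i$ into $\mathbb{R}$, and then invoke Lemma \ref{lemre} for the shifted sequence $(\lambda_{n_1+j})_j$. The paper gives no details (it calls the corollary a straightforward consequence of Lemma \ref{lemre}), and implicitly uses precisely this shift argument later, so structurally you and the paper agree.

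The genuine gap is at the step you yourself single out as ``the main obstacle'': the claim that the shifted sequence inherits the hypothesis $F^{n_1}_m(x)\to\infty$ for \emph{every} real $x$. Your monotonicity argument only covers $x$ in the image $F_{n_1}(\mathbb{R})$, i.e.\ points above the lower envelope $c_m:=\inf F_m(\mathbb{R})$, and the remaining interval is \emph{not} ``dealt with by a short analysis'': the assertion that the hypothesis forces every such orbit to leave the relevant compact set is false under the hypotheses of Lemma \ref{lemre} alone (which allow $\bar{\lambda}<\frac1e$). Concretely, take $\bar{\lambda}=\frac14$, $M=2$, $\lambda_1=\lambda_2=\frac32$ and $\lambda_j=\frac32 e^{-3/2}$ for $j\geqslant 3$. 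Then $c_1=0$ and $c_j=\frac32$ for all $j\geqslant 2$, and for every real $x$ the gap $d_j=F_j(x)-c_j>0$ satisfies $d_{j+1}=\lambda_{j+1}(e^{F_j(x)}-e^{c_j})\geqslant \lambda_{j+1}e^{c_j}d_j=\frac32 d_j$, so $F_j(x)\to\infty$ and all hypotheses of Lemma \ref{lemre} hold; yet the shift-$2$ orbit of the real point $0$ is the orbit of $0$ under the autonomous map $t\mapsto \frac32 e^{t-3/2}$, whose multiplier constant $\frac32 e^{-3/2}<\frac1e$ produces an attracting real fixed point $p\in(0,1)$, so $F^2_m(0)\to p\neq\infty$ and Lemma \ref{lemre} simply does not apply to that shifted sequence. (Consistently, this sequence violates the standing assumption of Proposition \ref{cora}: the tail has an open set on which $F_n\to p$, a real constant.) So your verification cannot be completed from the bounds plus time-$0$ escape alone; to close this step one must bring in the additional standing hypothesis of Proposition \ref{cora} (no subsequence converging almost uniformly to a real constant on any open set), or otherwise exclude the trapped-orbit case directly, rather than deduce the shifted escape property as you propose. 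As written, the proof is incomplete at exactly this point.
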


As the original authors note, the following lemma is actually due to Misiurewicz, and can be found in \cite{MM}.

\begin{lemma}\label{pochodna}
For every $z \in \mathbb{C}$ and every $n \geqslant 1$ we have 

$$|(F_n)'(z)| > |\Im F_n(z) |$$
\end{lemma}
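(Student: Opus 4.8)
The plan is to reduce everything to the single identity $f_\lambda'(w) = f_\lambda(w)$, which holds because $f_\lambda(w) = \lambda e^w$, and then to iterate it. First I would record that, since $F_n = f_{\lambda_n}\circ F_{n-1}$, the chain rule gives
$$
  F_n'(z) = f_{\lambda_n}'(F_{n-1}(z))\cdot F_{n-1}'(z) = F_n(z)\cdot F_{n-1}'(z),
$$
so that, inducting with $F_0 = \mathrm{id}$,
$$
  F_n'(z) = \prod_{k=1}^{n} F_k(z), \qquad\text{hence}\qquad |F_n'(z)| = \prod_{k=1}^{n} |F_k(z)| .
$$
In particular $|F_n'(z)| > 0$ for every $z$ and every $n$, since each $\lambda_k > 0$ and the exponential never vanishes; this positivity is what will let us push the inequality from non-strict to strict.

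Next I would extract the imaginary part one step at a time. Writing $w = F_{n-1}(z)$ we have $F_n(z) = \lambda_n e^{\Re(w)}\bigl(\cos(\Im(w)) + i\sin(\Im(w))\bigr)$, so $|F_n(z)| = \lambda_n e^{\Re(w)}$ and
$$
  \Im(F_n(z)) = |F_n(z)|\,\sin\bigl(\Im(F_{n-1}(z))\bigr).
$$
Combining this with the elementary bounds $|\sin t|\le 1$ and $|\sin t|\le |t|$ for real $t$, I would prove by induction on $n$ that $|F_n'(z)| \ge |\Im(F_n(z))|$ for all $z$. The base case $n=1$ is $|F_1'(z)| = |F_1(z)| \ge |F_1(z)|\,|\sin(\Im z)| = |\Im(F_1(z))|$. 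For the inductive step, the product formula, the one-step identity above, the inductive hypothesis, and $|\Im(F_n(z))| \ge |\sin(\Im(F_n(z)))|$ give
$$
  |F_{n+1}'(z)| = |F_{n+1}(z)|\,|F_n'(z)| \ge |F_{n+1}(z)|\,|\Im(F_n(z))| \ge |F_{n+1}(z)|\,|\sin(\Im(F_n(z)))| = |\Im(F_{n+1}(z))| .
$$

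The only real work is upgrading this to the strict inequality in the statement, and that is the step I would treat most carefully. In the inductive step the inequality $|\Im(F_n(z))| \ge |\sin(\Im(F_n(z)))|$ is an equality only when $\Im(F_n(z)) = 0$; but in that case $\Im(F_{n+1}(z)) = 0$ as well, while $|F_{n+1}'(z)| = |F_{n+1}(z)|\,|F_n'(z)| > 0$ by the positivity noted above, so the inequality is strict anyway. Hence strictness propagates, and $|F_n'(z)| > |\Im(F_n(z))|$ for all $n \ge 2$ (and for $n=1$ unless $\Im z \in \tfrac{\pi}{2} + \pi\mathbb{Z}$, where the two sides of the degenerate one-step estimate coincide — so if one wants the bound uniformly for $n\ge 1$ it is the form with ``$\ge$'' that is robust, which is all the later arguments actually use). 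The main obstacle, then, is not any computation but simply keeping track of where the slack in $|\sin t|\le |t|$ can vanish and observing that exactly there the right-hand side of the target inequality collapses to zero while the left-hand side cannot.
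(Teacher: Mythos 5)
Your proof is correct and follows essentially the same route as the paper's: both rest on the identity $F_n'(z)=\prod_{k=1}^{n}F_k(z)$ (i.e.\ $f_\lambda'=f_\lambda$) together with the one-step bound $|\Im f_\lambda(w)|\leqslant |f_\lambda(w)|\,|\Im w|$ coming from $|\sin t|\leqslant |t|$, the only difference being that the paper telescopes the product $\prod_{k}\frac{|\Im F_k|}{|\Im F_{k-1}|}$ where you run an induction (which also quietly avoids the division by zero when some $\Im F_k(z)=0$). Your caveat about strictness is well taken: the paper's own chain of inequalities only yields the non-strict bound, and indeed for $n=1$ and $\Im z\in\frac{\pi}{2}+\pi\mathbb{Z}$ equality holds, so the ``$>$'' in the statement should really be ``$\geqslant$'' (which is all the subsequent arguments use), while your observation that strictness does hold for $n\geqslant 2$ is a correct refinement.
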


\begin{proof}
We have $f_{\lambda} (z) = \lambda e^x (\cos (y) + i \sin(y))$. Since $|\sin(y)| \leqslant |y|$ we have $|\Im (f_{\lambda}(z))| \leqslant \lambda e^x |y| = |f_{\lambda}(z)||\Im (z)|$. So,

$$\frac{|\Im (f_{\lambda} (z))|}{|\Im(z)|} \leqslant |f_{\lambda} (z)|.$$
Therefore we have

\begin{dmath*}
   \Im(F_n(z)) = (\prod\limits_{k = 2}^{n} \frac{|\Im(F_k(z))|}{|\Im (F_{k-1}(z))|}) \cdot \Im (F_1 (z)) \\ \leqslant  (\prod\limits_{k=2}^{n}|F_k (z)|)|\Im (F_{1}(z))|  \\ \leqslant \prod\limits_{k=1}^{n}|F_k (z)| \\ = |(F_n)'(z)|
\end{dmath*}

\end{proof}

\begin{lemma}
If $V \subset \mathbb{C}$ is an open connected set and $V \subset \overline{V} \subset \mathbb{C} \setminus J(\lambda_n)$, then there exists an integer $N \geqslant 0$ such that for all $n \geqslant N,$

$$ F_n (V) \subset S:= \{ z \in \mathbb{C} : |\Im(z)| < \pi \} .$$
\end{lemma}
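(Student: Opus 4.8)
\emph{Step 1: reformulation.} By Corollary \ref{rekopie} the set $\bigcup_{m\ge 0}F_m(V)$ misses every line $\mathbb{R}+k\pi i$ ($k\in\mathbb{Z}$). Since $V$, and hence each $F_n(V)$, is connected, $F_n(V)$ lies in a single open strip
\[
  \Pi_{k}:=\{z\in\mathbb{C}:\ k\pi<\Im(z)<(k+1)\pi\},
\]
say $F_n(V)\subset\Pi_{k_n}$ for a uniquely determined integer $k_n$. As $S=\Pi_{-1}\cup\Pi_{0}\cup\mathbb{R}$, the inclusion $F_n(V)\subset S$ is exactly the statement $k_n\in\{-1,0\}$, so it suffices to prove $k_n\in\{-1,0\}$ for all large $n$.

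\emph{Step 2: normality, and no escape to $\infty$.} Because $\overline{V}\subset\mathbb{C}\setminus J(\lambda_n)$, the family $\{F_n|_V\}$ is normal. It has no subsequence converging locally uniformly to $\infty$: from the identity $|F_n'(z)|=\prod_{i=1}^{n}|F_i(z)|$ read off in the proof of Lemma \ref{pochodna}, if $F_{n_j}\to\infty$ uniformly on a closed disk $\overline{B}\subset V$ then also $F_{n_j}'\to\infty$ uniformly on $\overline{B}$, and Bloch's theorem would force $F_{n_j}(B)$ to contain disks of radius tending to $\infty$; but every disk of radius $>\pi/2$ meets some line $\mathbb{R}+k\pi i$, contradicting Step 1. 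Hence every subsequence of $\{F_n|_V\}$ sub-converges locally uniformly to a \emph{holomorphic} limit, so $\{F_n\}$ is locally bounded on $V$; in particular the integers $k_n$ are bounded. (If $V$ is unbounded the argument is run throughout on a fixed closed disk $\overline{B}\subset V$, normality being a local notion.)

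\emph{Step 3: extracting a bad limit.} Suppose the conclusion fails. As $\{k_n\}$ is bounded, some integer $k$ with $|k|\ge 1$ satisfies $k_{n_j}=k$ for a subsequence $(n_j)$; since each $f_{\lambda_n}$ commutes with complex conjugation ($\lambda_n$ being real) we may assume $k\ge 1$. Then $\Im(F_{n_j}(z))>\pi$ for every $z\in V$, so Lemma \ref{pochodna} gives $|F_{n_j}'(z)|>\pi$ on $V$; passing to a further subsequence, $F_{n_j}\to g$ locally uniformly with $g$ holomorphic, $|g'|\ge\pi$ on $V$ (hence $g$ non-constant) and $g(V)\subset\overline{\Pi_k}$, so in fact $g(V)\subset\Pi_k$ as $g(V)$ is open. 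By the argument principle $F_{n_j}(V)$ contains a fixed disk about $g(z_0)$ for all large $j$, whence (each $F_{n_j}(V)$ lying in the Fatou set) $g(V)$ lies in the Fatou set, and the Fatou component $\Omega\supseteq g(V)$, being connected and disjoint from the lines $\mathbb{R}+m\pi i$, satisfies $\Omega\subset\Pi_k$, $k\ge 1$. Iterating the same extraction on the shifted families $\{F_{n_j+m}|_V\}_j$ (diagonalising over $m$ and extracting $\lambda_{n_j+m}\to\mu_m\in[\bar\lambda,M]$) produces non-constant holomorphic limits $g_m$ with $g_0=g$, $g_{m+1}=\mu_{m+1}e^{g_m}$, and $g_m(V)\subset\Pi_{\kappa_m}$ for integers $\kappa_m$ with $\kappa_0=k$.

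\emph{Step 4: the crux.} It remains to reach a contradiction from this ``trapped'' non-constant limit orbit. One concrete constraint is already available: applying Bloch's theorem on $B(z,\operatorname{dist}(z,\partial\Omega))\subset\Omega$ and using that $F_n(\Omega)$ always sits in a strip of height $\pi$ gives $|F_n'(z)|\le\frac{\pi}{2c\,\operatorname{dist}(z,\partial\Omega)}$ (with $c$ Bloch's constant) for all $n$ and $z\in\Omega$; since $F_{n_j}(\Omega)\subset\Pi_k$ as well, Lemma \ref{pochodna} gives $|F_{n_j}'|>\pi$ on $\Omega$, forcing $\operatorname{dist}(z,\partial\Omega)<\tfrac1{2c}$ for every $z\in\Omega$, i.e.\ $\Omega$ is thin (if $\Omega$, or $V$, contained a disk of radius $\ge\tfrac1{2c}$ we would already be done). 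In the thin case one must exploit the recursion itself: $\arg(g_{m+1})\equiv\Im(g_m)\pmod{2\pi}$ pins $\Pi_{\kappa_{m+1}}$ in terms of $\kappa_m$, and analysing how $f_{\lambda}(z)=\lambda e^{z}$ with $\lambda\in[\bar\lambda,M]$ transports these strips—together with $F_n|_{\mathbb{R}}\to\infty$, $\mathbb{R}\subset J(\lambda_n)$ (Lemma \ref{lemre}), and, for Proposition \ref{cora}, the hypothesis that no subsequence of $F_n$ converges to a real constant on an open set—should exclude the orbit of $\Omega$ remaining above $\{\Im(z)=\pi\}$ indefinitely. This final step is the one I expect to be the main obstacle; everything before it is routine given Corollary \ref{rekopie} and Lemmas \ref{pochodna} and \ref{lemre}. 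Granting it, no such $k$ exists and $k_n\in\{-1,0\}$ for all large $n$, i.e.\ $F_n(V)\subset S$ eventually.
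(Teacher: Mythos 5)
There is a genuine gap, and you locate it yourself: Step 4 is not an argument but an acknowledged obstacle, and the limit-extraction machinery of Steps 2--3 is not what closes it. The missing idea is to apply Lemma \ref{pochodna} \emph{blockwise along the escape times} via the chain rule, rather than once. Suppose $F_n(V)$ fails to lie in $S$ for infinitely many $n$, say at times $n_1<n_2<\cdots$; by your Step 1, at these times $|\Im F_{n_j}(z)|>\pi$ for all $z\in V$. Write $F_{n_j}=G_j\circ\cdots\circ G_1$, where $G_i$ is the composition of the maps $f_{\lambda_m}$ with $n_{i-1}<m\leqslant n_i$. The proof of Lemma \ref{pochodna} uses only that each map has the form $\lambda e^z$, so it holds verbatim for every shifted composition $G_i$; since $G_i\bigl(F_{n_{i-1}}(z)\bigr)=F_{n_i}(z)$ has imaginary part of modulus greater than $\pi$, the chain rule gives $|(F_{n_j})'(z)|>\pi^{j}$ on $V$, hence $\limsup_n |(F_n)'|=\infty$ there. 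Bloch's theorem then forces $F_{n_j}(W)$, for a fixed disk $W\subset V$, to contain a ball of radius $2\pi$ for large $j$; any such ball meets one of the lines $\mathbb{R}+k\pi i$, contradicting Corollary \ref{rekopie}. So the escapes from $S$ happen only finitely often, which is the assertion of the lemma. This is exactly the paper's proof: no normality, no extraction of limits $g_m$, and no analysis of ``trapped'' limit orbits or Fatou components is needed.

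Two secondary points. In Step 2, the inference ``$F_{n_j}\to\infty$ uniformly on a closed disk implies $F_{n_j}'\to\infty$'' does not follow from $|F_n'(z)|=\prod_{i\leqslant n}|F_i(z)|$ along a mere subsequence: the intermediate factors $|F_i(z)|$ between the chosen times can be small, so the product need not blow up. (The analogous step in Lemma \ref{lemre} works because there the \emph{full} sequence tends to infinity, so all but finitely many factors exceed a constant larger than $1$.) In Step 3, ``the Fatou component $\Omega\supseteq g(V)$'' is ambiguous in the non-autonomous setting: $F_{n_j}(V)$ avoids the Julia set of the tail sequence $\{\lambda_m\}_{m>n_j}$, not of the original sequence, so speaking of a single Fatou component containing $g(V)$ and then iterating it is not justified as stated. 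Neither repair is necessary once the derivative-growth argument above replaces Steps 2--4.
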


\begin{proof}
By corollary \ref{rekopie}, for every $n \in \mathbb{N}$, either the set $F_n(V)$ is contained in $S$, or it is disjoint from $S$. If $F_n(V) \cap S = \emptyset$ for infinitely many integers $n$, then using lemma \ref{pochodna} and the Chain Rule we obtain

$$
    \limsup\limits_{n \rightarrow \infty} |(F_n)'|_{V} = \infty.
$$
This by Bloch's Theorem implies $F_n(V)$ contains a ball of radius $2 \pi$ for infinitely many $n$, which is a contradiction with corollary \ref{rekopie}. The contradiction concludes the proof.

\end{proof}

Write $S$ as

$$
S = S^{+} \cup S^{-} \cup \mathbb{R},
$$
where

$$
S^{+} := \{z \in \mathbb{C}: 0 < \Im (z) < \pi \}
$$

and

$$
S^{-} := \{z \in \mathbb{C}: - \pi < \Im(z) < 0  \}.
$$

For a given $f_{\lambda}$ denote by $g_{\lambda}$ the branch of the holomorphic inverse of $f_{\lambda}$ which maps $S^{+}$ to $S^{+}$. Let $\rho$ denote the hyperbolic metric on $S^{+}$.

\begin{lemma}
For every $\lambda \in [\bar{\lambda},M]$ and for all $z,w \in S^{+}$, we have that
\begin{equation}\label{eq1}
\rho(g_{\lambda}(z),g_{\lambda}(w)) \leqslant \rho (z,w).
\end{equation}
Also, for every compact subset $K \subset S^{+}$ there exists $\kappa \in (0,1)$ such that for any $\lambda \in [\bar{\lambda}, \infty)$ and for all $z,w \in K$, we have
\begin{equation}\label{eq2}
\rho(g_{\lambda}(z),g_{\lambda}(w)) \leqslant \kappa \rho (z,w).
\end{equation}
\end{lemma}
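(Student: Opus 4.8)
The plan is to reduce everything, for all $\lambda$ simultaneously, to a single $\lambda$-independent branch, and then apply the Schwarz--Pick lemma. First I would record the relevant geometry: $f_\lambda$ maps $S^+$ \emph{biholomorphically} onto the upper half-plane $\mathbb H^+$, because $e^z$ carries the strip $S^+=\{0<\Im z<\pi\}$ onto $\mathbb H^+$ and multiplication by $\lambda>0$ preserves $\mathbb H^+$. Hence the branch $g_\lambda$ is in fact the global inverse $\mathbb H^+\to S^+$, given by $g_\lambda(z)=\operatorname{Log}z-\ln\lambda$, where $\operatorname{Log}$ is the branch of the logarithm on $\mathbb H^+$ with imaginary part in $(0,\pi)$. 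Since $S^+\subset\mathbb H^+$ we may restrict, and since $w\mapsto w-\ln\lambda$ is a biholomorphic self-map of $S^+$, hence a $\rho$-isometry, we obtain
$$\rho\big(g_\lambda(z),g_\lambda(w)\big)=\rho\big(g_1(z),g_1(w)\big),\qquad g_1:=\operatorname{Log}\big|_{S^+}\colon S^+\to S^+,$$
for every $\lambda>0$ and all $z,w\in S^+$. So it suffices to prove both inequalities for the single map $g_1$, with the uniformity in $\lambda$ (in particular over $\lambda\in[\bar\lambda,\infty)$) coming for free. Inequality \eqref{eq1} is then immediate, since $g_1\colon S^+\to S^+$ is holomorphic: the Schwarz--Pick lemma gives $\rho(g_1(z),g_1(w))\le\rho(z,w)$.

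For \eqref{eq2} I would sharpen this to a strict contraction on compact sets. Factor $g_1=\operatorname{Log}\circ\iota$, with $\iota\colon S^+\hookrightarrow\mathbb H^+$ the inclusion. As $\operatorname{Log}\colon\mathbb H^+\to S^+$ is biholomorphic, hence a $\rho$-isometry, the pointwise hyperbolic Lipschitz constant of $g_1$ equals that of $\iota$, namely $\varphi(z):=\rho_{\mathbb H^+}(z)/\rho(z)$ (ratio of densities), and by strict monotonicity of the hyperbolic metric under the proper inclusion $S^+\subsetneq\mathbb H^+$ one has $\varphi(z)<1$ for all $z\in S^+$; moreover $\varphi$ is continuous. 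Now fix a compact $K\subset S^+$, let $R:=\operatorname{diam}_\rho K<\infty$ and $K':=\{w\in S^+:\rho(w,K)\le R\}$. Since $(S^+,\rho)$ is complete, $\rho(w,K)\to\infty$ as $w$ approaches $\partial S^+$, so $K'$ is a $\rho$-closed and $\rho$-bounded subset of the complete, locally compact space $(S^+,\rho)$, hence compact, and $\kappa:=\max_{K'}\varphi<1$. The $\rho$-geodesic $\gamma$ joining any $z,w\in K$ lies in $K'$, whence
$$\rho\big(g_1(z),g_1(w)\big)\le\int_\gamma\varphi\,d\rho\le\kappa\int_\gamma d\rho=\kappa\,\rho(z,w).$$
Combined with the first paragraph this gives \eqref{eq2} with $\kappa$ depending only on $K$.

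The main obstacle is the uniformity in \eqref{eq2}: the contraction factor $\varphi$ tends to $1$ on approach to the part of $\partial S^+$ lying on $\partial\mathbb H^+$ (the real axis), so one cannot bound $\varphi$ away from $1$ on all of $S^+$, and one must instead exploit that geodesics between points of a fixed compact set stay inside a fixed compact enlargement. The $\lambda$-uniformity, which at first glance looks like the delicate point, is by contrast a triviality once one observes that $g_\lambda$ differs from $g_1$ only by a horizontal translation, an isometry of $(S^+,\rho)$.
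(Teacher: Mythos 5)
Your proposal is correct and follows essentially the same route as the paper: both exploit that $g_\lambda$ differs from a fixed branch only by a real translation (an isometry of $\rho$) to get the uniformity in $\lambda$, and then derive \eqref{eq1} from Schwarz--Pick and \eqref{eq2} from strict Schwarz--Pick plus compactness. The only difference is in the last step: the paper gets $\kappa$ from the continuity of the two-point ratio, extended to the diagonal by the hyperbolic derivative, on the compact set $K\times K$, while you bound the hyperbolic derivative by the density ratio $\rho_{\mathbb{H}^+}/\rho<1$ on a compact enlargement $K'$ and integrate along geodesics; both implementations are valid.
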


\begin{proof}
The inequality (\ref{eq1}) is an immediate consequence of Schwarz Lemma. Since the map $g_{\lambda} : S^{+} \rightarrow S^{+}$ is not bi-holomorphic, it also follows from Schwarz lemma that
\begin{equation}\label{eq3}
    \rho(g_{\lambda}(z),g_{\lambda}(w)) < \rho (z,w),
\end{equation}
whenever $z,w \in S^{+}$ and $z \neq w$, and in addition,

\begin{equation}\label{eq4}
   \limsup\limits_{\substack{z,w \rightarrow \xi \\ z \neq w}} \frac{\rho (g_{\lambda}(z),g_{\lambda}(w))}{\rho (z,w)} < 1
\end{equation}
for every $\xi \in S^{+}$. In order to prove (\ref{eq2}), fix $\lambda_2 \geqslant \lambda_1 \geqslant \bar{\lambda}$. Since $g_{\lambda_2}(z) = g_{\lambda_1} (z) - \log (\frac{\lambda_2}{\lambda_1})$ and $g_{\lambda_2}(w) = g_{\lambda_1} (w) - \log (\frac{\lambda_2}{\lambda_1})$, and since the metric $\rho$ is invariant under horizontal translation, we have
$$
\rho(g_{\lambda_2}(z), g_{\lambda_2}(w)) = \rho(g_{\lambda_1}(z), g_{\lambda_2}(w)).
$$
Thus it is enough to check (\ref{eq2}) for $f_{\bar{\lambda}}$. But this follows immediately from (\ref{eq3}), (\ref{eq4}) and the compactness of $K$. Indeed, denote by $|f'|_{\rho}$ the derivative with respect to the metric $\rho$, and consider the function $G: K \times K \rightarrow \mathbb{R}$ defined by:
\begin{equation*}
    G(z,w) =
    \begin{cases}
      \frac{\rho (g_{\bar{\lambda}}(z),g_{\bar{\lambda}}(w))}{\rho (z,w)} & \mbox{for } z \neq w \\
      |f'|_{\rho} & \mbox{for } z = w
    \end{cases}
\end{equation*}
Then $G$ is continuous in $K \times K$ and $G(z,w) < 1$ for all $(z,w) \in K \times K$, and (\ref{eq2}) follows.
\end{proof}

The following lemma shall complete the proof of the theorem.

\begin{lemma}
The interior of the set 
$$
\Lambda := \bigcap\limits_{n = 0}^{\infty} (F_{n})^{-1}(S)
$$
is empty.
\end{lemma}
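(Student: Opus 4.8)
The plan is to argue by contradiction: suppose $U$ is a nonempty open set with $U\subseteq\Lambda$, and, shrinking, that $U$ is connected. First I would put $U$ in a normal form. Since $F_n(U)\subseteq S$ for every $n$, the family $\{F_n|_U\}$ omits the half‑plane $\{\Im z>\pi\}$, so it is normal by Montel's theorem and $U$ lies in the Fatou set; Lemma \ref{lemre} then gives $U\cap\mathbb R=\emptyset$, so by connectedness $U$ lies in $S^{+}$ or in $S^{-}$. Since $f_{\lambda}$ maps $S^{\pm}$ into $\pm\mathbb H$ and the $\lambda_n$ are real (so the configuration is symmetric under conjugation), I may assume $F_n(U)\subseteq S^{+}$ for every $n\ge 0$.

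Fix $z_0\in U$ and a closed disc $\overline D=\overline{D(z_0,r)}\subseteq U$, and let $\rho$ be the hyperbolic metric on $S^{+}$. By \eqref{eq1} the inverse branch $g_{\lambda}\colon S^{+}\to S^{+}$ does not increase $\rho$‑distance, equivalently $f_{\lambda}$ does not decrease $\rho$‑distance on pairs of points whose $f_{\lambda}$‑image lies in $S^{+}$; by \eqref{eq2} this expansion is by a factor $\ge\kappa_K^{-1}>1$ whenever the image lies in a fixed compact $K\subseteq S^{+}$. Writing $F_n(\overline D)=f_{\lambda_n}(F_{n-1}(\overline D))$ with both sets in $S^{+}$, we get that $\operatorname{diam}_{\rho}\bigl(F_n(\overline D)\bigr)$ is non‑decreasing and is multiplied by at least $\kappa_K^{-1}$ each time $F_n(\overline D)\subseteq K$. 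Hence if $F_n(\overline D)\subseteq K$ for infinitely many $n$ then $\operatorname{diam}_{\rho}(F_n(\overline D))\to\infty$, contradicting $\operatorname{diam}_{\rho}(F_n(\overline D))\le\operatorname{diam}_{\rho}(K)<\infty$ along that subsequence. So $F_n(\overline D)$ eventually leaves every compact subset of $S^{+}$; taking an exhaustion and diagonalising, there are $\zeta_n\in\overline D$ with $F_n(\zeta_n)$ leaving every compact of $S^{+}$, hence converging, along a subsequence, to some $\xi\in\partial S^{+}=\mathbb R\cup(\mathbb R+i\pi)\cup\{\infty\}$ in $\widehat{\mathbb C}$.

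Now take any locally uniform subsequential limit $\Phi$ of $\{F_n|_U\}$ in $\widehat{\mathbb C}$, chosen compatibly with the subsequence above and with $\zeta_n\to\zeta_*\in\overline D$; then $\Phi(\zeta_*)=\xi\in\partial S^{+}$. Since $\Phi$ is holomorphic with image in $\overline{S^{+}}$ (or $\Phi\equiv\infty$), the maximum principle applied to the harmonic function $\Im\Phi$ (or to $\pi-\Im\Phi$) forces $\Phi$ to be a constant $c\in\mathbb R\cup(\mathbb R+i\pi)\cup\{\infty\}$. Thus every subsequential limit of $\{F_n|_U\}$ is such a constant, and there are three cases. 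If some limit is $c\in\mathbb R$, this contradicts the hypothesis of Proposition \ref{cora} that no subsequence of $(F_n)$ converges to a real constant on an open set. If some limit is $a+i\pi$, pass to a further subsequence along which $\lambda_{n_j+1}\to\lambda_*\in[\bar\lambda,M]$; then $F_{n_j+1}=\lambda_{n_j+1}e^{F_{n_j}}\to-\lambda_*e^{a}$, a real constant, again contradicting that hypothesis. If every subsequential limit is $\infty$, then $F_n\to\infty$ locally uniformly on $U$, so $|F_n(z_0)|\to\infty$; iterating $(F_{k+1})'=F_{k+1}\cdot(F_k)'$ gives $|(F_n)'(z_0)|=\prod_{k=1}^{n}|F_k(z_0)|\to\infty$, so by Bloch's theorem $F_n\bigl(D(z_0,r)\bigr)$ contains discs of arbitrarily large radius, impossible inside the strip $S^{+}$. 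All cases are contradictory, so $\operatorname{int}\Lambda=\emptyset$. Finally this yields $J(\lambda_n)=\mathbb C$: a nonempty Fatou component $V$ satisfies $F_n(V)\subseteq S$ for all large $n$ by the preceding lemma, so $F_N(V)$ is a nonempty open subset of the analogue of $\Lambda$ for the tail sequence $(\lambda_{N+k})_{k\ge1}$ — which satisfies the same hypotheses — contradicting the lemma for that tail.

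I expect the main obstacle to be the escape‑to‑the‑boundary step, i.e.\ the hyperbolic‑diameter monotonicity argument, together with making the trichotomy genuinely exhaustive: one must be certain that the only possible subsequential limits are constants on the three pieces of $\partial S^{+}$, and in particular that an open set all of whose iterates run off to $\infty$ inside the strip cannot occur. The last point is the delicate one, but it is handled exactly as in Lemma \ref{lemre}, via the product formula for $(F_n)'$ and Bloch's theorem; the rest is bookkeeping with the contraction estimates \eqref{eq1}–\eqref{eq2} and the excluded‑limit hypothesis.
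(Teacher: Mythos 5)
Your argument is correct, but it reaches the conclusion by a genuinely different route than the paper. After the common reduction (normality by Montel, $\mathbb{R}\subset J$ by Lemma \ref{lemre}, hence WLOG $F_n(U)\subset S^{+}$ for all $n$), the paper proceeds via the Misiurewicz derivative bound of Lemma \ref{pochodna}, Bloch's theorem and a Baire category argument to produce an open $W^{*}$ whose forward images are eventually trapped in $\{0<\Im z<\tfrac{\pi}{2}\}$, and only then excludes the possible limits ($\infty$, points of $S^{+}$, non-constant maps, real constants). You skip the Baire/Misiurewicz step entirely: using \eqref{eq1}--\eqref{eq2} you show the hyperbolic diameter of $F_n(\overline D)$ is non-decreasing and strictly amplified whenever the image sits in a fixed compact $K\subset S^{+}$, so the images can lie in any fixed compact only finitely often; hence every locally uniform limit is a constant on $\mathbb{R}\cup(\mathbb{R}+i\pi)\cup\{\infty\}$, and you dispose of the three cases — real constants by the hypothesis of Proposition \ref{cora}, constants on $\mathbb{R}+i\pi$ by composing one more map along a subsequence with $\lambda_{n_j+1}\to\lambda_{*}$ to get a negative real constant limit (a case the paper never meets, because the Baire step confines orbits below height $\tfrac{\pi}{2}$), and $\infty$ by the product formula $|(F_n)'|=\prod_k|F_k|$ plus Bloch, just as in Lemma \ref{lemre}. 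Two remarks. First, your phrase ``every subsequential limit is such a constant'' is justified only after observing that the no-compact-containment property holds along the full sequence, so the $\zeta_n$-construction (or, more directly, the observation that a non-constant limit or a constant in $S^{+}$ would force $F_{n_k}(\overline D)$ into a fixed compact neighbourhood of $\Phi(\overline D)$ for all large $k$) can be rerun inside any subsequence; with that sentence added the trichotomy is airtight. Second, what each approach buys: yours is shorter and avoids Baire category and Lemma \ref{pochodna} altogether, but the paper's confinement step also yields the quantitative byproduct \eqref{wykgran} (iterates eventually lie in $\{0<\Im z<\theta\}\cap\{0<\Re z<M\}$), which is cited again later (in the proof of Theorem \ref{randSeq}); moreover your treatment of the $\mathbb{R}+i\pi$ case uses the full strength of Proposition \ref{cora}'s hypothesis (all real constants excluded), so it would not transfer verbatim to Proposition \ref{corjed}, where only the constant $1$ is excluded and the paper instead relies on its confinement plus the push-to-the-right lemma.
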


\begin{proof}
Since for any $\lambda$ we have $$f_{\lambda} (S^{+}) = \{z \in \mathbb{C}: \Im(z) > 0 \},$$ $$f_{\lambda}(S^{-}) = \{z \in \mathbb{C}: \Im (z) < 0 \}$$
and 
$$
f_{\lambda}(\mathbb{R}) = (0, \infty)
$$
it follows that
$$
\bigcap\limits_{n = 0}^{\infty} (F_{n})^{-1}(S) = \bigcap\limits_{n = 0}^{\infty} (F_{n})^{-1}(S^{+}) \cup \bigcap\limits_{n = 0}^{\infty} (F_{n})^{-1}(S^{-}) \cup \mathbb{R}.
$$
We shall prove that $ \bigcap\limits_{n = 0}^{\infty} (F_{n})^{-1}(S^{+})$ has empty interior. The case of $S^{-}$ can be done in an analogous way. \\
Let us assume the opposite, that is suppose there exists $V \subset \mathbb{C}$, a nonempty, open, connected and bounded set with
$$
V \subset \overline{V} \subset  \bigcap\limits_{n = 0}^{\infty} (F_{n})^{-1}(S^{+}).
$$
Then of course the family $(F_n |_{V})_{n = 0}^{\infty}$ is normal. Now let us fix a disk $W$ contained with its closure in $V$. Put $\delta := $ dist$(W, \partial V) > 0.$
Let $N$ be an integer large enough so that 
$$
\left(\frac{\pi}{2} \right)^N \cdot \frac{\delta}{72} > 2 \pi.
$$
Now, seeking a contradiction, assume there exists $\xi \in W$ such that for at least $N$ integers $n_1, n_2, ... , n_N \geqslant 0$ we have
$$
F_{n_i}(\xi) \in \{z \in \mathbb{C}: \Im (z) > \frac{\pi}{2} \}.
$$
Then $|(F_{n_N})'(\xi)| > \left( \frac{\pi}{2} \right)^N$, and again Bloch's Theorem implies that $F_{n_N}(W)$ contains some ball of radius $2 \pi$. Since $F_{n_N}(W)$ does not intersect the Julia set for the sequence $\{\lambda_i \}_{i = n_N}^{\infty}$, but the copies of the real line $\mathbb{R} + 2 \pi i \mathbb{Z}$ lie in this Julia set, so we arrive at a contradiction. \\
Thus we conclude that for any $z \in W$ the trajectory $F_n(z)$ visits $$\{z \in \mathbb{C}: \Im (z) > \frac{\pi}{2} \}$$ at most $N$ times. For every integer $k \geqslant 0$ let 
$$
W_k := W \cap \bigcap\limits_{n = k}^{\infty} (F_n)^{-1} (\{z \in \mathbb{C}: \Im (z) \leqslant \frac{\pi}{2} \}).
$$
Each set $W_k$ is closed in $W$, and as we have just proved
$$
W = \bigcup\limits_{k=0}^{\infty} W_k.
$$ Since $W$ is an open subset of $\mathbb{C}$ it is completely metrizable, and the Baire Category Theorem holds for it. Thus there exists $q_1 \geqslant 0$ such that 
$$
W^* := \text{Int}_{\mathbb{C}} (W_{q_1}) \neq \emptyset.
$$\\
This means that for all integers $n \geqslant q_1 \geqslant 0$, we have 
\begin{equation}\label{pozostaje}
    F_{n} (W^*) \subset \{z \in \mathbb{C}: 0 < \Im (z) < \frac{\pi}{2} \}.
\end{equation}
Consequently,
$$
F_n(W^*) \subset \{z \in \mathbb{C}: \Re (z) > 0 \}
$$
for all $n > q_1$. 
Finally note that there exists a constant $M$ (dependent on $\bar{\lambda}$) such that, if $\Re(z) > M$, $\Im(z) \in (0, \frac{\pi}{2})$, and $f_{\lambda_n} (z) \in S$ (for all $\lambda_n > \bar{\lambda} > 0$) then 

$$
\Re(f_{\lambda_n}) > \Re(z) + 1.
$$

We shall now finish the proof by excluding all possible limits of subsequences of $F_n$. Firstly, assume there is a subsequence $n_k$ such that $((F_{n_k}) |_{W^*})_{k = 1}^{\infty}$ converge to infinity. This implies that $((F_{n_k})' |_{W^*})_{k = 1}^{\infty}$ converge to infinity, which once again can be excluded by a combination of Bloch's Theorem and (\ref{pozostaje}). \\
There can also be no subseqence converging to a point in $S^+$, as all the maps $f_{\lambda_n}|_{W^*}$, $n > q_1$ expand the hyperbolic metric $\rho$.  \\
Thus let $g$ be a non-constant limit of some subsequence $(F_{n_k})_{k = 1}^{\infty}$ converging uniformly. Shrinking $W^*$ if necessary, one can assume $g(W^*)$ is contained in some compact subset $K \subset S^{+}$. Putting
$$
\widetilde{K} := \{z \in S^{+} : \rho(z,K) \leqslant 1 \},
$$
we see that there is $q_2 > q_1$ such that for every $k \geqslant q_2$

$$
F_{n_{k}}(W^*) \subset \widetilde{K}.
$$
Note that $\widetilde{K}$ has finite hyperbolic diameter, let us denote $D:= \text{diam}_{\rho} (\widetilde{K}) < \infty$. Let $z,w \in W^*$ with $z \neq w$. Then, using (\ref{eq1}) and (\ref{eq2}), we see that $\rho(z,w) \leqslant \kappa^{k-q_2} D$ for every $k \geqslant q_2$, which is a contradiction. Thus there can also be no subsequences with non-constant limits in $S^{+}$. \\
Since all limits of subsequences of $(F_{n})_{n=0}^{\infty}$ with values in $S^{+}$ have been excluded, the only possibility left is the convergence to a constant on the real line. In particular this gives the following valuable corollary:
For every $\theta >0$ there exists $n_{\theta} > 0$ such that for all $n > n_{\theta}$

\begin{equation}\label{wykgran}
F_{n} (W^*) \subset \{z \in \mathbb{C}: 0 < \Im (z) < \theta \} \cap \{z \in \mathbb{C}: 0 < \Re (z) < M \}.
\end{equation}

It is enough now to note that the above possibility is excluded since by the assumption of Proposition \ref{cora} there can be no subsequences with limits on the real line. The contradiction concludes the proof.

\end{proof}

\end{proof}

Proposition \ref{corjed} can be done analogously with the help of the following lemma (which is  Lemma 14 from \cite{ZU}):

\begin{lemma}
Let $\delta >0$ be small enough so that $1 - \delta > \frac{1}{\bar{\lambda} e}$. Then for every $\lambda \geqslant \bar{\lambda}$ and for every $z \in \mathbb{C}$ with $\cos(\Im (z)) > 1 - \delta$, we have that
$$
\Re (f_{\lambda}(z)) > \Re (z) + \bar{\lambda} e (1 - \delta).
$$
\end{lemma}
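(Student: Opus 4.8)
The plan is to strip off the complex structure immediately and reduce everything to an elementary one-variable inequality. Write $z = x + iy$ with $x = \Re z$, $y = \Im z$; then $f_\lambda(z) = \lambda e^{x}(\cos y + i\sin y)$, so that $\Re(f_\lambda(z)) = \lambda e^{x}\cos y$. The hypothesis $1 - \delta > \frac{1}{\bar\lambda e}$ is exactly the statement $\bar\lambda e(1-\delta) > 1$, and in particular it forces $1 - \delta > 0$; together with the assumption $\cos y > 1 - \delta$ this makes every factor occurring below strictly positive. Using $\lambda \geqslant \bar\lambda$ together with $\cos y > 1 - \delta$ we obtain
$$
\Re(f_\lambda(z)) \;=\; \lambda e^{x}\cos y \;\geqslant\; \bar\lambda(1-\delta)\,e^{x},
$$
so it suffices to establish
$$
\bar\lambda(1-\delta)\,e^{x} \;\geqslant\; x + \bar\lambda e(1-\delta)
$$
for every real $x$; the strict inequality claimed then follows from the fact that $\cos y > 1 - \delta$ is strict.

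Writing $c := \bar\lambda(1-\delta)$, so that $ce > 1$ by hypothesis, this target is the inequality $c\,e^{x} - x \geqslant ce$, a statement about the strictly convex function $x \mapsto c\,e^{x} - x$. I would dispatch it by elementary calculus — differentiating to locate the relevant range of $x$ and invoking standard lower bounds for $e^{x}$ (such as the tangent-line estimate $e^{x}\geqslant e\,x$) — the key structural point being that the additive constant which survives the estimate is precisely $\bar\lambda e(1-\delta)$, and that the inequality genuinely requires $ce > 1$, which is exactly why $\delta$ must obey the stated smallness condition. Since $\lambda$ has already disappeared after the first display, everything here is automatically uniform over $\lambda \in [\bar\lambda,\infty)$.

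The complex-analytic content of the lemma is therefore entirely absorbed into the single identity $\Re(f_\lambda(z)) = \lambda e^{x}\cos y$: no normality arguments, Bloch-type theorems, or hyperbolic-contraction estimates are needed. Consequently I expect the main (indeed essentially the only) point requiring care to be the bookkeeping in the final one-variable estimate — keeping the constant sharp and checking it on the full real line rather than just for large $x$, where the exponential term makes the inequality obvious.
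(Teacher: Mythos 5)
Your opening reduction is exactly the ``simple calculation'' the paper has in mind (the paper itself gives no proof, deferring to Lemma 14 of \cite{ZU}): writing $x=\Re z$, $y=\Im z$ and using $\lambda\geqslant\bar{\lambda}$, $\cos y>1-\delta$ to get $\Re(f_{\lambda}(z))=\lambda e^{x}\cos y>\bar{\lambda}(1-\delta)e^{x}$ is correct. The genuine gap is the one-variable inequality you then defer to ``elementary calculus'': with $c:=\bar{\lambda}(1-\delta)$, the claim $c\,e^{x}-x\geqslant ce$ for all real $x$ is false, for every value of $c$. The function $x\mapsto c\,e^{x}-x$ has its minimum at $x=-\log c$, with minimum value $1+\log c=\log(ce)$, and $\log(ce)<ce$ always (since $\log u\leqslant u-1<u$); concretely, at $x=1$ your target inequality reads $ce\geqslant 1+ce$. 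No tangent-line bound can rescue this, because the statement being proved is itself false as printed: take $z=1$, $\lambda=\bar{\lambda}$, so $\cos(\Im z)=1>1-\delta$; then $\Re(f_{\lambda}(z))-\Re(z)=\bar{\lambda}e-1$, which exceeds $\bar{\lambda}e(1-\delta)$ only if $\bar{\lambda}e\,\delta>1$, and this fails for all sufficiently small $\delta$ --- precisely the regime the lemma addresses.

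What your computation actually yields, and what Lemma 14 of \cite{ZU} asserts (and what the surrounding argument uses), is the estimate with the \emph{logarithm} of the displayed constant: $\Re(f_{\lambda}(z))>\Re(z)+\log\bigl(\bar{\lambda}e(1-\delta)\bigr)$, the additive constant being the minimum $1+\log\bigl(\bar{\lambda}(1-\delta)\bigr)$ computed above. The hypothesis $1-\delta>\frac{1}{\bar{\lambda}e}$ is exactly the condition making this constant positive, i.e.\ making $f_{\lambda}$ push the region $\{\cos(\Im z)>1-\delta\}$ to the right by a fixed amount, which is all that is needed for Proposition \ref{corjed}. So the missing step in your write-up is not a clever trick but the final check itself: carrying out the minimization would have shown both that your stated target fails and that the lemma's constant has lost a $\log$ in transcription; with the corrected constant your argument closes immediately and coincides with the intended proof.
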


As the authors of \cite{ZU} point out, the proof is just a simple calculation. It is worth noting that in \cite{ZU} the above lemma is used to conclude that for a certain $\varepsilon > 0$ the strip
$$
\{z \in \mathbb{C}: 0 < \Im (z) < \varepsilon \}
$$
is moved to the right by a fixed amount. Should we want to apply it to prove Proposition \ref{corjed}, we would like to say that $f_{\lambda}$ pushes to the right the set
$$
\{z \in \mathbb{C}: 0 < \Im (z) < \varepsilon \} \setminus D(1, \varepsilon_2)
$$
i.e. the same strip, but without a certain (small enough) neighbourhood of $1$. This way we can exclude the possibility of any constant limits on the real line, different from $1$. The possibility of convergence to $1$ is explicitly forbidden in the assumptions of Proposition \ref{corjed}. Thus both Propositions \ref{cora}, \ref{corjed} should really be seen as corollaries from the proof of Theorem 7 in \cite{ZU}.

\bibliographystyle{amsplain}

\end{document}